\DeclareMathOperator{\Sym}{Sym}
\newcommand\deq{\mathrel{\stackrel{\makebox[0pt]{\mbox{\normalfont\tiny def}}}{=}}}
\theoremstyle{plain}
\newtheorem{theorem}{Theorem}[section]
\newtheorem*{notheorem}{Theorem}
\newtheorem*{mainres}{Main theorem}
\newtheorem{lemma}[theorem]{Lemma}
\newtheorem{proposition}[theorem]{Proposition}
\newtheorem{corollary}[theorem]{Corollary}
\theoremstyle{remark}
\newtheorem{remark}{Remark}
\theoremstyle{definition}
\newtheorem{definition}[theorem]{Definition}
\newtheorem*{notation*}{Notation}
\lstdefinelanguage{GAP}{%
 morekeywords={%
 Assert,Info,IsBound,QUIT,%
 TryNextMethod,Unbind,and,break,%
 continue,do,elif,%
 else,end,false,fi,for,%
 function,if,in,local,%
 mod,not,od,or,%
 quit,rec,repeat,return,%
 then,true,until,while%
 },%
 sensitive,%
 morecomment=[l]\#,%
 morestring=[b]",%
 morestring=[b]',%
}[keywords,comments,strings]
\let \l \lambda
\newcommand{\listP}{\l = (\l_1,\l_2, \dots, \l_t)}
\newcommand{\missP}{\mu_1 < \mu_2 < \dots < \mu_m}
\newcommand{\Ml}{\mathcal M_\l}
\newcommand{\al}{\alpha}
\newcommand{\mup}[1]{\widetilde{\mathbb{U}}_{#1}}
\newcommand{\up}{\mathbb{U}}
\newcommand{\su}{\sum a_i = \sum \al_i}
\newcommand{\dist}{\mathbb D}
\newcommand{\mA}{\mathcal{A}}
\newcommand{\mB}{\mathcal{B}}
\newcommand{\mC}{\mathcal{C}}
\newcommand{\mD}{\mathcal{D}}
\newcommand{\mE}{\mathcal{E}}
\newcommand{\mF}{\mathcal{F}}
\newcommand{\dpok}{\mathbb{D}_{2(k+1)}^{\,\text {odd }}}
\newcommand{\him}{\mathbb{D}_{k+{(h-1)}/{2},h-3}}
\newcommand\xrowht[2][0]{\addstackgap[.5\dimexpr#2\relax]{\vphantom{#1}}}
\begin{document}
\title[]{The number of maximal unrefinable partitions} 
 \author{Riccardo Aragona$^1$\orcidlink{0000-0001-8834-4358}, Lorenzo Campioni$^1$, Roberto Civino$^1$\orcidlink{0000-0003-3672-8485}}
 
 \address{$^1$
 DISIM - Universit\`a degli Studi dell'Aquila, Italy}    

\email{riccardo.aragona@univaq.it, lorenzo.campioni1@univaq.it, roberto.civino@univaq.it}


\subjclass[2010]{11P81, 05A17, 05A19} \keywords{Unrefinable partitions, partitions into distinct parts, bijective proof.}

\begin{abstract}
This paper completes the classification of maximal unrefinable partitions, extending a previous work
of Aragona et al.\ devoted only to the case of triangular numbers. We show that the number of maximal 
unrefinable partitions of an integer coincides with the number of suitable partitions into distinct parts, 
depending on the distance from the successive triangular number.
\end{abstract}

\maketitle


\section{Introduction}
Let $N \in \mathbb N$. A partition of $N$ into distinct parts is called \emph{unrefinable} if none of its parts $x$ can be replaced by  integers
 whose sum is $x$ and which do not already belong to the partition. Since such definition naturally imposes a limitation on the size of the largest part, 
we call \emph{maximal unrefinable} partitions those where the largest part is maximal among those having the same sum. Aragona et al.~\cite{aragona2021maximal}
have recently shown that if $N$ is the triangular number $T_n = \binom{n+1}{2}$ and $\listP$ is an unrefinable partition of $N$, then $\l_t \leq 2n-4$
and that the bound is sharp.
Moreover, they calculated that the number of unrefinable partitions of $T_n$ attaining the bound, i.e., those where $\l_t=2n-4$, is as follows:
\begin{notheorem}[\cite{aragona2021maximal}]
Let $n \in \mathbb N$. The number of maximal unrefinable partitions of $T_n$ is one if $n$ is even and coincides with the number of partitions
 of $(n+1)/2$ into distinct parts if $n$ is odd.
\end{notheorem}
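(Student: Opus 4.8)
The plan is to reduce the count to a description of the extremal partitions (those attaining $\l_t=2n-4$), and then to match these with partitions into distinct parts by an explicit bijection, the parity of $n$ entering only through an arithmetic slack.

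The first ingredient is a workable reformulation of unrefinability. For a partition $\listP$ of $N$ into distinct parts, put $S=\{\l_1,\dots,\l_t\}$ and let $H=\{1,\dots,\l_t\}\setminus S$ be the set of missing integers below the largest part. A refinement move replaces a part $x$ by two distinct positive integers $a<b$ with $a+b=x$; for the outcome to be again a partition into distinct parts one needs exactly $a,b\notin S$, i.e.\ $a,b\in H$. Hence $\listP$ is unrefinable if and only if no part of $S$ is the sum of two distinct elements of $H$. From this one reads off at once that $\l_1\in\{1,2\}$, that up to the first large gap the parts of $S$ form an initial interval $1,2,\dots,k$, and --- crucially --- that once such a block $\{1,\dots,k\}\subseteq S$ is present no part $\le 2k+2$ can be refined, since two distinct elements of $H$ both exceed $k$ and therefore sum to at least $2k+3$.

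Next, assume $N=T_n$ and $\l_t=2n-4$, which is realised by the tightness of the bound recalled above. Write $S=\{1,\dots,k\}\cup B$ with $B$ the set of parts exceeding $k$. Then $\sum S=T_n$ becomes $\sum B=T_n-T_k$, a quantity that for the relevant range of $k$ is linear in $n$; combined with $\max B=2n-4$ and with the requirement that the elements of $B$ lying above $2k+2$ create no forbidden sum, this forces $k$ and confines the ``top'' $B$ of the partition to a bounded window near $2n-4$. An extremal partition is then determined by the choice of such a $B$ with prescribed maximum and prescribed sum, and I would encode that choice by a natural statistic --- for instance the multiset of gaps between consecutive elements of $B$, equivalently the lengths of the maximal runs of consecutive integers in $H$ --- and show that, after the obvious normalisation, these data range exactly over the partitions of $(n+1)/2$ into distinct parts, with the inverse map rebuilding $S$ block by block. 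When $n$ is even the value $\sum B=T_n-T_k$ is rigid and only one configuration survives, giving a single extremal partition; when $n$ is odd an extra unit of slack appears, and this is precisely what produces the family indexed by partitions of $(n+1)/2$ into distinct parts.

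The main obstacle is this rigidity: proving that an extremal partition must have its small parts forming a full initial interval and its large parts trapped in the narrow top window, thereby ruling out ``spread-out'' competitors with the same maximal part $2n-4$ but a different internal shape. This forces one to impose the condition ``no part is a sum of two distinct missing integers'' on all large parts simultaneously and to keep track of the resulting interactions; making the bookkeeping land on exactly $(n+1)/2$, and collapse to uniqueness in the even case, is the delicate part of the argument.
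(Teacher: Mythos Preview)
This paper does not itself prove the stated theorem; it is quoted from \cite{aragona2021maximal}, and the present paper proves the analogous non-triangular results (Theorems~\ref{thm:maindodd} and~\ref{thm_main2}) by essentially the same method, so your proposal should be measured against that. The structural ingredient you are missing --- and which resolves exactly the ``rigidity'' obstacle you flag at the end --- is the anti-symmetry of Remark~\ref{rmk2}: if $x\le\l_t-n-1$ is a missing part then $\l_t-x$ must be a part, else $x+(\l_t-x)=\l_t$ refines $\l$. This pairs each small hole with a specific large part and, combined with the missing-part bound of Lemma~\ref{lemma:bound}, is what forces the number of removals and replacements to satisfy $h\in\{j,j+1\}$ and confines the genuinely free choices to the narrow ``free area'' around $n$. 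Without this, your assertions ``this forces $k$'' and ``confines the top $B$ to a bounded window'' are unsupported: knowing that $\{1,\dots,k\}\subseteq S$ for the tautological $k$ does not by itself determine $k$ or rule out spread-out configurations, and in fact the actual extremal partitions have several isolated holes below $n$, not a single initial block followed by a top cluster.

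The bijection is also not the gap-encoding you propose. Once anti-symmetry fixes each large replacement as $\alpha_i=\l_t-a_i$, a maximal partition is determined by the list of its missing parts $\eta_1<\dots<\eta_s$ with $\eta_i\le n-3$, and the reflection $\phi:(\eta_1,\dots,\eta_s)\mapsto(n-2-\eta_s,\dots,n-2-\eta_1)$ lands in $\dist_{(n+1)/2}$; the target value $(n+1)/2$ drops out of the balance $\sum a_i=\sum\alpha_i$ after the anti-symmetric cancellation, not from an informal ``extra unit of slack''. Your proposed statistic --- gap-lengths between consecutive elements of $B$ --- need not produce distinct values, so there is no evident reason it would hit $\dist_{(n+1)/2}$ at all.
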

The aim of this work is to complete the classification of maximal unrefinable partitions, extending the previous result to the case of non-triangular numbers.
If $N \in \mathbb N$ is non-triangular, then it is uniquely determined by a pair $(n,d)$ where $n \in \mathbb N$ and $1 \leq d \leq n-1$ such that
$N=T_n-d$. We denote such an integer $N$ by $T_{n,d}$. Before introducing the main contribution of this paper related to the number of maximal
unrefinable partitions of $T_{n,d}$, let us specify the notation.
\subsection{Notation}
The sequence of positive integers $\listP$ is a \emph{partition of $N$ into distinct parts} if $\sum \l_i = N$, $\l_1 < \l_2 < \dots < \l_t$ and $t \geq 2$. We write $\l \vdash N$ and $|\l| = t$. The set $\dist_N$ denotes the set of all the partitions of $N$ into distinct parts,
while $\mathbb{D}_{N,s}$ denotes the set 
$\mathbb{D}_{N,s}=\left\{\l\in\mathbb{D}_{N}\mid \left|\l\right|=s \right\}$ of partitions of $N$  into $s$ distinct  parts (or of length $s$). 
 Moreover, $\mathbb{D}_{N}^{\,\text {odd }}$ denotes the subset of $\dist_N$ composed of partitions in which each part is odd.
If $\listP \in \dist$, the integers belonging to \[\mathcal M_\l \deq \{1,2,\dots, {\l_t}\} \setminus \{\l_1,\l_2,\dots, \l_t\}\] are called the \emph{missing parts} of $\lambda$, and are denoted by $\mu_1 < \mu_2 < \dots <\mu_m$, for some $m \geq 0$.


It is not hard to realize that if a partition is refinable, then its smallest refinable part has a refinement of the form $a +b$~\cite[Proposition~4]{aragona2021verification}. This justifies the following definition.
\begin{definition}\label{def_unref}
Let $\listP$ be a partition of $N$ into distinct parts and let $\mu_1 < \mu_2 < \dots <\mu_m$ be its missing parts.  The partition $\l$ is \emph{refinable} if there exist $1 \leq \ell \leq t$ and $1\leq i<j \leq m $  such that $\mu_i+\mu_j = \l_{\ell}$, and \emph{unrefinable} otherwise. The set of unrefinable partitions of $N$  is denoted by $\up_N$.
An unrefinable partition $\listP \in \up_N$ is called \emph{maximal} if  
\[
\l_t = \max_{(\l_1', \l_2', \dots, \l_t') \in \up_{N}} \l_t'.
\]
We denote by ${\mup{N}}$ the set of the maximal unrefinable partitions of $N$.
\end{definition}

If $A$ and $B$ are sets and $\phi \colon A \rightarrow B$ is a bijection, then we write $A \overset{\phi}{\leftrightarrow} B$ to mean that $\phi$ sends bijectively $A$ into $B$. We denote by $\#A$ the number of elements of $A$.

\subsection{Contributions}\label{sec:contri}
As already anticipated, the aim of this work is to construct and count maximal unrefinable partitions for non-triangular numbers $T_{n,d}$, for $n \in \mathbb N$ and $1 \leq d \leq n-1$.
For a fixed $n \in \mathbb N$ and for $1 \leq d \leq n-1$, we show a sharp upper bound for $\l_t$ when $\listP \in \up_{T_{n,d}}$. In particular, we show that, if $3 < d \leq n-1$, then 
\begin{equation}\label{eq:bounds}
\begin{cases}
\l_t \leq 2n-5 & \text{if } n-d \text{ is even},\\
\l_t \leq 2n-4 & \text{if } n-d \text{ is odd}.
\end{cases}
\end{equation}
Moreover, in the extremal cases $d \in \{1,2,3\}$, we obtain respectively:  $\l_t\leq 2n-2$, $\l_t\leq 2n-3$ and $\l_t\leq 2n-4$.
Once it is clear what \emph{maximal} means for an unrefinable partition of $T_{n,d}$, we can construct, classify and count all
the partitions of $\mup{T_{n,d}}$, by way of a bijective proof. As in the case of triangular numbers, maximal unrefinable partitions can be expressed in terms of suitable partitions
into distinct parts, for sufficiently large $n$. We will show the following result.
\begin{mainres}
Let $n \in \mathbb N$, $n \geq 11$, and $1\leq d \leq n-1 $. If $d >3$, the number of maximal unrefinable partitions of $T_{n,d}$ is
\[
\#\mup{T_{n,d}}=
\begin{cases}
\# {\mathbb{D}_{n-d+2}^{\,\text {odd }}}& \text{ if } n-d \text{ is even},\\
1+\#\dist_{(n-d+1)/2} & \text{ if } n-d \text{ is odd}.
\end{cases}
\]
Otherwise, for $d \in \{1,2\}$ we have $\#\mup{T_{n,d}}=1$ and for $d=3$
\[
\#\mup{T_{n,3}}=
\begin{cases}
\#\dist_{(n-2)/2} & \text{if } n \text{ is even},\\
1 & \text{if } n \text{ is odd}.
\end{cases}
\]
\end{mainres}
\subsection{Related works}
The concept of \emph{unrefinability} of a partition, formalized here in Definition~\ref{def_unref}, comes from a quite natural 
constraint among the parts, therefore the authors' belief is that the corresponding combinatorial object is intrinsically interesting. Nonetheless,
only few partial results are known on this topic
(cf.~the On-Line Encyclopedia of Integer Sequences for the first values of the sequence of unrefinable partitions~\cite[\url{https://oeis.org/A179009}]{OEIS}).
However, in addition to the purely theoretical interest, it has been shown that unrefinable partitions are surprisingly related to the generators of the 
$(n-1)$-th term in a chain of normalizers in the Sylow $2$-subgroups of $\Sym(2^n)$~\cite{aragona2021unrefinable}, while the generators
of the previous $n-2$ terms of the chain were linked to partitions into distinct parts in the classical sense~\cite{Aragona2019,aragona2021rigid}.

We have already explained why the size of the largest part, in this case, is an interesting statistic. First results on the classification of maximal unrefinable partitions have been obtained by Aragona et al.\ in the case of triangular numbers~\cite{aragona2021maximal}, combining a property of symmetry, which is similar to the one described later in this paper, and a bound in the minimal excludant~\cite{fraenkel2015harnessing}, which has been  investigated also recently by other authors~\cite{andrews2019,Hopkins2022}.
With similar techniques and by means of a polynomial-time algorithm for the generation of all the unrefinable partitions of a given integer used for simulations~\cite{aragona2021verification}, we complete here the classification of maximal unrefinable partitions extending previous constructions to the case of non-triangular numbers.

\subsection{Organization of the paper}
We show the bound of Eq.~\eqref{eq:bounds} and those related to the cases $d \in \{1,2,3\}$ in Sec.~\ref{sec:bounds}. The bounds are obtained constructively, i.e., we show actual partitions which attain the bounds. Such constructions are then extended in Sec.~\ref{sec:count} in a complete classification of maximal unrefinable partitions attaining the corresponding bounds. With similar arguments but slightly different computations, we  address the cases $\l_t \leq 2n-4$ and $\l_t \leq 2n-5$ in two separate subsections, i.e., respectively in Sec.~\ref{sec:2nminus4} and in Sec.~\ref{sec:2nminus5}. In particular,
the already mentioned counting result proved by a bijective argument can be read in Theorem~\ref{thm:maindodd} and in Theorem~\ref{thm_main2}. We draw our conclusions in Sec.~\ref{sec_final}.

\section{Construction of maximal unrefinable partitions}\label{sec:bounds}
From now on, let us assume $n \geq 11$. In this section we show that the bound for the largest part in an unrefinable partition for a non-triangular number depends on the parity of the distance from the index of the successive triangular number. To do this, we start from the convenient partition of Definition~\ref{def:unref}.
For each non-triangular integer $N > 3$, indeed, there exists an unrefinable partition of $N$ determined by the successive triangular number $T_n$ and by the distance $d$ of $N$ from it: 
\begin{definition}\label{def:unref}
Let $n \in \mathbb N$ and $1\leq d \leq n-1 $. We denote by $\pi_{n,d}$ the unrefinable partition of $T_{n,d}$ defined by 
\[
\pi_{n,d} \deq (1,2,\dots, d-1,d+1,\dots,n) \vdash T_{n,d}.
\]
\end{definition}
In order to construct other partitions of $T_{n,d}$, we will proceed as follows: starting from $\pi_{n,d} \in T_{n,d}$, we create a new partition $\l$  by removing from $\pi_{n,d}$ some of its parts, namely $a_1, a_2, \dots, a_h \in \{1,2,\dots,d-1,d+1,\dots,n\}$, and, at the same time, by adding to $\l$ new parts $\al_1, \al_2, \dots, \al_j \in \{d\,\}\cup\{
s \mid s \geq n+1\}$, for some positive integers $h$ and $j$. This leads to the creation of a partition $\l \vdash T_{n,d}$ if $j \leq h$ and
\[
\sum_{i=1}^{h} a_i = \sum_{i=1}^{j} \al_i.
\]
The obtained partition may be, in general,  refinable or not.
This notation will be used in the remainder of the paper and this strategy, in addition to further unrefinability checks, will lead to the classification of $T_{n,d}$.

Notice that when $\l$ is an unrefinable partition of $\mup{T_{n,d}}$, the missing part $d$ in $\pi_{n,d}$ can be either one of the \emph{replacements} $\al_i$s or not. 
We will show in Proposition~\ref{prop:maxlt} that, depending on this, we will obtain two different bounds.
The proof of this first result relies on the following known fact~\cite{aragona2021maximal}, which we reproduce here for completeness. 

\begin{lemma}[\cite{aragona2021maximal}]\label{lemma:bound}
Let $\listP$ be unrefinable and let $\missP$ be its missing parts. Then the number $m$ of the missing parts is bounded by
\begin{equation*}\label{bound1}
m \leq \left\lfloor \dfrac{\l_t}{2}\right\rfloor.
\end{equation*}
\end{lemma}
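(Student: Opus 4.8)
The plan is to argue by counting pairs of missing parts and matching them injectively to parts of $\lambda$. Suppose $\lambda = (\lambda_1, \dots, \lambda_t)$ is unrefinable with missing parts $\mu_1 < \mu_2 < \dots < \mu_m$. The key observation is that for the pairs of missing parts of the form $(\mu_1, \mu_j)$ with $j = 2, \dots, m$, the sums $\mu_1 + \mu_j$ are $m-1$ distinct integers, each strictly between $\mu_1 + \mu_2$ and $\lambda_t$ (since $\mu_j < \lambda_t$ forces $\mu_1 + \mu_j < 2\lambda_t$, but in fact one wants $\mu_1 + \mu_j \le \lambda_t$ to land inside the interval $\{1, \dots, \lambda_t\}$ — this needs a small argument, see below). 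By unrefinability, none of these sums equals any $\lambda_\ell$, so each such sum must itself be a missing part. This gives an injection from $\{\mu_2, \dots, \mu_m\}$ into the missing parts, which is not yet a contradiction, so the counting has to be set up more carefully.

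A cleaner route: consider instead, for each $i = 1, \dots, m$, whether $\mu_i$ is ``small'' and pair up $\mu_i$ with $\mu_1$. Actually the standard argument I would use is the following. Look at the $\lambda_t$ integers $1, 2, \dots, \lambda_t$; among them $m$ are missing and $t$ are present, with $m + t = \lambda_t$. Pair each integer $k \in \{1, \dots, \lambda_t\}$ with $\lambda_t - k$ (so the pairs are $\{k, \lambda_t - k\}$, with $\lambda_t/2$ self-paired if $\lambda_t$ is even). For each missing part $\mu_i < \lambda_t$, I claim its complement $\lambda_t - \mu_i$ cannot also be missing when $\mu_i \ne \lambda_t - \mu_i$: if both $\mu_i$ and $\lambda_t - \mu_i$ were missing, then $\mu_i + (\lambda_t - \mu_i) = \lambda_t = \lambda_t$, which exhibits $\lambda_t$ as a sum of two distinct missing parts, contradicting unrefinability. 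Hence in each complementary pair $\{k, \lambda_t - k\}$ at most one element is missing, and $\mu$ itself (i.e. the value $\lambda_t$, which is present) together with its ``complement'' $0 \notin \{1,\dots,\lambda_t\}$ is not an issue. The number of pairs $\{k, \lambda_t-k\}$ with $1 \le k \le \lambda_t - 1$ is $\lfloor (\lambda_t-1)/2 \rfloor$ when $\lambda_t$ is odd and $(\lambda_t - 2)/2$ genuine pairs plus one fixed point when $\lambda_t$ is even; in either case the count of missing parts among $\{1, \dots, \lambda_t - 1\}$ is at most $\lfloor \lambda_t / 2 \rfloor$. Since $\lambda_t$ itself is present, $m \le \lfloor \lambda_t/2 \rfloor$, as claimed.

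The main thing to get right — and the one genuine obstacle — is the edge case bookkeeping when $\lambda_t$ is even: the value $\lambda_t/2$ is a fixed point of the complementation pairing, and it may or may not be missing, but it only contributes $1$ either way, which is already accounted for in the floor. I would also double-check the trivial degenerate possibility that $m = 0$, where the bound holds vacuously, and note that $\mu_i + \mu_j$ for $i < j$ indeed lies in $\{1, \dots, \lambda_t\}$ is not needed in this complementation argument — only the single identity $\mu_i + (\lambda_t - \mu_i) = \lambda_t$ is used, which sidesteps the interval-containment worry entirely. So the proof reduces to: (i) in each complementary pair at most one is missing (by unrefinability applied to $\lambda_t$), (ii) count the pairs, (iii) observe $\lambda_t \notin \mathcal{M}_\lambda$. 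This is short and clean, and I expect the author's proof to follow essentially this line.
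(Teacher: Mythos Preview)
Your proposal is correct and, after the abandoned false start in the first paragraph, follows essentially the same complementation argument as the paper: for each missing part $\mu_i$ the element $\lambda_t-\mu_i$ must be a part (else $\mu_i+(\lambda_t-\mu_i)=\lambda_t$ refines $\lambda$), with the exceptional fixed point $\lambda_t/2$ when $\lambda_t$ is even, and counting the $\lfloor \lambda_t/2\rfloor$ complementary pairs gives the bound. The paper's proof is exactly this, phrased slightly more tersely.
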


\begin{proof}
Let us start by observing that $\l_t-\mu_i \in \l$ for $1 \leq i \leq m$, otherwise from $\l_t-\mu_i, \mu_i \in \Ml$, we obtain $ (\l_t-\mu_i)+\mu_i=\l_t\in \l$ and thus $\l$ is refinable.
Only the case $\mu_i = \l_t/2$ when $\l_t$ is even is exceptional.
We prove the claim by removing from the partition $(1,2,\dots,\l_t)$ the maximum number of parts different from $\l_t$. 
For the previous observation, each candidate part $\mu_i$ to be removed  has a counterpart $\l_t-\mu_i$ in the partition.
The bound in the statement depends on the fact that this process can be repeated no more than $\lfloor {\l_t}/{2}\rfloor$ times.
\end{proof}

\begin{proposition}\label{prop:maxlt}
Let $N=T_{n,d}$ with $n \in \mathbb N$ and $1\leq d \leq n-1$, and let $\listP\in\mup{N}$. Then 
\[
\l_t\leq 
\begin{cases}
2n-2 & d\in\l,\\
2n-4 & d\not\in\l.
\end{cases}
\]
\end{proposition}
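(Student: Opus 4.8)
The plan is to prove the bound for an arbitrary unrefinable partition $\listP$ of $N=T_{n,d}$; maximality plays no role. Set $L\deq\l_t$ and let $\mu_1<\dots<\mu_m$ be the missing parts, so that the parts of $\l$ are precisely the integers of $\{1,\dots,L\}$ not appearing among the $\mu_i$. Two facts drive the argument. First, comparing $\sum\l_i=T_{n,d}$ with $1+2+\dots+L$ gives
\[
\sum_{i=1}^{m}\mu_i=\binom{L+1}{2}-T_{n,d},
\]
so the total weight of the missing parts is a function of $L$ alone. Second, the computation in the proof of Lemma~\ref{lemma:bound} shows that $L-\mu_i\in\l$ for every $i$ with $\mu_i\neq L/2$; equivalently, $\Ml$ meets each pair $\{j,L-j\}$ with $1\le j<L/2$ in at most one point, and may additionally contain $L/2$ when $L$ is even.

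By the second fact, $\sum_i\mu_i$ is at most the sum of the larger elements $L-j$ over all such pairs, increased by $L/2$ when $L$ is even. Feeding the identity above and $T_{n,d}=\binom{n+1}{2}-d$ into this estimate and completing the square, I obtain $(L+3)^2\le(2n+1)^2+8(1-d)$ when $L$ is even and $(L+4)^2\le(2n+1)^2+8(2-d)$ when $L$ is odd. For $n\ge 11$ these force
\[
L\le 2n-2\text{ if }d=1,\qquad L\le 2n-3\text{ if }d=2,\qquad L\le 2n-4\text{ if }d\ge 3.
\]
In particular $L\le 2n-2$ for every $d$, which is the first case of the Proposition, and $L\le 2n-4$ whenever $d\ge 3$, which is the second case in that range.

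It remains to upgrade the bound to $2n-4$ when $d\in\{1,2\}$ and $d\notin\l$, i.e.\ when $d$ is itself one of the missing parts; here I would argue structurally. Suppose $d=1$, so $1\in\Ml$. For any other missing part $\mu$, unrefinability forces $1+\mu\notin\l$ (otherwise $1+\mu=\mu_i+\mu_j$ with $i<j$ would be a refinement), and since $1+\mu\le L$ and $L$ is a part, this forces $1+\mu\in\Ml$. Iterating, as soon as $\Ml$ contains an element $\ge 2$ we reach $L-1\in\Ml$ and then $1+(L-1)=L\in\l$, a refinement; hence $\Ml=\{1\}$, $\l=(2,3,\dots,L)$, and the sum identity gives $L=n\le 2n-4$. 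Suppose now $d=2$, so $2\in\Ml$ and, as already shown, $L\le 2n-3$. If $L=2n-3$, then a direct computation shows that the estimate $\sum_i\mu_i\le\sum_{k=n-1}^{2n-4}k$ holds with equality, so $\Ml$ must be exactly the extremal set $\{n-1,n,\dots,2n-4\}$; for $n\ge 11$ this does not contain $2$, contradicting $2\in\Ml$. Hence $L\le 2n-4$ here as well.

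The quadratic manipulation in the second paragraph is routine (it is only necessary to check that $n\ge 11$ gives enough slack). The delicate step is the last paragraph: one must observe that for $d\in\{1,2\}$ the requirement ``$d$ is a missing part'' is incompatible with the extremal configuration of $\Ml$ dictated by the sum identity, and it is precisely this incompatibility that improves the merely arithmetic bounds $2n-2$ and $2n-3$ to $2n-4$.
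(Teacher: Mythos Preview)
Your proof is correct, but it takes a genuinely different route from the paper's.

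The paper argues uniformly in $d$: it writes any $\l\in\up_{T_{n,d}}$ as $\pi_{n,d}$ with $h$ parts removed and $j$ parts added, observes that the number of missing parts equals $h+1+(\l_t-n-j)$ when $d\notin\l$ (and one less when $d\in\l$), applies Lemma~\ref{lemma:bound} to bound $m\le\lfloor\l_t/2\rfloor$, and then uses the crude inequality $j<h$ (respectively $j\le h$) coming from $\sum a_i=\sum\al_i$. This gives both cases in two lines, with no arithmetic beyond $\lceil\l_t/2\rceil\le n-2$ versus $\le n-1$.

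You instead bound the \emph{sum} of the missing parts rather than their number, using the same pairing $\{j,L-j\}$ that underlies Lemma~\ref{lemma:bound}, and compare with the exact identity $\sum\mu_i=\binom{L+1}{2}-T_{n,d}$. This buys you sharper conclusions along the way (you already see $L\le 2n-3$ for $d=2$ and $L\le 2n-4$ for all $d\ge 3$, anticipating Propositions~\ref{prop:d2}--\ref{prop:d3}), but the price is that the case $d\notin\l$ is not handled uniformly: for $d\in\{1,2\}$ you need the two ad~hoc structural arguments in your last paragraph. Both of those arguments are sound --- the iteration for $d=1$ correctly forces $\Ml=\{1\}$ and hence $\l=(2,\ldots,n)$, and for $d=2$ equality in your estimate does pin down $\Ml=\{n-1,\ldots,2n-4\}$ uniquely, which visibly omits $2$ --- but they are doing by hand what the paper's $j<h$ observation does in one stroke. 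The paper's approach is shorter and feeds directly into the remove/replace framework used throughout Section~\ref{sec:bounds}; yours is more self-contained and extracts slightly more information.
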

\begin{proof}
We will use, here and in the following proofs, the notation introduced after Definition~\ref{def:unref}.
Let us first assume that $d\not\in\l$, then the number of missing parts of $\l$ is $h+1+(\l_t-n-j)$. From Lemma~\ref{lemma:bound}, we have $h+1+\l_t-n-j\leq\lfloor\l_t/2\rfloor$ and so
\begin{align}
h&\leq \lfloor\l_t/2\rfloor - \l_t+n+j-1 \nonumber \\
&=n+j-1-\lceil\l_t/2\rceil. \label{eq:maxh}
\end{align}
If $d\not\in\l$, then we have $j < h$ and so, from Eq.~\eqref{eq:maxh}, $n-1-\lceil \l_t/2\rceil\geq 1$, i.e., $\l_t\leq 2n-4$.
If we assume $d\in\l$, from Eq.~\eqref{eq:maxh} and from $j\leq h$ we have $n-1-\lceil \l_t/2\rceil\geq 0$, i.e., $\l_t\leq 2n-2$.
\end{proof}

\begin{remark}\label{rmk1}
First notice that $1\leq a_1 <a_2<\cdots < a_h\le n$, $d<n$ and $n+1\leq \al_i \leq \l_t$ for each $1\leq i \leq j$ such that $\al_i\ne d$, hence, since $\su$, if $d\in\l$, then $j\leq h$, otherwise $j<h$. In particular if $\l_t\in\{2n-3,2n-2\}$, which is by Proposition~\ref{prop:maxlt} only possible when $d\in\l$, we have $j\leq h$. Moreover, from Eq.~\eqref{eq:maxh} we have $h\leq j+n-1-\lceil \l_t/2\rceil=j$. In other words, if $\l_t\in\{2n-3,2n-2\}$, then $h=j$. In the case when $\l_t=2n-4$, if $d\in\l$  we have $j\leq h$ and, from Eq.~\eqref{eq:maxh}, $h\leq j+1$, and so $h\in\{j,j+1\}$; instead if $d\not\in\l$, since $j<h$, we obtain $h=j+1$.
\end{remark}
\begin{remark}[Anti-symmetry]\label{rmk2}
Let  $\pi_{n,d} \neq \listP\vdash N$ be unrefinable. In this case $\l_t \geq n+1$. Notice that if an integer $x$ in $\{1,\dots, \l_t-n-1\}$ is such that $x\not\in \l$, then it corresponds to an element $x'=\l_t-x\in\{n+1,\dots,\l_t-1\}$ such that $x'\in\l$, otherwise $x+x'=\l_t$ and $\l$ is refinable.  Therefore, the parts of $\l$  can belong to three consecutive \emph{areas} of $\{1,2,\ldots,\l_t-1\}$, as shown in Fig~\ref{fig:areas}. We call
\begin{itemize}
\item the\textit{ first area} the set $\{s \in \mathbb N \mid 1 \leq s \leq \l_t-n-1\}$,
\item the\textit{ free area} the set $\{s \in \mathbb N \mid \l_t-n \leq s \leq n\}$,  
\item the\textit{ last area} the set $\{s \in \mathbb N \mid n+1 \leq s < \l_t\}$.
\end{itemize} 
Choosing elements in the first area implies fixing parts in the last one. For this reason, if we consider $\pi_{n,d}$ and if we obtain a new unrefinable partition $\l \vdash T_{n,d}$ from $\pi_{n,d}$ removing $a_1,a_2,\ldots,a_h\leq n$ and replacing them with $\al_1,\al_2,\ldots,\al_j$, then each $a_i$ in the first area determines $\l_t-a_i\in\{\al_i\}_{i=1}^{j-1}$. Accordingly, we denote the element $\l_t-a_i$ by $\al_i$. In particular $\l_t=\al_j$ and, when $d\in\l$, we denote $d$ by $\al_{j-1}$.

\begin{figure}[h!]
 \[
    \begin{tikzpicture}
    \draw (0,0) -- (8,0);
    \coordinate[label=below:$\color{white}{\dot{n}}\color{black}1$] (1) at (0,0);
    \coordinate[label=below:$\l_t-n-1$] (n-4) at (2.75,0);
    \coordinate[label=below:$ \color{white}{\dot{n}} \color{black}n $] (n) at (4,0);
    \coordinate[label=below:$\color{white}{1} \color{black} \l_t$] (2n-4) at (8,0);
    \fill (1) circle (2pt) (n-4) circle (2pt);
    \fill (n) circle (2pt) (2n-4) circle (2pt);
    \node[align=center] at (1.5,-1) {first \\ area};
    \node[align=center] at (3.4,-1) {free \\ area};
    \node[align=center] at (6,-1) {last \\ area};
    \end{tikzpicture}
    \]
        \caption{The three areas of the parts in an unrefinable partition}
    \label{fig:areas}
    \end{figure}
    
\end{remark}

By Proposition~\ref{prop:maxlt} we know that if $\l \in \mup{T_{n,d}}$, then $\l_t\leq 2n-2$. In the following sections, we will  distinguish all the possible cases for $\l_t$ and we will provide the corresponding constructions.
\subsection{The case $\l_t=2n-2$}
By virtue of Proposition~\ref{prop:maxlt} we know that if $\l \in \mup{T_{n,d}}$, then $d\in\l$ implies $\l_t \leq 2n-2$. Let us now show that the bound is attained sharply only for a single choice of $d$. 
\begin{proposition}\label{prop:d1}
Let $\listP \in \mup{T_{n,d}}$. If $\l_t=2n-2$, then $d=1$ and such a partition is unique.
\end{proposition}
\begin{proof}
From Proposition~\ref{prop:maxlt} we have that $\l_t=2n-2$ implies $d \in \l$ and by Remark~\ref{rmk1} we also know that $h=j \geq 2$. From the hypothesis $\l_t=2n-2$ we obtain that the free area corresponds to the set $\{n-2,n-1,n\}$. By Remark~\ref{rmk2} we have that $\al_{h-1}=d$ and $\al_h=2n-2$ are fixed.  Therefore, since $h=j$, the free area can contain two or three parts, but we must rule out the second option since it would violate unrefinability. We are then only left with the case of two parts chosen among $\{(n-2,n-1),(n-1,n)\}$. The case $(n-2,n)$ is not considered since $n-2+n=2n-2$ and $\l$ is unrefinable. Let us distinguish all the possible cases for $h$.

Let $h=2$. Since $\al_1$ and $\al_2$ are already fixed, we have that $a_1$ and $a_2$ are free elements. From $\su$ we have that either 
\begin{align*}
(n-2)+(n-1)&=d+2n-2 \quad \text{or}\\
(n-1)+n&=d+2n-2.
\end{align*}
From the first equation we obtain $d=-1$, a contradiction. From the second one we obtain $d=1$, as claimed. Indeed the obtained partition 
$\l=(1,2,\dots, n-2, 2n-2)$
is unrefinable since the sum of the first two missing parts $n-1$ and $n$ is larger than $\l_t=2n-2$.
Let us now prove that the remaining cases lead to contradictions.

Let $h=3$. From the hypothesis and from Remark~\ref{rmk2}, we have $\al_3=2n-2$, $\al_2=d$, $\al_1=2n-2-a_1$ is 
determined by the choice of $a_1$ and $(a_2,a_3) \in \{(n-2,n-1),(n-1,n)\}$.
Let us assume that $(a_2,a_3) =(n-1,n)$. Then, by $\su$, we obtain
\[
a_1+n-1+n = 2n-2-a_1+d+2n-2,
\]
from which 
\[
a_1 = \frac{2n-3+d}{2}.
\]
By checking if $a_1 \leq n-3$ as it should be, we determine a contradiction on $d$.
The other option for $(a_2,a_3)$ corresponds to a larger value for $a_1$, even more so a contradiction. 

Let $h\geq4$. We are assuming $\al_h=2n-2$, $\al_{h-1}=d$, $(a_{h-1},a_h) \in \{(n-2,n-1),(n-1,n)\}$ and 
$1 \leq a_1 < a_2 \dots < a_{h-2} \leq n-3$, which determine $\al_1,\al_2, \dots, \al_{h-2}$ as $\al_i = 2n-2-a_i$.
From $\su$ we obtain 
\[
a_1+a_2+\dots+a_{h-2} = \frac{(h-1)(2n-2)+d-(a_{h-1}+a_h)}{2}.
\]
Proceeding as in the previous case, we can choose to maximize $a_2+\dots +a_h$ by setting 
$a_{h-2}=n-3, a_{h-3}=n-4,  \dots, a_2=n-h+1$ and $a_{h-1} = n-1$, $a_h = n$. From this we obtain 
\[
a_1 = \frac{2n+h^2-3h-3+d}{2}.
\]
Checking $a_1<a_2 = n-h+1$, we obtain $h^2-h-5+d < 0$, which is impossible in the current setting where $d >0$ and $h \geq 4$. 
Notice that the choice of maximizing $a_2+\dots +a_h$ leads to the minimum value for $a_1$. Any other choice of $a_2,\dots,a_h$
would lead to a contradiction even more so. 
\end{proof}
Notice that in the previous proof only one construction was successful. Therefore, the following consequence is trivially obtained.
\begin{corollary}
$\#\mup{T_{n,1}}=1.$
\end{corollary}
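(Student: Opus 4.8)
The plan is to read the statement off from Propositions~\ref{prop:maxlt} and~\ref{prop:d1}. By Definition~\ref{def_unref}, $\mup{T_{n,1}}$ is the set of those $\l\in\up_{T_{n,1}}$ whose last part equals $L\deq\max\{\l'_t\mid \l'\in\up_{T_{n,1}}\}$, so it suffices to determine $L$ and then count the partitions realizing it.

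First I would show $L=2n-2$. Proposition~\ref{prop:maxlt}, applied with $d=1$ to any $\l\in\mup{T_{n,1}}$, gives $\l_t\le 2n-2$, hence $L\le 2n-2$; it also yields the sharper bound $\l_t\le 2n-4$ when $1\notin\l$. For the reverse inequality I would point to $\l^\star\deq(1,2,\dots,n-2,2n-2)$, the partition obtained inside the proof of Proposition~\ref{prop:d1} (case $h=2$) by deleting $n-1,n$ from $\pi_{n,1}$ and inserting $1,2n-2$; thus $\l^\star\vdash T_{n,1}$, and $\l^\star$ is unrefinable because its missing parts are exactly $n-1,n,n+1,\dots,2n-3$, whose two least members already sum to $2n-1>2n-2=\l^\star_t$, so no two missing parts can add up to a part. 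Consequently $L=2n-2$, and every $\l\in\mup{T_{n,1}}$ has $\l_t=2n-2$ (and, by the sharper bound above, $1\in\l$).

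It then remains to count the unrefinable partitions of $T_{n,1}$ with last part $2n-2$, which is precisely what Proposition~\ref{prop:d1} provides: there is exactly one, namely $\l^\star$. Since $\l^\star$ itself lies in $\mup{T_{n,1}}$, we conclude $\#\mup{T_{n,1}}=1$. I do not anticipate any genuine difficulty here: both the upper bound and the uniqueness are direct quotations of the preceding results, and the only thing to check by hand is that the value $2n-2$ is attained, i.e.\ that $\l^\star$ is unrefinable — a one-line verification that is already implicit in the proof of Proposition~\ref{prop:d1}, which is why the corollary is stated as trivially following.
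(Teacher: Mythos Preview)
Your proposal is correct and follows essentially the same approach as the paper: both derive the corollary directly from Proposition~\ref{prop:d1} (together with Proposition~\ref{prop:maxlt}), using that the single successful construction there, namely $\l^\star=(1,2,\dots,n-2,2n-2)$, witnesses $L=2n-2$ and is the unique maximal unrefinable partition of $T_{n,1}$. The paper merely states that the corollary is ``trivially obtained'' from the preceding proof, while you spell out explicitly why $L=2n-2$ is attained; this added explicitness is harmless and the argument is sound.
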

In the following sections, we will investigate the remaining possibilities for $\l_t$. Notice that we will mimic the arguments of Proposition~\ref{prop:d1}. As before,  given the value of $\l_t$,
we will determine the free area and the number of elements that can be chosen in the free area. Then we will attempt to construct partitions for each possible value of $h$.
In the general case, we will derive the conclusion starting from the choice which maximizes the sum of the values assigned to $a_2, a_3, \dots, a_h$, and minimizes $a_1$. We will use this strategy also in the following proofs, without further mention.

\subsection{The case $\l_t=2n-3$}
\begin{proposition}\label{prop:d2}
Let $\listP \in \mup{T_{n,d}}$. If $\l_t=2n-3$, then $d=2$ and such a partition is unique.
\end{proposition}
\begin{proof}
From the hypothesis and from Remark~\ref{rmk1},  we obtain that $d \in \l$ and $h=j$. 
The free parts are those belonging to $\{n-3, \dots, n\}$. We have already fixed two of the $\al_i$s and it is not possible to choose more that two parts 
in the free area without obtaining a contradiction on the unrefinability of $\l$. Therefore, we are left with the case of two free parts and $h-2$ parts in the first area to be determined.
Only four conditions on $(a_{h-1},a_h)$ do not contradict the unrefinability on $2n-3$, namely 
\[(a_{h-1},a_h) \in \{(n-3,n-2),(n-3,n-1),(n-2,n),(n-1,n)\}.\]
Let us distinguish the possible cases for $h$.

Let $h=2$. From $\su$ we obtain four equations
\begin{align*}
\left(n-3\right)+\left(n-2\right)=2n-5=d+2n-3,\\
\left(n-3\right)+\left(n-1\right)=2n-4=d+2n-3,\\
\left(n-2\right)+n=2n-2=d+2n-3,\\
\left(n-1\right)+n=2n-1=d+2n-3.
\end{align*}
From the first two equations we obtain the contradiction of $d$ being a negative integer. From the third equation we obtain $d=1$, which means that the partition is not maximal (cf. Proposition~\ref{prop:d1}). From the last one we obtain $d=2$, as claimed.
Notice that the obtained partition $\l=(1,2,\dots,n-2,2n-3)$ is unrefinable since the sum of the least missing parts $n-1$ and $n$ is larger than $2n-3$.
In the remainder of the proof, we will show that the remaining cases lead to contradictions.

Let $h=3$. In the current setting we have $\al_2=d$, $\al_3=2n-3$, $(a_{2},a_3) \in \{(n-3,n-2),(n-3,n-1),(n-2,n),(n-1,n)\}$, $1 \leq a_1 \leq n-4$ and $\al_1=2n-3-a_1$.
Proceeding as usual, let us consider the case where $a_2+\dots+a_h$ is maximal, which corresponds to the choice $a_2=n-1$ and $a_3=n$.
From $\su$ we obtain
\[
a_1 = \frac{2n-5+d}{2},
\]
and checking if $a_1 \leq n-4$ we obtain a contradiction on $d$. 

Let $h\geq 4$. Maximizing $a_2+\dots +a_h$, i.e., setting $a_{h-2}= n-4, a_{h-3}=n-5,\dots, a_2=n-h$, $a_{h-1}=n-1$ and $a_h=n$, from $\su$ we have 
\[
a_1=\frac{2n+h^2-2h-8+d}{2}.
\]
Imposing $a_1 < a_2=n-h$ leads to a contradiction.
\end{proof}

\begin{corollary}
$\#\mup{T_{n,2}}=1.$
\end{corollary}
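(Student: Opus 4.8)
The plan is to obtain this as an immediate consequence of Propositions~\ref{prop:maxlt}, \ref{prop:d1} and \ref{prop:d2}, exactly in the way the corollary $\#\mup{T_{n,1}}=1$ followed from Proposition~\ref{prop:d1}. The first step is to pin down the precise maximal value of $\l_t$ over $\up_{T_{n,2}}$. By Proposition~\ref{prop:maxlt}, every $\l\in\mup{T_{n,2}}$ satisfies $\l_t\le 2n-2$ (with $\l_t\le 2n-4$ already when $2\notin\l$), so $\max_{\l\in\up_{T_{n,2}}}\l_t\le 2n-2$. If this maximum were $2n-2$, then the partitions attaining it would lie in $\mup{T_{n,2}}$ with $\l_t=2n-2$, and Proposition~\ref{prop:d1} would force $d=1$, contradicting $d=2$. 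Hence $\max_{\l\in\up_{T_{n,2}}}\l_t\le 2n-3$.

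The second step is to check that the value $2n-3$ is actually attained, so that $\mup{T_{n,2}}$ coincides with the set of unrefinable partitions of $T_{n,2}$ whose largest part equals $2n-3$. This is witnessed by the partition $\l=(1,2,\dots,n-2,2n-3)$ already produced in the proof of Proposition~\ref{prop:d2}: a direct computation gives $1+2+\dots+(n-2)+(2n-3)=T_n-2=T_{n,2}$, and $\l$ is unrefinable because its two smallest missing parts are $n-1$ and $n$, whose sum $2n-1$ exceeds $\l_t=2n-3$. Therefore $\max_{\l\in\up_{T_{n,2}}}\l_t=2n-3$, and $\mup{T_{n,2}}=\{\l\in\up_{T_{n,2}}\mid \l_t=2n-3\}$.

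Combining the two steps, any $\l\in\mup{T_{n,2}}$ has $\l_t=2n-3$, and Proposition~\ref{prop:d2} applied with $d=2$ says that such a $\l$ must equal $(1,2,\dots,n-2,2n-3)$; in particular it is unique, which gives $\#\mup{T_{n,2}}=1$. I do not expect a genuine obstacle here: the whole substantive content has been absorbed into Proposition~\ref{prop:d2}, and the only point deserving care is the attainability of the bound $2n-3$, i.e.\ ruling out the a priori possibility that all unrefinable partitions of $T_{n,2}$ have largest part strictly below $2n-3$ — which the explicit partition exhibited above settles.
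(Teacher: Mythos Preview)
Your argument is correct and follows the same route the paper intends: the corollary is stated without proof immediately after Proposition~\ref{prop:d2}, relying implicitly on exactly the two points you spell out (that Proposition~\ref{prop:d1} rules out $\l_t=2n-2$ for $d=2$, and that the explicit partition $(1,2,\dots,n-2,2n-3)$ constructed in the proof of Proposition~\ref{prop:d2} shows $2n-3$ is attained). Your write-up simply makes explicit what the paper leaves to the reader.
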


\subsection{The case $\l_t=2n-4$}
In this case, by Proposition~\ref{prop:maxlt}, we have to consider both cases $d\not\in\l$ and $d\in\l$. Let us start by showing that the first assumption gives only one contribution. 
\begin{proposition}\label{prop:2n-4dno0}
Let $\listP\in\mup{T_{n,d}}$ be such that $d\not\in\l$. If $\l_t=2n-4$, then $d=n-5$ and such a partition is unique.
\end{proposition}
\begin{proof}
We derive the claim by proving the following two statements:
\begin{enumerate}
\item if $d\leq n-5$, then $d=n-5$ and there exists only one partition;
\item no partition exists if $n-4\leq d <n$.
\end{enumerate}
Let us now prove each claim separately.
\begin{enumerate}
\item If $\l_t=2n-4$, then the free area is $\{n-4,\ldots,n\}$ and we have, by Remark~\ref{rmk1}, that $h=j+1$. Moreover, from the fact that $d\not\in\l$ and $d\leq n-5$, or in other words $d$ is outside the free area, we must have $\l_t-d\in\l$ since $\l$ is unrefinable. Hence we are left with $j-2$ parts in the last area to be determined. Now, choosing four parts in the free area would contradict the unrefinability of $\l$. We also obtain a contradiction choosing less than two parts in the free area, i.e., more than $h-2=j-1$ parts in the first area. We conclude we can only choose three parts in the free area. In particular we have only four possible cases, i.e., $(a_{h-2},a_{h-1},a_h)\in\{(n-4,n-3,n-2),(n-4,n-2,n-1),(n-3,n-2,n),(n-2,n-1,n)\}$.

Let $h=3$. From $\su$ we obtain four equations by the four possible options in the free area:
\begin{align*}
  3n-9&=4n-8-d,\\
  3n-7&=4n-8-d,\\
  3n-5&=4n-8-d,\\
  3n-3&=4n-8-d.
\end{align*}
The first three equations lead to a contradiction on $d$ while from the last one we obtain $d=n-5$,  corresponding to the partition 
\[
\l=(1,2,\ldots,n-6,n-4,n-3,n+1,2n-4)
\]
which is unrefinable since, by hypothesis, we have $n\geq 11$.

Let $h=4$. As usual, maximizing $a_2+a_3+a_4$, fomr $\su$ we determine
\[
a_1=\frac{3n-9-d}{2}.
\]
Imposing $a_1<n-4$ we obtain $d>n-1$, a contradiction.

Let $h\geq 5$. Maximizing $a_2+a_2+\cdots+a_h$, from $\su$ we obtain
\[
a_1=\frac{3n+h^2-3h-13-d}{2},
\]
being  meaningful when $a_1<a_2=n-h$, from which we obtain $n-d+(h^2-h-13)<0$, a contradiction if $h\geq 5$.
\item Notice that, since $d\not\in\l$ and $n-4\leq d< n$, we can only choose $a_{h-1}$ and $a_h$ in the free area, being the third spot occupied already by $d$. From $\su$, in this case we have 
\begin{equation}\label{eq:2n-4h5}
a_1+a_2+\cdots+a_{h-2}=\frac{(h-1)(2n-4)-(a_{h-1}+a_h)}{2}.
\end{equation}
Let us now examine each possible choice of $d$. If $d=n-4$ or $d=n-2$, then $a_{h-1}+a_h$ is odd, therefore Eq.~\eqref{eq:2n-4h5} cannot be satisfied. Let us now assume that $d=n-3$. In this case $(a_{h-1},a_h)\in\{(n-4,n-2),(n-2,n)\}$. 

Let $h=3$. Maximizing $a_2+a_3$, we can calculate
\[
a_1=\frac{4n-8-(a_2+a_3)}{2},
\]
and so $a_1>n-5$,  a contradiction.

Let $h\geq 4$. Maximizing $a_2+a_3+\cdots+a_h$, from $\su$ we obtain 
\[
a_1=\frac{2n+h^2-h-12}{2}.
\]
Checking if $a_1<a_2=n-h-1$, we derive that $h^2+h-10<0$, a contradiction. The same contradiction is obtained when $d=n-1$.
 \qedhere
\end{enumerate}
\end{proof}
Let us address the remaining case $d\in\l$. Recall that, in this case, by Remark~\ref{rmk1} we have $h\in\{j,j+1\}$.
\begin{proposition}\label{prop:d3}
Let $\listP\in\mup{T_{n,d}}$ be such that $\l_t=2n-4$ and $d\in\l$. If $h=j$, then $d=3$ and such a partition is unique. If $h=j+1$, then for each $1\leq k\leq \lfloor(n-2)/2\rfloor$ there exists $\l\in\mup{T_{n,d}}$ with $d=n-(2k-1)$ and there does not exist $\l\in\mup{T_{n,d}}$ with $d=n-2k$.
\end{proposition}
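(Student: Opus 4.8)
The plan is to mimic the structure of Proposition~\ref{prop:d1} and Proposition~\ref{prop:d2}, splitting according to the value of $h$ and, within each, according to the admissible configurations of parts in the free area, which by hypothesis $\l_t = 2n-4$ is the set $\{n-4,n-3,n-2,n-1,n\}$. Since $d \in \l$, Remark~\ref{rmk2} fixes $\al_{j-1}=d$ and $\al_j = 2n-4$; the anti-symmetry also forces $\l_t - a_i \in \l$ for every $a_i$ chosen in the first area, so each such $a_i$ contributes a determined $\al_i = 2n-4-a_i$ in the last area. First I would treat the case $h=j$ exactly as in the earlier propositions: here the two ``large'' replacements $d$ and $2n-4$ are already accounted for, so the free area may contain at most two parts (three would violate unrefinability, e.g.\ via a sum equal to $2n-4$), and one rules out pairs summing to $2n-4$. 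Running the equation $\su$ for $h=2$ over the surviving pairs $(a_{h-1},a_h)$ yields a short list of linear equations in $d$; I expect only $(n-1,n)$ to give a valid $d$, namely $d=3$, with the resulting partition $\l = (1,2,\dots,n-1,2n-4)$ wait — more precisely $\l=(1,2,\dots,d-1,d+1,\dots,n-2,2n-4)$ with $d=3$, and one checks unrefinability because the two smallest missing parts sum to more than $2n-4$. For $h=3$ and $h\ge 4$ I would, as the paper does elsewhere, maximize $a_2+\dots+a_h$ to minimize $a_1$, derive the closed form for $a_1$, and contradict $a_1 < a_2$ (or $a_1 \le n-4$), the quadratic-in-$h$ term making this fail for all $h\ge 3$.

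Next I would handle the case $h=j+1$, which is the genuinely new phenomenon: now there is exactly one extra removed part relative to the replacements, and $d$ itself is a replacement. By Remark~\ref{rmk2}, $d$ may or may not lie in the free area $\{n-4,\dots,n\}$; write $d = n-(2k-1)$ or $d=n-2k$ depending on parity. The parity obstruction is the crux: in $\su$, with $\al_{j-1}=d$ and $\al_j = 2n-4$ fixed and the first-area parts pairing off with their reflections, the residual equation constraining the free-area parts forces a congruence condition, and when $d$ has the ``wrong'' parity (the $d=n-2k$ case) the sum $a_{h-1}+a_h$ of the two free-area parts is forced to be odd while the right-hand side is even — as happened in Proposition~\ref{prop:2n-4dno0}(2) — so no partition exists. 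When $d = n-(2k-1)$, I would exhibit an explicit partition: remove $\{d\}$ is already missing from $\pi_{n,d}$, so really one removes a suitable pair from the free area together with a short run in the first area and adds $d$, $2n-4$, and the reflected parts; concretely something of the shape $\l = (1,\dots, \widehat{n-2k}, \dots, n-2, n-1-?, \dots)$ — the precise partition is determined by solving $\su$ once the free-area pair is chosen, and one then verifies unrefinability using $n \ge 11$ so that all the relevant pairwise sums of missing parts either exceed $2n-4$ or land outside $\l$. The bound $1 \le k \le \lfloor (n-2)/2 \rfloor$ comes exactly from requiring $d = n-(2k-1) \ge 1$, i.e.\ $k \le (n-1)/2$, combined with $d \ge 1$ and the free area having room; I would pin down the endpoint by checking the extreme constructions directly.

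The main obstacle I anticipate is the unrefinability verification for the family of partitions with $d = n-(2k-1)$: unlike the sporadic cases $d\in\{1,2,3\}$ where the partition is essentially $(1,2,\dots,m,2n-\text{const})$ with a single large part and only two tiny missing parts, here the constructed $\l$ has missing parts $d$, one or two holes punched in the free area, and a block of the first area, so one must check that no two of these sum to a part of $\l$ — and the parts of $\l$ now include $2n-4$ and the reflected values $2n-4-a_i$ sitting in the last area. I would organize this by the three-areas picture of Remark~\ref{rmk2}: a sum of two missing parts lands in the first or free area only if it is small, which is excluded once $n$ is large (this is where $n\ge 11$ is used), and it lands in the last area precisely when it equals some $2n-4-a_i$, which one rules out by the explicit choice of which $a_i$ are removed. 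A secondary nuisance is bookkeeping the off-by-one between $h$ and $j$ consistently with the $\al_i$ indexing of Remark~\ref{rmk2} (in particular that $\al_{j-1}=d$, $\al_j = \l_t$, and $\al_i = \l_t - a_i$ for $i \le j-2$), but this is the same bookkeeping already set up in Remark~\ref{rmk1} and Remark~\ref{rmk2}, so it should carry over verbatim. Everything else is the now-routine ``maximize $a_2+\dots+a_h$, get a quadratic in $h$, contradict $a_1 < a_2$'' argument used throughout Section~\ref{sec:bounds}.
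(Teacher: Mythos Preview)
Your overall plan matches the paper's: split on $h=j$ versus $h=j+1$, enumerate the admissible free-area configurations, and use the ``maximize $a_2+\cdots+a_h$'' trick to kill large $h$. But there are two concrete gaps.

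\textbf{The case $h=j$ with three free-area parts.} Your claim that ``the free area may contain at most two parts (three would violate unrefinability, e.g.\ via a sum equal to $2n-4$)'' is false. The pairs in $\{n-4,\dots,n\}$ summing to $2n-4$ are $(n-4,n)$ and $(n-3,n-1)$, so four of the ten triples avoid both pairs, namely
\[
(n-4,n-3,n-2),\ (n-4,n-2,n-1),\ (n-3,n-2,n),\ (n-2,n-1,n).
\]
With three $a_i$'s in the free area and $h=j$, only $h-3=j-3$ first-area $a_i$'s remain, and these determine only $j-3$ of the last-area $\al_i$'s; the slot $\al_{j-2}$ is therefore a \emph{free} element of $\{n+1,\dots,2n-5\}$ not forced by any $a_i$. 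You must run a separate analysis to exclude this subcase: for $h=3$ one gets $\al_1<n+1$ directly, and for $h\ge 4$ the usual maximization gives an inequality of the shape $(\al_{h-2}-n)+(h^2-h-9)+d<0$, which fails. This is a genuine missing branch in your outline, not just bookkeeping.

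\textbf{The case $h=j+1$.} Here you have the count of free-area parts wrong: with $d\in\l$ and $h=j+1$, exactly \emph{three} $a_i$'s lie in the free area (two would give $h-2=j-1>j-2$ forced last-area parts, too many), so the parity argument does not go through ``the sum $a_{h-1}+a_h$ of the two free-area parts.'' The correct mechanism is that each admissible triple $(a_{h-2},a_{h-1},a_h)$ has sum $3n-(2t+1)$ for some $t\ge 1$; plugging into $\su$ at $h=4$ gives $2a_1=d+n-7+2t$, whence $n+d$ odd, i.e.\ $d=n-(2k-1)$. For the existence part, rather than a vague ``short run in the first area,'' you need one explicit choice: take $(a_2,a_3,a_4)=(n-3,n-2,n)$, compute $a_1=(n-3+d)/2$, check $a_1<n-4$ iff $d<n-5$, and verify unrefinability by noting that $a_1+a_i=\al_1$ would force $d\in\{-1,1,2\}$.
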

\begin{proof}
Let us assume that $h=j$. Since, by Remark~\ref{rmk2}, $\al_{h-1}=d$ and $\al_{h}=2n-4$ are already fixed, then $a_{h-1}$ and $a_h$ are parts of the free area. Notice that the free area cannot contain more than three parts. 

\medskip

Let us first assume that it only contains two parts, i.e., $(a_{h-1},a_{h})\in\{(n-4,n-3),(n-4,n-2),(n-4,n-1),(n-3,n-2),(n-3,n),(n-2,n-1),(n-2,n),(n-1,n)\}$.

Let $h=2$. We must have $a_1+a_2=2n-4+d$. In the case when $(a_1,a_2)=(n-1,n)$ we obtain $d=3$. which is the claim, since the corresponding partition $\l=(1,2,\ldots,n-2,2n-4)$ is unrefinable. If $(a_1,a_2)=(n-2,n-1)$ or $(n-2,n)$, we respectively obtain $d=1$ and $d=2$, which, by Proposition~\ref{prop:d1} and Proposition~\ref{prop:d2}, contradicts the maximality of $\l$. In the remaining cases, we obtain $d\leq 0$ which is a contradiction.

Let $h=3$. From $\su$, we have
\[
a_1=\frac{4n-8-(a_2+a_3)+d}{2}.
\]
Considering the maximal choice $(a_2,a_3)=(n-1,n)$ we obtain $a_1=(2n-7+d)/2<n-4$ when $d< -1$, a contradiction.

Let $h\geq 4$. From $\su$ we obtain
\[
a_1+a_2+\cdots+a_{h-2}=\frac{(h-1)(2n-4)-(a_{h-1}+a_h)+d}{2}.
\]
Maximizing $a_2+\ldots+a_{h}$, we have
\[
a_1=\frac{2n+h^2-h-13+d}{2},
\]
which satisfies $a_1<a_2$ when $h^2+h-11+d<0$, a contradiction when $h\geq 4$. 

\vspace{1.5mm}

Under the assumption that $h=j$, it remains to consider the case of three parts in the free area, i.e., $(a_{h-2},a_{h-1},a_h)\in\{(n-4,n-3,n-2),(n-4,n-2,n-1),(n-3,n-2,n),(n-2,n-1,n)\}$. In this case $a_1,a_2,\ldots,a_{h-3}$ determine $\al_1,\al_2,\ldots,\al_{h-3=j-3}$, while the part  $n+1 \leq \al_{j-2} \leq 2n-5$ is not determined by one of the $a_i$s.

Let $h=3$. We have $a_1+a_2+a_3=2n-4+d+\al_1$, with $n+1\leq \al_1\leq 2n-5$ and each possible choice of the parts in the free area implies that $\al_1 < n+1$, which is a contradiction.

Let $h=4$. From $\su$ we have 
\[
a_1=\frac{2(2n-4)-(a_2+a_3+a_4)+d+\al_2}{2},
\]
and maximizing $a_2+a_3+a_4$ we obtain $a_1=(n-5+d+\al_2)/2$. Since $a_1<n-4$, then $\al_2 < n-3-d < n-3$, a contradiction.

Let $h\geq 5$.  From $\su$ we have
\[
a_1+a_2+\cdots+a_{h-3}=\frac{(h-2)(2n-4)-(a_{h-2}+a_{h-1}+a_h)+d+\al_{h-2}}{2}.
\]
From the maximal choice of $a_2+a_3+\cdots+a_{h}$, we obtain 
\[
a_1=\frac{n+h^2-3h-9+d+\al_{h-2}}{2},
\]
and checking if $a_1<a_2=n-h$ leads to $(\al_{h-2}-n)+(h^2-h-9)+d<0$ which is not compatible with $h\geq 5$. 

This concludes the case $h=j$.

\vspace{3mm}

Let us now address the remaining case $h=j+1$. In this setting we have only three parts in the free area and, as before, the possible choices are the following triple of elements 
\begin{equation}\label{eq:freearea2n-4}
(a_{h-2},a_{h-1},a_h)\in\{(n-4,n-3,n-2),(n-4,n-2,n-1),(n-3,n-2,n),(n-2,n-1,n)\}.
\end{equation}

Let $h=3$. We have $a_1+a_2+a_3=2n-4+d$. In the case $(a_1,a_2,a_3)=(n-2,n-1,n)$ we obtain $d=n+1$, a contradiction. In the other three cases we obtain $d$ equals $n-1$, $n-3$ and $n-5$, or, in other words, $d=n-(2k-1)$ for $1 \leq k \leq 3$ as claimed. The remaining cases $3\leq d\leq n-7$ are considered by showing partitions obtained in the case $h=4$.

Let $h=4$. We have
\[
a_1=\frac{2(2n-4)-(a_2+a_3+a_4)+d}{2}.
\]
Notice that, for each choice of $(a_2,a_3,a_4)$, we have $a_2+a_3+a_4=3n- (2t+1)$, for some $t\geq 0$. Therefore, since $a_1$ is an integer, $n$ is even if and only if $d$ is odd. Precisely, $d=n-(2k-1)$ for some $1\leq k\leq \lfloor(n-2)/2\rfloor$ (recall that, by Proposition~\ref{prop:d1} and Proposition~\ref{prop:d2}, the cases $d=1,2$ are not maximal when $\l_t=2n-4$). To prove that for all $3\leq d\leq n-7$ there exists $\l\in\mup{T_{n,d}}$, consider, for example, the assignment  $(a_2,a_3,a_4)=(n-3,n-2,n)$. In this case, from $\su$, we obtain $a_1=(n-3+d)/2$ which satisfies $a_1<n-4$ if and only if $d<n-5$ and the corresponding partition is unrefinable. Indeed, we can violate the refinability only if either $a_1+a_2=\al_1$ or  $a_1+a_3=\al_1$ or $a_1+a_4=\al_1$, and this is only possible if $d\in\{-1,1,2\}$, a contradiction.

If $h\geq 5$, from $\su$ we have 
\[
a_1+a_2+\cdots+a_{h-3}=\frac{(h-2)(2n-4)-(a_{h-2}+a_{h-1}+a_h)+d}{2},
\]
from which we obtain again that $n$ is even if and only if $d$ is odd, i.e., $d=n-(2k-1)$ for some positive integer $k$.
\end{proof}

\begin{corollary}\label{cor:d=3}
Let $n$ be odd. Then $\#\mup{T_{n,3}}=1$.
\end{corollary}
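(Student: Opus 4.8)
The plan is to reduce the statement to the classification of maximal partitions already obtained for the possible values $\l_t\in\{2n-2,2n-3,2n-4\}$, and then to run a short parity argument that eliminates every surviving case except the one singled out in Proposition~\ref{prop:d3}.

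First I would determine the maximal possible value of $\l_t$ for unrefinable partitions of $T_{n,3}$. By Proposition~\ref{prop:maxlt} we have $\l_t\leq 2n-2$ in all cases; moreover $\l_t=2n-2$ forces $d=1$ by Proposition~\ref{prop:d1} and $\l_t=2n-3$ forces $d=2$ by Proposition~\ref{prop:d2}, both incompatible with $d=3$. Hence every unrefinable partition of $T_{n,3}$ satisfies $\l_t\leq 2n-4$, and this bound is attained by the partition $(1,2,\dots,n-2,2n-4)$ produced in the proof of Proposition~\ref{prop:d3}. Therefore $\mup{T_{n,3}}$ is precisely the set of unrefinable partitions of $T_{n,3}$ with $\l_t=2n-4$.

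Next I would split into cases according to whether the part $3$ belongs to a given $\l\in\mup{T_{n,3}}$. If $3\notin\l$, then Proposition~\ref{prop:2n-4dno0} forces $d=n-5$, which is impossible since $n\geq 11$ gives $n-5\geq 6$; hence $3\in\l$. Now Remark~\ref{rmk1} gives $h\in\{j,j+1\}$. In the subcase $h=j+1$, Proposition~\ref{prop:d3} forces $d=n-(2k-1)$ for some positive integer $k$; but $n$ odd makes $n-(2k-1)$ even, while $d=3$ is odd, a contradiction. Thus $h=j$, and the corresponding assertion of Proposition~\ref{prop:d3} guarantees that the partition is uniquely determined. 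Assembling the cases yields $\#\mup{T_{n,3}}=1$.

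The argument is essentially bookkeeping over Propositions~\ref{prop:d1}, \ref{prop:d2}, \ref{prop:2n-4dno0} and \ref{prop:d3}, so I do not expect a real obstacle; the only delicate points are the parity observation ruling out $d=3$ when $h=j+1$ and the elementary check that $d=3\neq n-5$ under the standing hypothesis $n\geq 11$.
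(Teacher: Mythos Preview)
Your proof is correct and follows essentially the same approach as the paper: both split according to whether $d\in\l$ and whether $h=j$ or $h=j+1$, then invoke Propositions~\ref{prop:2n-4dno0} and~\ref{prop:d3} together with the parity observation that $n$ odd makes $n-(2k-1)$ even. Your version is slightly more explicit in first establishing that $\l_t=2n-4$ via Propositions~\ref{prop:d1} and~\ref{prop:d2}, which the paper leaves implicit from context.
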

\begin{proof}
By Proposition~\ref{prop:2n-4dno0}, if $d\not\in\l$ then there does not exist any maximal unrefinable partition with $d=3$. By Proposition~\ref{prop:d3}, if $d\in\l$ then we have that, for $h=j+1$, $n$ odd implies $d$ even and, for $h=j$,  there exists only one maximal unrefinable partition with $d=3$.
\end{proof}

\subsection{The case $\l_t=2n-5$}\label{sec:descr2nminus5}
Also in this case, by Proposition~\ref{prop:maxlt}, we have to consider both cases $d\not\in\l$ and $d\in\l$. Let us address the two cases separately.
\begin{proposition}\label{prop:2n-5dno0}
Let $\listP\in\mup{T_{n,d}}$ such that $d\not\in\l$. If $\l_t=2n-5$, then $d=n-6$ and such a partition is unique.
\end{proposition}
\begin{proof}
We derive the claim from proving the following two statements:
\begin{enumerate}
\item if $d\leq n-6$, then $d=n-6$ and there exists only one partition;
\item no partition exists if $n-5\leq d <n$.
\end{enumerate}
Let us now prove each claim separately.

\begin{enumerate}
\item If $\l_t=2n-5$ the free area is $\{n-5,\ldots,n\}$ and we have, by Remark~\ref{rmk1}, that $h=j+1$. Moreover, from the fact that $d\not\in\l$ and $d\leq n-6$, or in other words $d$ is outside the free area, we must have $\l_t-d\in\l$ since $\l$ is unrefinable. Hence we are left with $j-2=h-3$ parts in the last area to be determined. Now, as already concluded in the case $\l_t=2n-4$, we can only choose three parts in the free area. In particular we have only eight possible cases, i.e.,
\[
    \left(a_{h-2},a_{h-1},a_h\right)=\left\{ 
    \begin{aligned} 
   & \left(n-5,n-4,n-3\right),  \left(n-5,n-4,n-2\right),\\
    &\left(n-5,n-3,n-1\right), \left(n-5,n-2,n-1\right),\\
    &\left(n-4,n-3,n\right),  \left(n-4,n-2,n\right),\\
    &\left(n-3,n-1,n\right),  \left(n-2,n-1,n\right)
    \end{aligned}
    \right\}.
    \]
    
    Let $h=3$. From $\su$ we obtain eight equations by the eight possible options in the free area
\begin{align*}
    &4n-10-d=3n-3,\\
    &4n-10-d=3n-4,\\
    &4n-10-d=3n-6,\\
    &4n-10-d=3n-7,\\
    &4n-10-d=3n-8,\\
    &4n-10-d=3n-9,\\
    &4n-10-d=3n-11,\\
    &4n-10-d=3n-12.
\end{align*}
 In the first case we obtain $d=n-7$ which is a contradiction since  if $\l\in\mup{T_{n,n-7}}$ then $\l_t=2n-4$. In the last six cases we have $d> n-5$ and so we obtain a contradiction. From the second one we obtain $d=n-6$,  corresponding to the partition 
\[
\l=(1,\dots,n-7,n-5,n-4,n-2,n+1,2n-5),
\]
which is unrefinable for $n\geq 11$.

Let $h=4$. As usual, maximizing $a_2+a_3+a_4$, from $\su$ we determine
    \[
    a_1=\frac{3n-12-d}{2}.
    \]
    Imposing $a_1<n-5$ we obtain $n-2-d<0$,  a contradiction.
    
    Let $h\geq 5$. Maximizing $a_2+a_2+\cdots+a_h$, from $\su$ we obtain
    \[
a_1=\frac{3n+h^2-2h-20-d}{2},
\]
being  meaningful when $a_1<a_2=n-h-1$, from which we obtain 
\[
n-d+(h^2-18)<0,
\]
a contradiction if $h\geq 5$.

\item Notice that, since $d\not\in\l$ and $n-5\leq d< n$, we can only choose $a_{h-1}$ and $a_h$ in the free area, being the third spot occupied already by $d$. From $\su$, in this case we have 
\begin{equation}\label{eq:2n-5h5}
a_1+a_2+\cdots+a_{h-2}=\frac{(h-1)(2n-5)-(a_{h-1}+a_h)}{2}.
\end{equation}
We already know that if $d=n-(2k-1)$ and $\l\in\mup{T_{n,d}}$, then $\l_t=2n-4$, so we can suppose that $d\in\{n-4,n-2\}$.

\vspace{1.5mm}

First suppose that $d=n-4$. In this case 
\[
(a_{h-1},a_h)\in\{(n-5,n-3),(n-5,n-2),(n-3,n),(n-2,n)\}.
\]

Let $h=3$. Maximizing $a_2+a_3$, we can calculate
\begin{equation}\label{eq:dn-4h3}
a_1=\frac{4n-10-(a_2+a_3)}{2},
\end{equation}
so $a_2+a_3$ must be an even number. Now if $a_2+a_3=2n-2$, we obtain $a_1=n-4$, a contradiction, and if the sum is $a_2+a_3=2n-8$, we obtain $a_1=n-1$, again a contradiction.

Let $h\geq 4$. Maximizing $a_2+a_3+\cdots+a_h$, from $\su$ we obtain 
\[
a_1=\frac{2n+h^2-17}{2}.
\]
Checking if $a_1<a_2=n-h-2$, we derive that $h^2+h-13<0$, a contradiction if $h\geq 4$. 

\vspace{1.5mm}

Now suppose that $d=n-2$. In this case 
\[
(a_{h-1},a_h)\in\{(n-5,n-4),(n-5,n-1),(n-4,n),(n-1,n)\}.
\]

Let $h=3$. Maximizing $a_2+a_3$, from Eq.~\eqref{eq:dn-4h3}
 $a_2+a_3$ must be an even number. Now if $a_2+a_3=2n-4$, we obtain $a_1=n-3$, a contradiction, and if the sum is $a_2+a_3=2n-6$, we obtain $a_1=n-2$, again a contradiction.

Let $h\geq 4$. Maximizing $a_2+a_3+\cdots+a_h$, from $\su$ we obtain 
\[
a_1=\frac{2n+h^2-18}{2}.
\]
Checking if $a_1<a_2=n-h-2$, we derive that $h^2+h-14<0$, a contradiction if  $h\geq 4$.\qedhere

\end{enumerate}
\end{proof}

Let us address the case $d\in\l$. Recall that, in this case, by Remark~\ref{rmk1} we have $h\in\{j,j+1\}$.
\begin{proposition}\label{prop:2n-5dno0_2}
Let $\listP\in\mup{T_{n,d}}$ be such that $\l_t=2n-5$ and $d\in\l$. If $h=j$, then $d=4$, $n$ is even and such a partition is unique. If $h=j+1$, then for each $1\leq k\leq \lfloor(n-4)/2\rfloor$ there exists $\l\in\mup{T_{n,d}}$ with $d=n-2k$.
\end{proposition}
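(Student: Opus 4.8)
The plan is to mirror the proof of Proposition~\ref{prop:d3}, now with $\l_t=2n-5$. Since $d\in\l$, Remark~\ref{rmk2} fixes the replacements $\al_{j-1}=d$ and $\al_j=\l_t=2n-5$, and $h\in\{j,j+1\}$ as recalled before the statement (from Eq.~\eqref{eq:maxh} with $\lceil\l_t/2\rceil=n-2$ together with Remark~\ref{rmk1}). The free area is $\{n-5,\dots,n\}$; since no two removed parts may sum to $\l_t=2n-5$ and the pairs $\{n-5,n\}$, $\{n-4,n-1\}$, $\{n-3,n-2\}$ partition it, at most three of the $a_i$ can lie in the free area, and a triple of free parts is admissible exactly when it meets each of these three pairs in one point --- the eight triples already displayed in the proof of Proposition~\ref{prop:2n-5dno0}.

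\emph{The case $h=j$.} First suppose two free parts $a_{h-1},a_h$ are chosen. For $h=2$ we must have $a_1+a_2=2n-5+d$; among the admissible pairs in $\{n-5,\dots,n\}$ the smaller ones give $d\le 0$ or $d\in\{1,2,3\}$, the latter excluded by Propositions~\ref{prop:d1},~\ref{prop:d2} and~\ref{prop:d3} since then the maximal largest part would exceed $2n-5$, so only $(a_1,a_2)=(n-1,n)$ survives, giving $d=4$; the resulting $\l=(1,2,\dots,n-2,2n-5)$ is unrefinable, its two least missing parts $n-1,n$ satisfying $n-1+n>2n-5$. For $h=3$ and $h\ge 4$, and likewise when three free parts are chosen --- in which case $\al_{j-2}$ is the only undetermined replacement, with $n+1\le\al_{j-2}\le 2n-6$ --- the standard device of maximising $a_2+\dots+a_h$ (hence minimising $a_1$) and checking $a_1<a_2$, respectively $\al_{j-2}\ge n+1$, yields a contradiction exactly as in Proposition~\ref{prop:d3}. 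Finally $n$ must be even: if $n$ were odd then $d=4=n-(2k-1)$ for $k=(n-3)/2\le\lfloor(n-2)/2\rfloor$, so by Proposition~\ref{prop:d3} there would be a maximal unrefinable partition of $T_{n,4}$ with largest part $2n-4>2n-5$, a contradiction; uniqueness is immediate, a single admissible pair having survived.

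\emph{The case $h=j+1$.} Now exactly three free parts occur, one of the eight admissible triples, and no replacement is left undetermined. For $h=3$ we have $a_1+a_2+a_3=2n-5+d$, so $d$ runs over $\{n-7,n-6,n-4,n-3,n-2,n-1,n+1,n+2\}$; the last two are impossible, and those with $n-d$ odd must be discarded, since by Proposition~\ref{prop:d3} they already admit a partition with $\l_t=2n-4$ contradicting maximality, which leaves $d=n-2k$ for $k\in\{1,2,3\}$. More generally, maximality forces $n-d$ to be even --- again by Proposition~\ref{prop:d3}, as otherwise $\l_t=2n-4$ is attainable --- and $d\notin\{1,2,3\}$ (not maximal at $\l_t=2n-5$ by Propositions~\ref{prop:d1},~\ref{prop:d2} and~\ref{prop:d3}), so the admissible values of $d$ are precisely $n-2k$ with $1\le k\le\lfloor(n-4)/2\rfloor$. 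To exhibit, for each such $k$, an actual $\l\in\mup{T_{n,d}}$, I would use $h=3$ with the appropriate admissible triple when $k\le 3$, and for $k\ge 4$ take $h=4$ with $(a_2,a_3,a_4)=(n-3,n-1,n)$, whence $a_1=(n+d-6)/2$ --- an integer lying in the first area, as $n-d$ is even and $4\le d\le n-6$ --- and then verify that the resulting partition is unrefinable.

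The arithmetic of $\su$ is routine once the free-area casework is organised; the real work lies in the unrefinability verifications --- confirming that every constructed $\l$, in particular the members of the $h=4$ family, has no two missing parts summing to one of its parts --- together with keeping entirely precise the interplay with Proposition~\ref{prop:d3} and the bounds of Propositions~\ref{prop:d1} and~\ref{prop:d2}, which govern when $\l_t=2n-4$ is attainable and hence when $2n-5$ is genuinely maximal, across both parities of $n$.
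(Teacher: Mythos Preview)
Your approach mirrors the paper's exactly --- the same free-area casework, the same split on $h=j$ versus $h=j+1$, and the same maximisation device. You are in fact more explicit than the paper on why $n$ must be even when $d=4$ in the $h=j$ case.

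There is, however, a genuine gap in your $h=j+1$ construction for $k\ge 4$. With $(a_2,a_3,a_4)=(n-3,n-1,n)$ you get $a_1=(n+d-6)/2$ and $\al_1=2n-5-a_1=(3n-4-d)/2$; then
\[
a_1+a_2=\frac{n+d-6}{2}+(n-3)=\frac{3n+d-12}{2},
\]
which equals $\al_1$ precisely when $d=4$. So for $d=4$ --- that is, $k=(n-4)/2$, the largest admissible $k$ when $n$ is even --- your candidate partition is refinable, and the family fails to cover the full range claimed. The paper takes $(a_2,a_3,a_4)=(n-4,n-2,n)$ instead, for which the analogous check gives $a_1+a_i=\al_1$ only at $d\in\{-1,1,3\}$, none admissible; indeed Table~\ref{table2} lists the triple $(n-3,n-1,n)$ with the explicit restriction $d>4$. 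The fix is therefore trivial --- a different choice of triple --- but it is exactly the unrefinability verification your closing paragraph flags as needing care, and in this instance it does fail.
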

    
\begin{proof}
Let us assume that $h=j$. Since, by Remark~\ref{rmk2}, $\al_{h-1}=d$ and $\al_{h}=2n-5$ are already fixed, then $a_{h-1}$ and $a_h$ are parts of the free area. Notice that the free area cannot contain more than three parts. 

\medskip

Let us first assume that it only contains two parts, i.e., 

\[
    \left(a_{h-1},a_h\right)\in\left\{ 
    \begin{aligned} 
   & \left(n-5,n-4\right),  \left(n-5,n-3\right),\\
    &\left(n-5,n-2\right), \left(n-5,n-1\right),\\
    &\left(n-4,n-3\right),  \left(n-4,n-2\right),\\
    &\left(n-4,n\right),  \left(n-3,n-1\right),\\
    &(n-3,n),(n-2,n-1),\\
    &(n-2,n),(n-1,n)
    \end{aligned}
    \right\}.
    \]
    
    Let $h=2$. We must have $a_1+a_2=2n-5+d$. In the case when $(a_1,a_2)=(n-1,n)$ we obtain $d=4$. which is the claim, since the corresponding partition $\l=(1,2,\ldots,n-2,2n-5)$ is unrefinable. In all the other cases we obtain either $0<d<4$ which, by Proposition~\ref{prop:d1}, Proposition~\ref{prop:d2} and Proposition~\ref{prop:d3}, contradicts the maximality of $\l$, or $d\leq 0$ which is also a contradiction.

Let $h=3$. From $\su$, we have
\[
a_1=\frac{4n-10-(a_2+a_3)+d}{2}.
\]
Considering the maximal choice $(a_2,a_3)=(n-1,n)$ we obtain $a_1=(2n-9+d)/2<n-5$ when $d< -1$, a contradiction.

Let $h\geq 4$. From $\su$ we obtain
\[
a_1+a_2+\cdots+a_{h-2}=\frac{(h-1)(2n-5)-(a_{h-1}+a_h)+d}{2}.
\]
Maximizing $a_2+\ldots+a_{h}$, we have
\[
a_1=\frac{2n+h^2-h-18+d}{2},
\]
which satisfies $a_1<a_2=n-h-2$ when $h^2+h-14+d<0$, a contradiction when $h\geq 4$. 

Under the assumption that $h=j$, it remains to consider the case of three parts in the free area, i.e.,
\begin{equation}\label{eq:freearea2n-5}
  \left(a_{h-2},a_{h-1},a_h\right)\in\left\{ 
    \begin{aligned} 
   & \left(n-5,n-4,n-3\right),  \left(n-5,n-4,n-2\right),\\
    &\left(n-5,n-3,n-1\right), \left(n-5,n-2,n-1\right),\\
    &\left(n-4,n-3,n\right),  \left(n-4,n-2,n\right),\\
    &\left(n-3,n-1,n\right),  \left(n-2,n-1,n\right)
    \end{aligned}
    \right\}.
\end{equation}
In this case $a_1,a_2,\ldots,a_{h-3}$ determine $\al_1,\al_2,\ldots,\al_{h-3=j-3}$, while the element  $n+1 \leq \al_{j-2} \leq 2n-6$ is not determined by one of the $a_i$s.

Let $h=3$. We have $a_1+a_2+a_3=2n-5+d+\al_1$, with $n+1\leq \al_1\leq 2n-5$.  If $a_1=n-2$, we obtain $d+\al_1=n+2$ and so we have the only possibility of $d=1$ and $\al_1=n+1$, which, by Proposition~\ref{prop:d1}, contradicts the maximality of $\l$. In all the other cases we obtain $d<0$, a contradiction.

Let $h=4$. From $\su$ we have 
\[
a_1=\frac{4n-10-(a_2+a_3+a_4)+d+\al_2}{2},
\]
and maximizing $a_2+a_3+a_4$ we obtain $a_1=(n-7+d+\al_2)/2$. Since $a_1<n-5$, then $\al_2 < n-3-d < n-3$, a contradiction.

Let $h\geq 5$.  From $\su$ we have
\[
a_1+a_2+\cdots+a_{h-3}=\frac{(h-2)(2n-5)-(a_{h-2}+a_{h-1}+a_h)+d+\al_{h-2}}{2}.
\]
From the maximal choice of $a_2+a_3+\cdots+a_{h}$, we obtain 
\[
a_1=\frac{n+h^2-2h-15+d+\al_{h-2}}{2},
\]
and checking if $a_1<a_2=n-h-1$ leads to $(\al_{h-2}-n)+(h^2-13)+d<0$ which is not compatible with $h\geq 5$. 

This concludes the case $h=j$.

\medskip

Let us now address the remaining case $h=j+1$. In this setting we have only three parts in the free area and the possible choices are those in the set presented in Eq.~\eqref{eq:freearea2n-5}.

Let $h=3$. We have $a_1+a_2+a_3=2n-5+d$. In the case $(a_1,a_2,a_3)\in\{(n-3,n-1,n),(n-2,n-1,n)\}$ we obtain $d>n$, a contradiction. If $(a_1,a_2,a_3)=(n-4,n-2,n)$ or $(n-5,n-2,n-1)$ or $(n-5,n-4,n-3)$ then we obtain respectively $d=n-1$, $n-3$ and $n-7$ which, by Proposition~\ref{prop:d3}, contradict the maximality of $\l$. In the other three cases we obtain $d$ equals $n-2$, $n-4$ and $n-6$, or, in other words, $d=n-2k$ for $1 \leq k \leq 3$ as claimed. The remaining cases $4\leq d\leq n-8$ are considered by showing partitions obtained in the case $h=4$.

Let $h=4$. We have
\[
a_1=\frac{2(2n-5)-(a_2+a_3+a_4)+d}{2}.
\]
Notice that, for each choice of $(a_2,a_3,a_4)$ in $\{(n-2, n - 1, n), (n - 4, n - 3, n), (n - 5, n - 3, n - 1), (n - 5, n - 4, n - 2)\}$, we have that $n$ is even if and only if $d$ is odd, which, by Proposition~\ref{prop:d3}, contradicts the maximality of $\l$. In the other four cases, since $a_1$ is an integer, we obtain that $d=n-2k$, for some integer $k$.
 To prove that for all $4\leq d\leq n-8$ there exists $\l\in\mup{T_{n,d}}$, consider for example, the assignment  $(a_2,a_3,a_4)=(n-4,n-2,n)$. In this case, from $\su$, we obtain $a_1=(n-4+d)/2$ which satisfies $a_1<n-5$ if and only if $d<n-6$ and the corresponding partition is unrefinable. Indeed, we can violate the refinability only if either $a_1+a_2=\al_1$ or  $a_1+a_3=\al_1$ or $a_1+a_4=\al_1$, and this is only possible if $d\in\{-1,1,3\}$, a contradiction by Proposition~\ref{prop:d1}, Proposition~\ref{prop:d3} and  since $d>0$.
 
 If $h\geq 5$, from $\su$ we have 
\[
a_1+a_2+\cdots+a_{h-3}=\frac{(h-2)(2n-5)-(a_{h-2}+a_{h-1}+a_h)+d}{2}.
\]
Since $2n-5$ is odd, we have to consider the parity of $h-2$, and so of $h$. If $h$ is even, then we obtain a contradiction for $(a_{h-2},a_{h-1},a_h)$ in $\{(n-2, n - 1, n), (n - 4, n - 3, n), (n - 5, n - 3, n - 1), (n - 5, n - 4, n - 2)\}$ and $d=n-2k$ for some positive integer $k$ in the other cases. Instead if $h$ is odd we obtain a contradiction for $(a_{h-2},a_{h-1},a_h)$ in $\{(n - 3, n - 1, n), (n - 4, n - 2, n)), (n - 5, n - 2, n - 1)), (n - 5, n - 4, n - 3))\}$ and again $d=n-2k$ for some positive integer $k$ in the other cases.
\end{proof}

\section{Counting maximal unrefinable partitions}\label{sec:count}
In the previous section we proved the existence of maximal unrefinable partitions with specific parameters. We use those results in the current section to specify all the possible configurations meeting the requirements and therefore counting the corresponding number of partitions.
The two cases to be considered are addressed in this section using the same strategy. Therefore, despite the problems have a slightly different combinatorial structure, we try to use a similar notation and terminology in Sec.~\ref{sec:2nminus4} and in Sec.~\ref{sec:2nminus5}.

\subsection{The case $\l_t=2n-4$}\label{sec:2nminus4} We have already proved that if $\l\in\mup{T_{n,d}}$, then $d=3$ or $d=n-(2k-1)$, for $1\leq k \leq\lfloor(n-2)/2\rfloor$. We denote by $(\al_1,\ldots,\al_j)\setminus (a_1,\ldots,a_h)$ the partition $\l$ obtained from $\pi_{n,d}$ by removing the elements $a_i$s and replacing them with the elements $\al_i$s. We have already shown in the previous section that, when $h\leq 3$, only the following partitions belong to $\mup{T_{n,d}}$ (cf. the proof of Proposition~\ref{prop:d3}, computing $\al_i$ from the corresponding $a_i$):
\begin{itemize}
\item $(3,2n-4)\setminus (n-1,n)$ for $d=3$,
\item $(n-1,2n-4)\setminus (n-3,n-2,n)$ for $d=n-1$,
\item $(n-3,2n-4)\setminus (n-4,n-2,n-1)$ for $d=n-3$,
\item $(n-5,2n-4)\setminus (n-4,n-3,n-2)$ for $d=n-5$,
\item $(n+1,2n-4)\setminus (n-2,n-1,n)$ for $d=n-5$ (cf. Proposition~\ref{prop:2n-4dno0}).
\end{itemize}

Recall that for $h\geq 4$ we have the following choices for the free area, $(a_{h-2},a_{h-1},a_h)\in\{(n-4,n-3,n-2),(n-4,n-2,n-1),(n-3,n-2,n),(n-2,n-1,n)\}$. From now on, according to Proposition~\ref{prop:2n-4dno0}, we must only consider the case $d\in\l$. Indeed, the only maximal unrefinable partition with $d\not\in\l$ and $\l_t=2n-4$ is the fifth partition in the previous list.

Let $h=4$. We have
\[
a_1=\frac{2(2n-4)-(a_2+a_3+a_4)+d}{2},
\]
and, assigning all the possible values to $a_2,a_3,a_4$, we obtain the partitions:
\begin{itemize}
   \item $(d,(3n-3-d)/2,2n-4)\setminus ((n-5+d)/2,n-2,n-1,n)$, with $d<n-5$, otherwise $a_1\geq a_2$, and $d\ne 3$, otherwise $a_1+a_2=\al_2$;
    \item $(d,(3n-5-d)/2,2n-4)\setminus((n-3+d)/2,n-3,n-2,n)$, with $d<n-5$;
    \item $(d,(3n-7-d)/2,2n-4)\setminus((n-1+d)/2,n-4,n-2,n-1)$, with $d<n-7$;
    \item $(d,(3n-9-d)/2,2n-4)\setminus ((n+1+d)/2,n-4,n-3,n-2)$, with $d<n-9$. 
\end{itemize}

Let $h\geq 5$. We have
\[
a_1=\frac{n+(h^2-3h-9)+d}{2}
\]
obtained from the maximal choice for $a_2+a_3+\cdots +a_{h-3}$ and from $a_{h-2}=n-2$, $a_{h-1}=n-1$ and $a_h=n$, which is also the maximal choice in the free area, and we obtain the partition
\begin{align*}
&\left(d,n+1,\dots,n+h-4,\frac{3n-(h^2-3h-1)-d}{2},2n-4\right)  \setminus \\ 
 &   \left(\frac{n+(h^2-3h-9)+d}{2},n-h,\dots,n-5,n-2,n-1,n\right),
\end{align*}
with $d<n-(h^2-h-9)$.
Notice that $a_1+a_2 > \al_1$, therefore the obtained partition is unrefinable.

All the others, obtained for the remaining possibilities for $a_2+a_3+\cdots +a_{h-3}$, are obtained by replacing $(a_1,a_2,\ldots,a_{h-3})$ with $(a_1+i,a_2-i_1,\ldots,a_{h-3}-i_{h-4})$, where $i=\sum_{r=1}^{h-4}i_r$ and such that $a_1+i < a_2-i_1 < \dots < a_{h-3}-i_{h-4}$.

We proceed similarly for the other three choices in the free area. All the results are summarized in Table~\ref{table1} (displayed at the end of the paper). The first row of the table is Corollary~\ref{cor:d=3}  and the next four rows are summarized in the following three results.
\begin{corollary}
$\#\mup{T_{n,n-1}}=1.$
\end{corollary}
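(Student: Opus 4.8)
The plan is to show that the only maximal unrefinable partition of $T_{n,n-1}$ is $(n-1, 2n-4)\setminus(n-3,n-2,n)$, i.e.\ the second partition in the first list above, and that no construction with $h\geq 4$ produces a valid partition when $d=n-1$. First I would invoke the results of the previous section: by Proposition~\ref{prop:2n-4dno0} the case $d\not\in\l$ contributes nothing (since $d=n-1$ forces $\l_t\leq 2n-5$ there when $n-1$ is outside the relevant range, and in any case the unique $d\not\in\l$ solution had $d=n-5\ne n-1$), so we may assume $d\in\l$ and $\l_t=2n-4$. For $h\leq 3$, the explicit enumeration in the proof of Proposition~\ref{prop:d3} shows that the only partition with $d=n-1$ is $(n-1,2n-4)\setminus(n-3,n-2,n)$, and this is unrefinable for $n\geq 11$.

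The remaining work is to rule out $h\geq 4$. Here I would use the formulas displayed just above for the four choices of parts in the free area. For $h=4$, substituting $d=n-1$ into each of the four parametrized families, one checks the stated constraint ($d<n-5$, $d<n-5$, $d<n-7$, $d<n-9$ respectively): since $d=n-1$ violates every one of these inequalities, each would force $a_1\geq a_2$, a contradiction. For $h\geq 5$ the single displayed family requires $d<n-(h^2-h-9)$; with $h\geq 5$ the quantity $h^2-h-9\geq 11>1$, so $d=n-1$ again fails the bound, and the same holds for the analogous families for the other three free-area choices (each carries a constraint of the shape $d<n-c_h$ with $c_h$ strictly increasing in $h$ and already $\geq 1$ at $h=5$, indeed $\geq$ the corresponding $h=4$ value). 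Hence no partition with $d=n-1$ and $h\geq 4$ exists.

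Combining the three pieces — no contribution from $d\not\in\l$, exactly one partition for $h\leq 3$, and none for $h\geq 4$ — gives $\#\mup{T_{n,n-1}}=1$. I expect the only mildly delicate point to be bookkeeping: making sure that, across all four free-area configurations and for both $h=4$ and $h\geq 5$, the relevant inequality $d<n-c_h$ is genuinely violated by $d=n-1$, i.e.\ that $c_h\geq 2$ in every case (equivalently that the "otherwise $a_1\geq a_2$" clause really does trigger). This is immediate from the constraints already recorded in the list preceding the statement and from $h^2-h-9$ being increasing, so no substantive new computation is needed.
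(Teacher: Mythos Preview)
Your proposal is correct and follows essentially the same approach as the paper: both read off from the enumeration summarized in Table~\ref{table1} (and the preceding text) that the only construction yielding $d=n-1$ is the $h=3$ partition $(n-1,2n-4)\setminus(n-3,n-2,n)$, with all $h\geq 4$ families excluded by their range constraints on $d$. One cosmetic slip: for $d=n-1$ to violate $d<n-c_h$ you need $c_h\geq 1$, not $c_h\geq 2$, but since every relevant $c_h$ is at least~$5$ this has no effect on the argument.
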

\begin{corollary}
$\#\mup{T_{n,n-3}}=1.$
\end{corollary}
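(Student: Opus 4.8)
The plan is to show that the single $h=3$ construction already recorded for $d=n-3$ is the only maximal unrefinable partition, every other admissible configuration being excluded by the constraints derived in Section~\ref{sec:2nminus4}. First I would observe that, since $n-d=3$ is odd, Eq.~\eqref{eq:bounds} forces $\l_t\leq 2n-4$ for every $\l\in\up_{T_{n,n-3}}$, while the partition $(n-3,2n-4)\setminus(n-4,n-2,n-1)$ attains this value; hence maximality means precisely $\l_t=2n-4$. By Proposition~\ref{prop:2n-4dno0}, the only distance yielding a maximal unrefinable partition with $d\not\in\l$ and $\l_t=2n-4$ is $d=n-5$, and since $n-5\ne n-3$ we may assume throughout that $d\in\l$.

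With $d\in\l$ and $\l_t=2n-4$, Proposition~\ref{prop:d3} leaves only the two subcases $h=j$ and $h=j+1$. The subcase $h=j$ forces $d=3$, which is impossible here because $n\geq 11$ gives $n-3>3$; so every candidate has $h=j+1$, i.e.\ exactly three parts in the free area, with $(a_{h-2},a_{h-1},a_h)$ equal to one of the four triples in Eq.~\eqref{eq:freearea2n-4}.

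Next I would dispose of the case $h=3$, in which all three removed parts lie in the free area. Imposing $\su$, that is $a_1+a_2+a_3=2n-4+d=3n-7$, singles out the unique triple $(n-4,n-2,n-1)$ among the four in Eq.~\eqref{eq:freearea2n-4}, the remaining triples giving $d\in\{n-5,n-1,n+1\}$, all different from $n-3$. This reproduces exactly the partition $(n-3,2n-4)\setminus(n-4,n-2,n-1)$, which is unrefinable for $n\geq 11$. Thus the case $h=3$ contributes precisely one partition.

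Finally, the crux is to rule out $h\geq 4$. For $h=4$ the four free-area choices produce the explicit families listed in Section~\ref{sec:2nminus4}, subject respectively to the constraints $d<n-5$, $d<n-5$, $d<n-7$ and $d<n-9$, each ensuring the resulting sequence is a genuine partition (in particular $a_1<a_2$). Since $d=n-3$ violates all four, no $h=4$ partition exists; for $h\geq 5$ the analogous constraint $d<n-(h^2-h-9)$, together with its counterparts for the other three triples, is even more restrictive, so again no partition arises. The main obstacle is exactly this $h=4$ bookkeeping: the two ``upper'' triples impose the binding bound $d<n-5$, which only just excludes $d=n-3$, and one must verify that none of the four families slips through. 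Combining the single $h=3$ partition with the emptiness of all $h\geq 4$ cases gives $\#\mup{T_{n,n-3}}=1$.
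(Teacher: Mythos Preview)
Your argument is correct and follows the same route as the paper: the corollary is obtained by reading off the case analysis summarized in Table~\ref{table1}, and your proof simply spells this out explicitly, showing that for $d=n-3$ only the $h=3$ triple $(n-4,n-2,n-1)$ survives while every $h\geq 4$ family is excluded by the listed inequalities on $d$.
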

\begin{corollary}
$\#\mup{T_{n,n-5}}=2.$
\end{corollary}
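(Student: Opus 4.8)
The plan is to first determine the largest possible value of $\l_t$ for an unrefinable partition of $T_{n,n-5}$, and then to enumerate exhaustively the partitions attaining it, reusing the machinery already developed for $\l_t = 2n-4$. Since $n \geq 11$, the distance $d = n-5$ satisfies $d \geq 6$, so $d \notin \{1,2\}$; hence by Proposition~\ref{prop:d1} and Proposition~\ref{prop:d2} no $\l \in \up_{T_{n,n-5}}$ has $\l_t \in \{2n-3,2n-2\}$, while Proposition~\ref{prop:maxlt} gives $\l_t \leq 2n-2$ in general, so in fact $\l_t \leq 2n-4$ on all of $\up_{T_{n,n-5}}$. Because the proof of Proposition~\ref{prop:2n-4dno0} exhibits the unrefinable partition $(n+1,2n-4)\setminus(n-2,n-1,n)$ of $T_{n,n-5}$, whose largest part is $2n-4$, this bound is attained, and therefore $\mup{T_{n,n-5}}$ is exactly the set of unrefinable partitions of $T_{n,n-5}$ with $\l_t = 2n-4$.

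I would then count these, splitting on whether $d = n-5$ belongs to $\l$. If $d \notin \l$, Proposition~\ref{prop:2n-4dno0} gives precisely one such partition, namely $(n+1,2n-4)\setminus(n-2,n-1,n)$; this is exactly the extra contribution that makes the count $2$ rather than $1$, the point being that among the unrefinable partitions with $d \notin \l$ and $\l_t = 2n-4$ the only admissible distance is $d = n-5$, so for $d = n-1$ and $d = n-3$ there is no such contribution. If $d \in \l$, then by Remark~\ref{rmk1} we have $h \in \{j,j+1\}$; the case $h = j$ is impossible since by Proposition~\ref{prop:d3} it forces $d = 3 \neq n-5$. In the remaining case $h = j+1$ the partition removes exactly three parts from the free area $\{n-4,\dots,n\}$ and $j-2$ parts from the first area; applying the constraint $\su$ together with the maximizing/minimizing bookkeeping used throughout Section~\ref{sec:bounds} (so that the first-area removed part $a_1$ would be forced to equal $n-5 = d$, which is not a part of $\pi_{n,n-5}$) rules out every $j \geq 3$, leaving $j=2$ and hence $h=3$; then $\su$ forces the three free-area parts to sum to $(2n-4)+(n-5)=3n-9$, which singles out the triple $(n-4,n-3,n-2)$ and the partition $(n-5,2n-4)\setminus(n-4,n-3,n-2)$, already verified unrefinable in Section~\ref{sec:2nminus4}. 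Adding the two contributions yields $\#\mup{T_{n,n-5}} = 1 + 1 = 2$.

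The delicate point --- which I expect to be the main obstacle --- is discarding all partitions with $d \in \l$, $h = j+1$ and $h \geq 4$: for each of the four admissible triples of free-area removed parts one must check that the threshold for the first-area part $a_1$ to be admissible (equivalently $a_1 < a_2$ together with $a_1 \neq d$) is an inequality of the form $d < n - c$ with $c \geq 5$, which always excludes $d = n-5$. These are exactly the computations already carried out in the proof of Proposition~\ref{prop:d3} and in the case analysis of Section~\ref{sec:2nminus4}, and are recorded in the $d = n-5$ rows of Table~\ref{table1}; granting them, the corollary follows at once.
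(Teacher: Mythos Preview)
Your proposal is correct and follows essentially the same route as the paper: the corollary is obtained there simply by reading off Table~\ref{table1}, whose rows with $d=n-5$ are exactly the two partitions $(n-5,2n-4)\setminus(n-4,n-3,n-2)$ and $(n+1,2n-4)\setminus(n-2,n-1,n)$, and your argument unpacks precisely the case analysis (Propositions~\ref{prop:d1}--\ref{prop:d3} and Section~\ref{sec:2nminus4}) that produces that table. One small wording issue: the parenthetical ``$a_1$ would be forced to equal $n-5=d$'' literally applies only to the free-area triple $(n-2,n-1,n)$; for the other three admissible triples at $h=4$ one instead gets $a_1>n-5$, which pushes $a_1$ into the free area and violates unrefinability---but this is exactly what your final paragraph (and the thresholds $d<n-c$ with $c\ge 5$ from Table~\ref{table1}) covers, so the conclusion stands.
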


We are now ready to address the remaining cases, i.e., to  compute explicitly the  number of partitions $\#\mup{T_{n,d}}$ when $3\leq d \leq n-7$ and $d=n-(2k-1)$. Notice that, by  Proposition~\ref{prop:2n-4dno0} and Proposition~\ref{prop:d3}, we know that $d\in\l$ and that the partition is uniquely determined when we are given $n$, $d$ and the elements $a_1,a_2,\ldots,a_h$ to be removed. Moreover, from Eq.~\eqref{eq:freearea2n-4} we have four possible choices for the three elements in the free area which are symmetric with respect to $n-2$, therefore the partitions are determined by the list of the $a_i$s which are smaller than or equal to $n-3$. Only one partition is exceptional with respect to this representation, i.e., the partition 
\[
\pi \deq (3,2n-4)\setminus (n-1,n)= (1,2,\dots, n-2,2n-4).
\]

\begin{definition}\label{ustar2nminus4}
Let  $d=n-(2k-1)$ with $3\leq d \leq n-7$.
Let us define the set of missing parts, for each $\l\in\mup{T_{n,d}}$, which are  smaller than or equal to $n-3$:
\[
\mup{T_{n,d}}^*\deq\{\eta=(\eta_1,\eta_2,\ldots,\eta_s) \mid s \geq 0, \eta_i\in\Ml, \l\in\mup{T_{n,d}}, \eta_i\leq n-3\}.
\]
Notice that $\pi$ corresponds to the empty partition $()\in \mup{T_{n,3}}^*$ obtained for $s=0$.
\end{definition}
From the previous argument,  $\mup{T_{n,d}}$ is in one-to-one correspondence with $\mup{T_{n,d}}^*$. In order to prove the claimed bijection, let us  introduce a partition of the set $\mup{T_{n,d}}^*$ which is convenient for our  purposes.
 
 \begin{definition}\label{def:partitionofU*}
 Let $n$, $d$ and $h$ be positive integers. Let us define
\begin{align*}
\mA_{n,d,h}\deq&\left\{\eta\in\mup{T_{n,d}}^*\mid|\eta|=h-3, \eta_{h-3}\leq n-5\right\},\\
\mB_{n,d,h}\deq&\left\{\eta\in\mup{T_{n,d}}^*\mid|\eta|=h-2, n_{h-3}\leq n-5, \eta_{h-2}=n-3 \right\},\\
\mC_{n,d,h}\deq&\left\{\eta\in\mup{T_{n,d}}^*\mid|\eta|=h-2, \eta_{h-3}\leq n-5, \eta_{h-2}=n-4\right\},\\
\mD_{n,d,h}\deq&\left\{\eta\in\mup{T_{n,d}}^*\mid|\eta|=h-1, \eta_{h-2}=n-4, \eta_{h-1}=n-3\right\},
\end{align*}
and
\begin{align*}
\mA_{n,d}\deq&\bigcup_{h\geq4}\mA_{n,d,h}, &\mB_{n,d}\deq&\bigcup_{h\geq4}\mB_{n,d,h},\\
\mC_{n,d}\deq&\bigcup_{h\geq4}\mC_{n,d,h}, &\mD_{n,d}\deq&\bigcup_{h\geq4}\mD_{n,d,h}.
\end{align*}
Reading Table~\ref{table1}, we can note that  
\begin{equation}\label{eq:ud4d3}
\mup{T_{n,d}}^*=
\begin{cases}
\mA_{n,d}\,\dot\cup\,\mB_{n,d}\,\dot\cup\,\mC_{n,d}\,\dot\cup\,\mD_{n,d} &\quad 4 \leq d \leq n-7,\\
\{()\}  \,\dot\cup\, \bigcup_{h\geq5}\mA_{n,d,h} \,\dot\cup\,  \mB_{n,d}\,\dot\cup\,\mC_{n,d}\,\dot\cup\,\mD_{n,d} &\quad d=3.\\

\end{cases}
\end{equation}
 \end{definition}

 Analogously, let us now  introduce a  convenient partition of $\dist_{r}$, that we will prove to be related with that of Definition~\ref{def:partitionofU*}.
\begin{definition}\label{def:partDr}
Let $r$ and $s$ be  positive integers. Let us define
\begin{align*}
\mA_{r,s}^*\deq&\left\{\l\in\mathbb{D}_{r,s}\mid \l_1\geq3\right\},\\
\mB_{r,s}^*\deq&\left\{\l\in\mathbb{D}_{r,s}\mid\l_1=1, \l_2\geq 3\right\},\\
\mC_{r,s}^*\deq&\left\{\l\in\mathbb{D}_{r,s}\mid \l_1=2\right\},\\
\mD_{r,s}^*\deq&\left\{\l\in\mathbb{D}_{r,s}\mid \l_1=1, \l_2=2\right\}.
\end{align*}
It is clear that
\begin{equation}\label{eq:dist}
\mathbb{D}_r=\bigcup_{s\geq2}\mA_{r,s}^*\dot\cup\bigcup_{s\geq2}\mB_{r,s}^*\dot\cup\bigcup_{s\geq2}\mC_{r,s}^*\dot\cup\bigcup_{s\geq3}\mD_{r,s}^*.
\end{equation}
\end{definition}

Finally, let us define the following correspondence from $\mup{T_{n,d}}^*\setminus \left\{ \mA_{n,d,4} \cup\{() \}\right\}$ to $\dist$. We will discuss later how to extend the values of the function on the partitions of $\left\{ \mA_{n,d,4} \cup\{() \}\right\}$. 
\begin{definition}\label{def:phi}
Let us denote
\[
\begin{array}{rrcl}
\phi:&\mup{T_{n,d}}^* \setminus \left\{ \mA_{n,d,4} \cup\{() \}\right\} & \longrightarrow & \dist\\
&(\eta_1,\eta_2,\ldots,\eta_t)&\longmapsto&(n-2-\eta_t,\ldots,n-2-\eta_2,n-2-\eta_1).
\end{array}
\]
\end{definition}

The two following propositions will be used in Theorem~\ref{thm:maindodd} to prove the part of our main statement, introduced in Sec.~\ref{sec:contri}, concerning the case $\l_t=2n-4$.
\begin{proposition}\label{prop:dispA}
Let $d=n-(2k-1)$ such that $3\leq d\leq n-7$ and $h \geq 5$. Then $\phi$ sends bijectively $\mA_{n,d,h}$ into $\mA_{k,h-3}^*$.
\end{proposition}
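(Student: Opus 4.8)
The strategy is to unwind both sides of the claimed bijection to a concrete parametric description and then check that $\phi$ restricts to a bijection between the two. First I would use Proposition~\ref{prop:d3} and the computations preceding Proposition~\ref{prop:dispA} (the case $h\geq 5$ in the analysis of $\l_t=2n-4$, with $d\in\l$) to obtain an explicit description of an element of $\mA_{n,d,h}$: such $\eta$ has $|\eta|=h-3$, its largest part is $\eta_{h-3}\leq n-5$, and choosing $(a_{h-2},a_{h-1},a_h)=(n-2,n-1,n)$ in the free area forces $a_1=\bigl(n+(h^2-3h-9)+d\bigr)/2$ together with the remaining $a_i$'s ranging over all admissible strictly increasing sequences with $a_1 < a_2 < \dots < a_{h-3}\leq n-5$, subject to $\su$. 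Crucially, the $\eta_i$'s are exactly these $a_i$'s that are $\leq n-3$, i.e.\ $\eta_i = a_{i}$ for $1\le i\le h-3$ (the three free-area removals $n-2,n-1,n$ are not recorded in $\eta$). So $\mA_{n,d,h}$ is in bijection with the set of strictly increasing tuples $(a_1,\dots,a_{h-3})$ with $1\le a_1$, $a_{h-3}\le n-5$, and $\sum_{i=1}^{h-3} a_i = C$ for the constant $C=\bigl((h-2)(2n-4)-(3n-3)+d\bigr)/2$ dictated by $\su$.

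Next I would apply $\phi$, which sends $(\eta_1,\dots,\eta_{h-3})$ to $(n-2-\eta_{h-3},\dots,n-2-\eta_1)$. Since $1\le \eta_1 < \dots < \eta_{h-3}\le n-5$, the images satisfy $3 \le n-2-\eta_{h-3} < \dots < n-2-\eta_1 \le n-3$; in particular the smallest part of $\phi(\eta)$ is $\geq 3$, so $\phi(\eta)\in \mA^*_{r,h-3}$ for $r = \sum_{i=1}^{h-3}(n-2-\eta_i) = (h-3)(n-2) - C$. The arithmetic reduces $r$ to $k$: substituting $d=n-(2k-1)$ and simplifying $(h-3)(n-2)-\bigl((h-2)(2n-4)-(3n-3)+d\bigr)/2$ should collapse to exactly $k$, independent of $h$ (this is the ``magic'' that makes the statement work, and is the one place where I would actually carry out the computation carefully). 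Conversely, $\phi$ is clearly injective (it is an affine reversal on strictly-increasing tuples), and any $\mu\in\mA^*_{k,h-3}$ pulls back to $\eta_i = n-2-\mu_{h-2-i}$, which automatically satisfies $1\le\eta_1<\dots<\eta_{h-3}\le n-5$ because $3\le\mu_1<\dots<\mu_{h-3}$ and $|\mu|=h-3$ forces $\mu_{h-3}\le k + \binom{h-3}{2}$ small enough; one then checks $\sum\eta_i = C$ and that the resulting partition is genuinely unrefinable (the condition $a_1+a_2>\al_1$ noted in the text guarantees this, and it is stable under the pullback since $\phi$ is just relabelling the same removed parts).

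The main obstacle I anticipate is twofold. First, bookkeeping: one must be scrupulous that $\eta$ records precisely the removed parts $\le n-3$ and that, for the fixed free-area choice $(n-2,n-1,n)$, there are exactly $h-3$ such parts and they coincide with $a_1,\dots,a_{h-3}$ — this is where an off-by-one in $h$ could creep in. Second, and more substantively, one must verify that \emph{every} admissible $(a_1,\dots,a_{h-3})$ with the stated sum and range actually yields an \emph{unrefinable} (hence maximal) partition, so that the set on the source side is genuinely all of $\mA_{n,d,h}$ and not a proper subset; the paper's earlier remark that $a_1+a_2>\al_1$ (because $a_1$ is close to $n/2$ and $\al_1=2n-4-a_1$ is close to $3n/2$) handles the only dangerous refinement, but the argument should be stated explicitly. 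Once these two points are nailed down, the bijection and the reduction of the sum-parameter to $k$ finish the proof.
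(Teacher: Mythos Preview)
Your proposal is correct and follows essentially the same approach as the paper: the paper also parametrises $\mA_{n,d,h}$ explicitly (via Table~\ref{table1}), applies $\phi$, verifies that the image lies in $\mA^*_{k,h-3}$ by checking the smallest part is $\geq 3$ and the sum is $k$, notes injectivity trivially, and establishes surjectivity by writing a generic element of $\mA^*_{k,h-3}$ and pulling it back to the Table~\ref{table1} form. Your global sum computation $(h-3)(n-2)-C=k$ is the same arithmetic the paper performs coordinate-wise, and the unrefinability concern you flag for the pullback is in the paper absorbed into the appeal to Table~\ref{table1} (one small slip: your bound should read $\mu_{h-3}\leq k$, not $k+\binom{h-3}{2}$, which then gives $\eta_1\geq 1$ since $k\leq (n-2)/2$).
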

\begin{proof}
Let us start by proving that the correspondence is well defined, i.e., if $\eta\in \mA_{n,n-(2k-1),h}$, then $\phi(\eta)\in \mA_{k,h-3}^*$. 
Let $\eta \in \mA_{n,n-(2k-1),h}$. Then, by Table~\ref{table1}, 
\[
\eta=\left(\frac{n+\left(h^2-3h-9\right)+d}{2}+i,n-h-i_1,\dots,n-5-i_{h-4}\right),
\]
for some positive integers $i,i_1\geq i_2\geq\ldots \geq i_{h-4}$ such that $i=\sum_{j=1}^{h-4}i_j$. By definition of $\phi$ we have
\[
\phi(\eta)=\left(3+i_{h-4},4+i_{h-5},\dots,h-2+i_1,k+\frac{-h^2+3h+4}{2}-i\right).
\]
Notice that $|\phi(\eta)|=h-3$, $\phi(\eta)_1\geq 3$ and that $\phi(\eta)\vdash k$. Therefore $\phi(\eta)\in\mA_{k,h-3}^*$.

Notice also that $\phi$ is trivially injective and so, in order to conclude the proof, it remains to prove that $\phi$ is surjective from $\mA_{n,n-(2k-1),h}$ to $\mA_{k,h-3}^*$. For this purpose, let $\rho=(\rho_1,\rho_2,\ldots,\rho_{h-3})\in\mA_{k,h-3}^*$. Then the general expression for such $\rho$ is 
\[
\rho=\left(3+i_{1},4+i_{2},\dots,h-2+i_{h-4},k+\frac{-h^2+3h+4}{2}-i\right),
\] 
for some positive integers $i,i_1\leq i_2 \leq \ldots \leq i_{h-4}$ such that $i=\sum_{j=1}^{h-4}i_j$. It is easy to see that
\[
\eta\deq(n-2-\rho_{h-3},\ldots,n-2-\rho_2,n-2-\rho_1)
\]
is such that $\phi(\eta)=\rho$. We need to prove that $\eta\in\mA_{n,n-(2k-1),h}$. We have 
\begin{align*}
\eta=&\left(n-2-\left(k+\frac{-h^2+3h+4}{2}-i\right),n-2-(h-2+i_{h-4}),\ldots,n-2-(3+i_1)\right)\\
=&\left(\frac{n+d+h^2-3h-9}{2}+1,n-h-i_{h-4},\ldots,n-5-i_1\right),
\end{align*}
which, from Table~\ref{table1}, is exactly the generic form of a partition in $\mA_{n,n-(2k-1),h}$.
\end{proof}
Similar computations lead to the corresponding results for $\mB^*$, $\mC^*$ and $\mD^*$. Precisely:
\begin{proposition}\label{prop:dispBCD}
Let $d=n-(2k-1)$ such that $3\leq d\leq n-7$ and $h \geq 5$. Then $\phi$ sends bijectively 
\begin{enumerate}
\item $\mB_{n,n-(2k-1),h}$ into $\mB_{k,h-2}^*$,
\item $\mC_{n,n-(2k-1),h}$ into $\mC_{k,h-2}^*$, 
\item $\mD_{n,n-(2k-1),h}$ into $\mD_{k,h-1}^*$.
\end{enumerate}
Moreover, in the case $h=4$ we have
\begin{enumerate}
\item $\mB_{n,n-\left(2k-1\right),4}\overset{\phi}{\longleftrightarrow}\{(1,k-1)\}=\mB_{k,2}^*,$
\item $\mC_{n,n-\left(2k-1\right),4}\overset{\phi}{\longleftrightarrow}\{(2,k-2)\}=\mC_{k,2}^*,$
\item $\mD_{n,n-\left(2k-1\right),4}\overset{\phi}{\longleftrightarrow}\{(1,2,k-3)\}=\mD_{k,3}^*.$
\end{enumerate}
\end{proposition}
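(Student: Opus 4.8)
\textbf{Proof plan for Proposition~\ref{prop:dispBCD}.}

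The plan is to imitate verbatim the argument of Proposition~\ref{prop:dispA}, treating the three families $\mB$, $\mC$, $\mD$ in parallel and only adjusting the arithmetic to account for the different configurations in the free area. For the case $h \geq 5$, the key point is that each of $\mB_{n,d,h}$, $\mC_{n,d,h}$, $\mD_{n,d,h}$ has a generic form read off from Table~\ref{table1}: a partition $\eta$ whose first entry is the value of $a_1$ determined by the relevant maximal choice in the free area, followed by the shifted block $n-h-i_1, \dots, n-5-i_{h-4}$ (or the appropriate length-shifted variant), and then the one or two fixed tail entries $n-4$, $n-3$ coming from the chosen free-area triple. First I would write down this generic form explicitly in each of the three cases, exactly as was done for $\mA$. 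Then I would apply the map $\phi \colon (\eta_1,\dots,\eta_t) \mapsto (n-2-\eta_t,\dots,n-2-\eta_1)$ and check three things: that $|\phi(\eta)|$ equals $h-2$ (resp. $h-2$, $h-1$), that $\phi(\eta) \vdash k$, and that the small end of $\phi(\eta)$ has the shape characterising $\mB^*_{k,h-2}$, $\mC^*_{k,h-2}$, $\mD^*_{k,h-1}$ — namely $\phi(\eta)_1 = 1$ and $\phi(\eta)_2 \geq 3$ (because $n-2-\eta_{h-2} = n-2-(n-3) = 1$ and $n-2-\eta_{h-3} \geq 3$), respectively $\phi(\eta)_1 = 2$ (from $n-2-(n-4)$), respectively $\phi(\eta)_1 = 1$ and $\phi(\eta)_2 = 2$. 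This is the ``well defined'' half. Injectivity of $\phi$ is immediate since $\phi$ is just an order-reversing affine relabelling. For surjectivity I would reverse the computation: take $\rho$ in the target set, write its generic form, set $\eta \deq (n-2-\rho_{\text{last}}, \dots, n-2-\rho_1)$, verify $\phi(\eta) = \rho$, and check that the resulting $\eta$ matches the generic form of the corresponding $\mB_{n,d,h}$, $\mC_{n,d,h}$, $\mD_{n,d,h}$ as listed in Table~\ref{table1}; this last verification is the same bit of algebra run backwards, using $d = n-(2k-1)$ to convert between $n$ and $k$.

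For the case $h = 4$, the three families degenerate to singletons. Here $\mB_{n,d,4}$ consists (by Definition~\ref{def:partitionofU*}) of the single partition $\eta$ with $|\eta| = 2$, $\eta_1 \leq n-5$, $\eta_2 = n-3$; from the $h=4$ entry of Table~\ref{table1} one reads $\eta_1 = (n-5+d)/2$, and then $\phi(\eta) = (n-2-(n-3), n-2-(n-5+d)/2) = (1, (n-1-d)/2) = (1, k-1)$ after substituting $d = n-(2k-1)$, which is exactly the unique element of $\mB^*_{k,2}$ since $1 + (k-1) = k$. Identical substitutions give $\mC_{n,d,4} \leftrightarrow \{(2, k-2)\}$ (from $\eta = ((n-3+d)/2, n-4)$, so $\phi(\eta) = (2, (n-1-d)/2) = (2,k-2)$) and $\mD_{n,d,4} \leftrightarrow \{(1,2,k-3)\}$ (from $\eta = ((n-1+d)/2, n-4, n-3)$, giving $\phi(\eta) = (1,2,(n-3-d)/2) = (1,2,k-3)$). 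In each case one must also note the partition really lies in $\dist$, i.e. has at least two parts and positive entries, which holds because $3 \leq d \leq n-7$ forces $k \geq 5$.

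I expect no genuine obstacle: the whole proposition is a bookkeeping exercise, and the only thing requiring care is making sure the constraints on $i, i_1 \geq \dots \geq i_{h-4}$ (ordering and $\sum i_j = i$) transform correctly under $\phi$ into the constraints $i_1 \leq \dots \leq i_{h-4}$ on the image side — the order reversal built into $\phi$ is precisely what swaps ``$\geq$'' for ``$\leq$'', just as in Proposition~\ref{prop:dispA}. The mildly delicate part is confirming that the tail constraints defining $\mB, \mC, \mD$ (e.g. $\eta_{h-3} \leq n-5$ versus $\eta_{h-2} = n-3$) are genuinely disjoint and jointly exhaustive among the four free-area triples of Eq.~\eqref{eq:freearea2n-4}, but that was already recorded in Eq.~\eqref{eq:ud4d3}, so I would simply cite it. Hence the proof is: (i) state the generic forms from Table~\ref{table1}; (ii) compute $\phi$ forwards to get ``well defined''; (iii) observe injectivity; (iv) compute $\phi$ backwards for surjectivity; (v) handle $h=4$ by direct substitution — and then write ``Similar computations to those in the proof of Proposition~\ref{prop:dispA}'' to compress the repetitive algebra.
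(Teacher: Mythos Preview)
Your proposal is correct and follows exactly the approach the paper intends: the paper gives no separate proof for Proposition~\ref{prop:dispBCD} beyond the sentence ``Similar computations lead to the corresponding results for $\mB^*$, $\mC^*$ and $\mD^*$,'' and your plan is precisely to carry out those parallel computations from Table~\ref{table1}. One caution: in your $h=4$ paragraph the values of $\eta_1$ you quote from Table~\ref{table1} are each shifted by one row (for $\mB$ the correct value is $(n-3+d)/2$, for $\mC$ it is $(n-1+d)/2$, for $\mD$ it is $(n+1+d)/2$); your final images $(1,k-1)$, $(2,k-2)$, $(1,2,k-3)$ are nonetheless right, so this is a transcription slip rather than a gap, but fix it before writing the argument out in full.
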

\begin{remark}\label{rmk:disp4}
Notice that $\mA_{n,d,4}$ contains only the (trivial) partition $\tilde{\eta}\deq (\frac{n-5+d}{2})$, and so, extending the function $\phi$ on  $\mA_{n,d,4}$ implies to consider the (trivial) partition of $k$ in a single part, i.e., $\phi(\tilde{\eta}) = (k)$. Similarly, with an abuse of notation we can assume $() \overset{\phi}{\mapsto} ()$.  This extends the definition  of $\phi$ also on $\left\{ \mA_{n,d,4} \cup\{() \}\right\}$, making the function defined on the whole set $\mup{T_{n,d}}^*$.
\end{remark}
We can finally summarize the above results. From Proposition~\ref{prop:dispA}, Proposition~\ref{prop:dispBCD}, Remark~\ref{rmk:disp4} and from Eq.~\eqref{eq:dist} we obtain:
\begin{theorem}\label{thm:maindodd}
Let $d=n-(2k-1)$ be such that $3\leq d\leq n-7$. Then
\[
\#\mup{T_{n,n-(2k-1)}}=1+\#\dist_k.
\]
\end{theorem}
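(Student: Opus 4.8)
The plan is to transport the counting problem to the auxiliary set $\mup{T_{n,d}}^*$ and then glue together the block-by-block bijections of Proposition~\ref{prop:dispA} and Proposition~\ref{prop:dispBCD}, comparing the outcome with the decomposition of $\dist_k$ in Eq.~\eqref{eq:dist}. First I would recall the one-to-one correspondence between $\mup{T_{n,d}}$ and $\mup{T_{n,d}}^*$ noted just before Definition~\ref{def:partitionofU*}, so that it suffices to prove $\#\mup{T_{n,d}}^*=1+\#\dist_k$.

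Next I would split into the two regimes of Eq.~\eqref{eq:ud4d3}. For $4\le d\le n-7$ one has $\mup{T_{n,d}}^*=\mA_{n,d}\,\dot\cup\,\mB_{n,d}\,\dot\cup\,\mC_{n,d}\,\dot\cup\,\mD_{n,d}$, whereas for $d=3$ the block $\mA_{n,d,4}$ is empty and the singleton $\{()\}$ takes its place. In both cases the argument then amounts to pushing each block forward under $\phi$, extended as in Remark~\ref{rmk:disp4}.

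Then I would assemble the pieces. By Proposition~\ref{prop:dispA} the blocks $\mA_{n,d,h}$ with $h\ge5$ are sent bijectively onto $\mdA{k,h-3}$, and as $h$ runs through $\{5,6,\dots\}$ the length $h-3$ runs through $\{2,3,\dots\}$; adjoining $\mA_{n,d,4}\overset{\phi}{\mapsto}(k)$ (or, when $d=3$, $()\overset{\phi}{\mapsto}()$) shows that the image of the $\mathcal A$-block is $\bigcup_{s\ge2}\mdA{k,s}$ together with one leftover singleton. By Proposition~\ref{prop:dispBCD} the shift is $h-2$ for the $\mathcal B$- and $\mathcal C$-blocks, with the base case $h=4$ contributing $s=2$, so $\mB_{n,d}$ and $\mC_{n,d}$ map bijectively onto $\bigcup_{s\ge2}\mdB{k,s}$ and $\bigcup_{s\ge2}\mdC{k,s}$; the shift is $h-1$ for the $\mathcal D$-block, with base case $h=4$ contributing $s=3$, so $\mD_{n,d}$ maps bijectively onto $\bigcup_{s\ge3}\mdD{k,s}$. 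Comparing with Eq.~\eqref{eq:dist}, the four images are pairwise disjoint and their union is exactly $\dist_k$; the only element of the image of $\phi$ lying outside $\dist_k$ is the leftover singleton, namely $(k)$ if $d>3$ and $()$ if $d=3$, and it indeed lies outside $\dist_k$ because every partition in $\dist$ has at least two parts. Therefore $\phi$ is a bijection from $\mup{T_{n,d}}^*$ onto the disjoint union of $\dist_k$ with that single extra element, so $\#\mup{T_{n,d}}^*=1+\#\dist_k$, and carrying this back along the correspondence yields $\#\mup{T_{n,n-(2k-1)}}=1+\#\dist_k$.

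The one place needing care is the index bookkeeping: one has to check that the ranges of the families $\mdA{k,s}$, $\mdB{k,s}$, $\mdC{k,s}$, $\mdD{k,s}$ tile $\dist_k$ without overlap — in particular that the $\mathcal D$-family only reaches $s\ge3$ while the other three reach $s\ge2$, and that the base case $h=4$ is accounted for exactly once — and that the single leftover partition is genuinely not in $\dist_k$. No new computation is required, since the substantive content, namely that $\phi$ restricts to a bijection on each block, is already contained in Propositions~\ref{prop:dispA} and~\ref{prop:dispBCD}.
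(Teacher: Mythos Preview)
Your proposal is correct and follows essentially the same approach as the paper: both proofs pass to $\mup{T_{n,d}}^*$ via the correspondence noted before Definition~\ref{def:partitionofU*}, invoke the block decomposition of Eq.~\eqref{eq:ud4d3}, push each block forward under $\phi$ using Propositions~\ref{prop:dispA} and~\ref{prop:dispBCD} together with Remark~\ref{rmk:disp4}, and then match the resulting images against the decomposition of $\dist_k$ in Eq.~\eqref{eq:dist}, with the single leftover element $(k)$ or $()$ accounting for the ``$+1$''. Your write-up is in fact slightly more explicit than the paper's about the index ranges (checking that $h\mapsto h-3$, $h-2$, $h-2$, $h-1$ land exactly on $s\ge2,2,2,3$) and about why the leftover singleton is not already in $\dist_k$, but the argument is the same.
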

\begin{proof}
Let us assume first that $4\leq d\leq n-7$. Then, since the empty partition $()$ appears only in the case $d=3$ which is not considered, we have (cf.\ Eq.~\eqref{eq:ud4d3})
\begin{align*}
\mup{T_{n,d}} \leftrightarrow \mup{T_{n,d}}^*
&=\left(\bigcup_{h\geq4}\mA_{n,d,h}\right)\cup\left(\bigcup_{h\geq4}\mB_{n,d,h}\right)\cup\left(\bigcup_{h\geq4}\mC_{n,d,h}\right)\cup\left(\bigcup_{h\geq4}\mD_{n,d,h}\right)\\
&\overset{\phi}{\leftrightarrow} \{(k)\} \cup \left(\bigcup_{s\geq2}\mA_{k,s}^*\right)\cup\left(\bigcup_{s\geq2}\mB_{k,s}^*\right)\cup\left(\bigcup_{s\geq2}\mC_{k,s}^*\right)\cup\left(\bigcup_{s\geq3}\mD_{k,s}^*\right)\\
& = \{(k)\} \cup \mathbb D_k,
\end{align*}
from which we obtained the desired claim.
In the remaining case $d=3$, we proceed in the same way and, using the corresponding description of Eq.~\eqref{eq:ud4d3}, we obtain 
\[
\mup{T_{n,d}} \leftrightarrow \mup{T_{n,d}}^* \overset{\phi}{\leftrightarrow} \{()\} \cup \mathbb D_k.
\]
\end{proof}

\subsection{The case $\l_t=2n-5$}\label{sec:2nminus5} 
We can now count the number of maximal unrefinable partitions in the case of $\l_t=2n-5$. Using the same notation as in Sec.~\ref{sec:2nminus4}, we proceed with a similar argument, despite the increased combinatorial complexity of the problem. We have already proved that if $\l\in\mup{T_{n,d}}$, then $d=n-2k$, for $1\leq k \leq\lfloor(n-4)/2\rfloor$. Moreover,
we have already proved in Sec.~\ref{sec:descr2nminus5} that, when $h\leq 3$, only the following partitions belong to $\mup{T_{n,d}}$ (cf. the proof of Proposition~\ref{prop:2n-5dno0_2}):
\begin{itemize}
\item $\left(4,2n-5\right) \setminus \left(n-1,n\right)$ for $d=4$,
\item $\left(n-2,2n-5\right) \setminus \left(n-4,n-3,n\right)$ for $d=n-2$,
\item $\left(n-4,2n-5\right) \setminus \left(n-5,n-3,n-1\right)$ for $d=n-4$,
\item $\left(n-6,2n-5\right) \setminus \left(n-5,n-4,n-2\right)$ for $d=n-6$,
\item $\left(n+1,2n-5\right) \setminus \left(n-3,n-1,n\right)$ for $d=n-6$ (cf. Proposition~\ref{prop:2n-5dno0}).
\end{itemize}
For $h\geq 4$ we have the following eight choices for the free area, i.e.,
\begin{equation*}
  \left(a_{h-2},a_{h-1},a_h\right)\in\left\{ 
    \begin{aligned} 
   & \left(n-5,n-4,n-3\right),  \left(n-5,n-4,n-2\right),\\
    &\left(n-5,n-3,n-1\right), \left(n-5,n-2,n-1\right),\\
    &\left(n-4,n-3,n\right),  \left(n-4,n-2,n\right),\\
    &\left(n-3,n-1,n\right),  \left(n-2,n-1,n\right)
    \end{aligned}
    \right\}.
\end{equation*}
Also in this case, we only consider the case $d \in \l$ (cf. Proposition~\ref{prop:2n-5dno0}), indeed the only maximal unrefinable partition with $\l_t=2n-5$, obtained assuming $d \notin \l$ is the last of the previous list.

Let $h=4$. Since $n$ is even if and only if $d$ is even, and since, from $\su$, we can calculate
\[
a_1=\frac{2\left(2n-5\right)+d-\left(a_2+a_3+a_4\right)}{2}, 
\]
then $a_2+a_3+a_4$ is even if and only if $n$ is even. Therefore, the only possible choices compliant with the previous requirement are
\[
\left(a_2,a_3,a_4\right)=\left\{ (n-5,n-4,n-3), (n-5,n-2,n-1), (n-4,n-2,n), (n-3,n-1,n) \right\}.
\]
We obtain the partitions
\begin{itemize}
\item $\left(d,{(3n-4-d)}/{2},2n-5\right) \setminus \left({(n-6+d)}/{2},n-3,n-1,n\right)$, with $d<n-6$, otherwise $a_1\geq a_2$, and $d\ne 4$, otherwise $a_1+a_2=\al_2$;
\item $\left(d,{(3n-6-d)}/{2},2n-5\right) \setminus \left({(n-4+d)}/{2},n-4,n-2,n\right)$, with $d<n-4$;
\item $\left(d,{(3n-8-d)}/{2},2n-5\right) \setminus \left({(n-2+d)}/{2},n-5,n-2,n-1\right)$, with $d<n-8$;
\item $\left(d,{(3n-12-d)}/{2},2n-5\right) \setminus \left({(n+2+d)}/{2},n-5,n-4,n-3\right)$, with $d<n-12$.
\end{itemize} 

If $h\geq 5$, we need to distinguish the two cases $h$ odd and $h$ even, as already observed at the end of Proposition~\ref{prop:2n-5dno0_2}. The only difference between the two cases  is in the triple $(a_{h-2},a_{h-1},a_{h})$ to be chosen in the free area. Let $h\geq 5$, $h$ odd. We have
\begin{equation*}
a_1=\frac{n+\left(h^2-2h-15\right)+d}{2}
\end{equation*}
obtained from the maximal choice for $a_2+a_3+\cdots +a_{h-3}$ and from $a_{h-2}=n-2$, $a_{h-1}=n-1$ and $a_h=n$, which is also the maximal choice in the free area, and we obtain the partition
\begin{align*}
&\left(d,n+1,\dots,n+h-4,\frac{3n-\left(h^2-2h-5\right)-d}{2}\right)\setminus\\
& \left(\frac{n+\left(h^2-2h-15\right)+d}{2},n-h-1,\dots,n-6,n-2,n-1,n\right)
\end{align*}
with $d\leq n-\left(h^2-11\right)$.
Notice that $a_1+a_2 > \al_1$, therefore the obtained partition is unrefinable.
The remaining cases for $h$ are treated analogously.

All the other partitions, obtained for the remaining possibilities for $a_2+a_3+\cdots +a_{h-3}$, are obtained by replacing $(a_1,a_2,\ldots,a_{h-3})$ with $(a_1+i,a_2-i_1,\ldots,a_{h-3}-i_{h-4})$, where $i=\sum_{r=1}^{h-4}i_r$ and such that $a_1+i < a_2-i_1 < \dots < a_{h-3}-i_{h-4}$.

We proceed similarly for the other seven choices in the free area. All the results are summarized in Table~\ref{table2} (displayed at the end of the paper), and the following consequences are easily noted.

\begin{corollary}
$\#\mup{T_{n,n-2}}=1$.
\end{corollary}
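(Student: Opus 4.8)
The plan is to prove $\#\mup{T_{n,n-2}}=1$ by first pinning down $\l_t$ for every $\l\in\mup{T_{n,n-2}}$ and then reading off, from the enumeration already carried out in this subsection, that exactly one admissible construction survives when $d=n-2$.

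First I would show that every $\l\in\mup{T_{n,n-2}}$ has $\l_t=2n-5$. Since here $n-d=2$ is even, the larger candidate values are excluded one by one: by Proposition~\ref{prop:maxlt} the cases $\l_t\in\{2n-2,2n-3,2n-4\}$ force $d\in\l$, and then Proposition~\ref{prop:d1}, Proposition~\ref{prop:d2} and Proposition~\ref{prop:d3} give $d=1$, $d=2$ or $d\in\{3\}\cup\{n-(2k-1)\}$ respectively, none of which equals $n-2$ for $n\geq 11$; while $d\notin\l$ with $\l_t=2n-4$ forces $d=n-5$ by Proposition~\ref{prop:2n-4dno0}. On the other hand the explicit partition $(n-2,2n-5)\setminus(n-4,n-3,n)=(1,2,\dots,n-5,n-2,n-1,2n-5)$ listed at the start of this subsection is unrefinable with $\l_t=2n-5$, so $2n-5$ is the maximum and $\mup{T_{n,n-2}}=\{\l\in\up_{T_{n,n-2}}\mid\l_t=2n-5\}$. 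Finally $d=n-2\neq n-6$, so Proposition~\ref{prop:2n-5dno0} rules out $d\notin\l$; hence $d\in\l$ for every such $\l$.

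It then remains to count these partitions, which the case analysis of this subsection does for us. For $h\leq 3$ the displayed list contains exactly one partition with $d=n-2$, namely $(n-2,2n-5)\setminus(n-4,n-3,n)$. For $h=4$ each of the four surviving constructions carries a constraint $d<n-6$, $d<n-6$, $d<n-8$ or $d<n-12$, all incompatible with $d=n-2$; and for $h\geq 5$ the corresponding constraint tightens (to $d\leq n-(h^2-11)\leq n-14$ for $h=5$, and analogously for the other free-area triples), so again no construction yields $d=n-2$. Since the partitions obtained by the substitution $(a_1,a_2,\dots,a_{h-3})\mapsto(a_1+i,a_2-i_1,\dots,a_{h-3}-i_{h-4})$ exist only when the corresponding base partition already exists, they contribute nothing either. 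Hence $\mup{T_{n,n-2}}$ has a single element.

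The only delicate point is the completeness of the bookkeeping in the second step: one must know that the enumeration of the $h\geq 4$ constructions in this subsection (summarised in Table~\ref{table2}) is exhaustive, i.e.\ that every maximal unrefinable partition with $\l_t=2n-5$ and $d\in\l$ arises from one of the listed free-area triples together with its stated $d$-constraint. Granting that — which is precisely what Proposition~\ref{prop:2n-5dno0_2} and the preceding computations establish — the corollary follows at once, with no genuine obstacle remaining.
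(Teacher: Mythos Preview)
Your argument is correct and follows exactly the paper's implicit approach: the corollary is simply read off from the enumeration summarised in Table~\ref{table2}, once the propositions of Sec.~\ref{sec:bounds} have pinned $\l_t$ down to $2n-5$ for $d=n-2$. One small wording slip: Proposition~\ref{prop:maxlt} only forces $d\in\l$ when $\l_t\in\{2n-2,2n-3\}$, not for $\l_t=2n-4$; but since you handle the case $\l_t=2n-4$, $d\notin\l$ separately via Proposition~\ref{prop:2n-4dno0}, the argument is complete.
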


\begin{corollary}
$\#\mup{T_{n,n-4}}=1$.
\end{corollary}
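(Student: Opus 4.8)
The plan is to determine $\mup{T_{n,n-4}}$ by first fixing the value of $\l_t$ and then enumerating the corresponding configurations. Since $n-(n-4)=4$ is even, I would first show that every $\l\in\mup{T_{n,n-4}}$ has $\l_t=2n-5$: the values $\l_t\in\{2n-2,2n-3\}$ force $d\in\{1,2\}$ by Proposition~\ref{prop:d1} and Proposition~\ref{prop:d2}, which is impossible for $n\geq 11$; and $\l_t=2n-4$ is ruled out because, by Proposition~\ref{prop:2n-4dno0}, $d\notin\l$ would force $d=n-5\neq n-4$, while by Proposition~\ref{prop:d3}, $d\in\l$ would force either $d=3$ or $n-d$ odd, again impossible. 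On the other hand Proposition~\ref{prop:2n-5dno0_2} (case $h=j+1$, $k=2$) exhibits at least one $\l\in\mup{T_{n,n-4}}$ with $\l_t=2n-5$, so the maximal largest part for $T_{n,n-4}$ is exactly $2n-5$, and $\mup{T_{n,n-4}}$ consists precisely of the unrefinable partitions of $T_{n,n-4}$ with $\l_t=2n-5$.

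It then remains to count these. By Proposition~\ref{prop:2n-5dno0}, the unique maximal unrefinable partition with $d\notin\l$ and $\l_t=2n-5$ has $d=n-6$, so for $d=n-4$ we necessarily have $d\in\l$, and all admissible configurations are those listed in Section~\ref{sec:2nminus5} and collected in Table~\ref{table2}. Among the partitions with $h\leq 3$, exactly one occurs for $d=n-4$, namely $(n-4,2n-5)\setminus(n-5,n-3,n-1)$, i.e.\ $(1,2,\dots,n-6,n-4,n-2,n,2n-5)$, which is unrefinable for $n\geq 11$. To finish, I would check that no configuration with $h\geq 4$ produces a partition for $d=n-4$: each of the four families with $h=4$ carries a side condition of the form $d<n-6$ (or stricter), and for $h\geq 5$ the side condition has the form $d\leq n-(h^2-11)$ (and its analogue when $h$ is even), all of which fail at $d=n-4$ since $h^2-11\geq 14$ for $h\geq 5$. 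Hence exactly one partition survives.

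The genuinely delicate part is the last step: one must be sure that the families tabulated in Table~\ref{table2} really exhaust all possibilities for $h\geq 4$ and that every one of their numerical side conditions is violated by $d=n-4$. This is exactly the same pattern of bookkeeping already used for $\#\mup{T_{n,n-2}}=1$, so once that verification is trusted the corollary follows; the remaining parts of the argument are direct applications of the propositions of Section~\ref{sec:bounds}.
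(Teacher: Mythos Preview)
Your argument is correct and follows exactly the route the paper intends: the corollary is stated in the paper as an immediate consequence of the classification summarized in Table~\ref{table2}, and your write-up simply makes explicit the two steps implicit there---first that $\l_t=2n-5$ for $d=n-4$ (via the propositions of Sec.~\ref{sec:bounds}), and second that among the listed configurations only $(n-4,2n-5)\setminus(n-5,n-3,n-1)$ satisfies $d=n-4$, since every $h\ge4$ row carries a bound $d\le n-8$ or stronger.
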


\begin{corollary}
$\#\mup{T_{n,n-6}}=2$.
\end{corollary}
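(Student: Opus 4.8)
The plan is to read the row $d=n-6$ off Table~\ref{table2}: I will show that among the constructions of Section~\ref{sec:descr2nminus5} exactly two maximal unrefinable partitions of $T_{n,n-6}$ occur. Observe first that $n-(n-6)=6$ is even and, since $n\ge 11$, we have $d=n-6>3$, so the operative bound is $\l_t\le 2n-5$ by Eq.~\eqref{eq:bounds}; moreover $d=n-6$ is the value $n-2k$ with $k=3$, which is the parameter range treated in Proposition~\ref{prop:2n-5dno0_2}. Thus it suffices to enumerate the partitions in $\mup{T_{n,n-6}}$ produced by the two subcases $d\in\l$ and $d\notin\l$.

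Next I would exhibit the two partitions that do arise. From Proposition~\ref{prop:2n-5dno0_2} (case $d\in\l$, $h=j+1=3$, free-area triple $(n-5,n-4,n-2)$) one gets
\[
(n-6,2n-5)\setminus(n-5,n-4,n-2)=(1,2,\dots,n-6,n-3,n-1,n,2n-5),
\]
and from Proposition~\ref{prop:2n-5dno0} (case $d\notin\l$) one gets
\[
(n+1,2n-5)\setminus(n-3,n-1,n)=(1,2,\dots,n-7,n-5,n-4,n-2,n+1,2n-5).
\]
Both are unrefinable for $n\ge 11$ by the cited propositions, both attain $\l_t=2n-5$ and are therefore maximal, and they are distinct since $n-6$ belongs to the first but not to the second; hence $\#\mup{T_{n,n-6}}\ge 2$.

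It remains to rule out every other contribution for $d=n-6$. The subcase $d\in\l$, $h=j$ of Proposition~\ref{prop:2n-5dno0_2} forces $d=4$, hence contributes nothing here (as $n\ge 11$), while the subcase $d\in\l$, $h=j+1$ with $h\le 3$ yields only the partition above; likewise the only $d\notin\l$ contribution is the one above (Proposition~\ref{prop:2n-5dno0}). For $h=4$, the four candidate partitions listed in Section~\ref{sec:2nminus5} carry the constraints $d<n-6$, $d<n-6$, $d<n-8$ and $d<n-12$ respectively, all failing at $d=n-6$, and for $h\ge 5$ the maximal free-area choice together with $a_1<a_2$ forces $d$ to be at most $n-12$ (still more restrictive as $h$ grows), again excluding $d=n-6$; the two parities of $h$ are governed by the same estimate. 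Collecting these facts shows that $\mup{T_{n,n-6}}$ consists of exactly the two partitions above, so $\#\mup{T_{n,n-6}}=2$. The only point demanding care is precisely this bookkeeping of the boundary inequalities in the $h\ge 4$ cases: one must verify that $d=n-6$ lies just \emph{outside} each such construction, which is what the strict bound $d<n-6$ --- equivalently, $a_1=(n-6+d)/2<n-6=\l_t-n-1$ lies in the first area and differs from $d$ --- encodes.
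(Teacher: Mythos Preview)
Your argument is correct and is exactly the approach the paper intends: the corollary is stated immediately after Table~\ref{table2} with the remark that it is ``easily noted,'' and what you have done is spell out that for $d=n-6$ only rows~4 and~5 of the table survive while the $h=4$ rows carry the strict bound $d\le n-8$ and the $h\ge5$ rows force $d\le n-(h^2-11)\le n-14$. The only cosmetic point is that your bound ``at most $n-12$'' for $h\ge5$ is the raw inequality $a_1<a_2$ before the parity reduction; the table records the sharper $d\le n-14$, but either suffices to exclude $d=n-6$.
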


As in the previous section, 
it remains to compute $\#\mup{T_{n,d}}$ when $4\leq d \leq n-8$ and $d=n-2k$. Notice that the partition is uniquely determined when we are given $n$, $d$ and the list of the $a_i$s which are smaller than or equal to $n-3$. Only one partition is exceptional with respect to this representation, i.e., the partition 
\[
\tau \deq (4,2n-5)\setminus (n-1,n)= (1,2,\dots, n-2,2n-5).
\]

The following definition is the counterpart of Definition~\ref{ustar2nminus4} for the case under consideration here.
The defined set will be again in one-to-one correspondence with $\mup{T_{n,d}}$.

\begin{definition}\label{ustar2nminus5}
Let  $d=n-2k$ with $4\leq d \leq n-8$.
Let us define the set of missing parts, for each $\l\in\mup{T_{n,d}}$, which are  smaller than or equal to $n-3$:
\[
\mup{T_{n,d}}^*\deq\{(\eta_1,\eta_2,\ldots,\eta_s) \mid s \geq 0, \eta_i\in\Ml, \l\in\mup{T_{n,d}}, \eta_i\leq n-3\}.
\]
Notice that $\tau$ corresponds to the empty partition $()\in \mup{T_{n,4}}^*$ obtained for $s=0$.
\end{definition}

Notice that when $h \geq 5$ we have (cf. Table~\ref{table2})
\[a_1=\frac{n+d+\left(h^2-2h+t\right)}{2}\] for some  $t\in\mathbb{Z}$.
Since the numerator must be even, we have that $h$ is even if and only if $t$ is even. From this,  we obtain a convenient partition of the set $\mup{T_{n,d}}^*$, similar 
to that introduced in Sec.~\ref{sec:2nminus4}, but which takes into account also the parity of $h$. 

\begin{definition}
Let $n$, $d$ and $h$ be positive integers.
If $h$ is odd, let us define
\[
\begin{aligned}
&\mE_{n,d,h}^1\deq\left\{\eta\in\mup{T_{n,d}}^*\mid|\eta|=h-3, \eta_{h-3}\leq n-6\right\},\\
&\mE_{n,d,h}^2\deq\left\{\eta\in\mup{T_{n,d}}^*\mid|\eta|=h-1, \eta_{h-3}\leq n-6, \eta_{h-2}=n-4, \eta_{h-1}=n-3\right\},\\
&\mE_{n,d,h}^3\deq\left\{\eta\in\mup{T_{n,d}}^*\mid|\eta|=h-1, \eta_{h-2}=n-5, \eta_{h-1}=n-3\right\},\\
&\mE_{n,d,h}^4\deq\left\{\eta\in\mup{T_{n,d}}^*\mid|\eta|=h-1, \eta_{h-2}=n-5, \eta_{h-1}=n-4\right\}.
\end{aligned}
\]
Moreover
\[
\begin{aligned}
&\mE_{n,d}^1\deq\bigcup_{h\geq5}\mE_{n,d,h}^1, &\mE_{n,d}^2\deq\bigcup_{h\geq5}\mE_{n,d,h}^2,\\
&\mE_{n,d}^3\deq\bigcup_{h\geq5}\mE_{n,d,h}^3, &\mE_{n,d}^4\deq\bigcup_{h\geq5}\mE_{n,d,h}^4.
\end{aligned}
\]

If $h$ is even, let us define
\[
\begin{aligned}
&\mF_{n,d,h}^1\deq\left\{\eta\in\mup{T_{n,d}}^*\mid|\eta|=h-2, \eta_{h-3}\leq n-6, \eta_{h-2}=n-3\right\},\\
&\mF_{n,d,h}^2\deq\left\{\eta\in\mup{T_{n,d}}^*\mid|\eta|=h-2, \eta_{h-3}\leq n-6, \eta_{h-2}=n-4\right\},\\
&\mF_{n,d,h}^3\deq\left\{\eta\in\mup{T_{n,d}}^*\mid|\eta|=h-3, \eta_{h-3}\leq n-6, \eta_{h-2}=n-5\right\},\\
&\mF_{n,d,h}^4\deq\left\{\eta\in\mup{T_{n,d}}^*\mid|\eta|=h, \eta_{h-2}=n-5, \eta_{h-1}=n-4, \eta_{h}=n-3\right\}.
\end{aligned}
\]
Moreover
\[
\begin{aligned}
&\mF_{n,d}^1\deq\bigcup_{h\geq4}\mF_{n,d,h}^1, &\mF_{n,d}^2\deq\bigcup_{h\geq4}\mF_{n,d,h}^2,\\
&\mF_{n,d}^3\deq\bigcup_{h\geq4}\mF_{n,d,h}^3, &\mF_{n,d}^4\deq\bigcup_{h\geq4}\mF_{n,d,h}^4.
\end{aligned}
\]
Finally, let us denote
\[
\begin{aligned}
&\mE_{n,d}\deq\mE_{n,d}^1\cup\mE_{n,d}^2\cup\mE_{n,d}^3\cup\mE_{n,d}^4,\\
&\mF_{n,d}\deq\mF_{n,d}^1\cup\mF_{n,d}^2\cup\mF_{n,d}^3\cup\mF_{n,d}^4.
\end{aligned}
\]
\end{definition}
Reading Table~\ref{table2}, we can note that  
\begin{equation}\label{eq:ud4}
\mup{T_{n,d}}^*=\begin{cases}
 \mE_{n,d}\,\dot\cup\,\mF_{n,d} & 5\leq d\leq n-8,\\  
\mE_{n,d}\,\dot\cup\,\left\{\left(\right)\right\}\,\dot\cup\,\left(\bigcup_{h\geq6}\mF_{n,d,h}^1\right)\,\dot\cup\,\mF_{n,d}^2\,\dot\cup\,\mF_{n,d}^3\,\dot\cup\,\mF_{n,d}^4 &d=4.
\end{cases}
\end{equation}
The sets defined next play in this section the same role of those defined in Definition~\ref{def:partDr}.

\begin{definition}\label{def:EF}
Let $n$, $d$ and $h$ be positive integers. If $h$ is odd, let us define
\[
\begin{aligned}
&\mE_{n,d,h}^{1^*}\deq\left\{\rho\in\mathbb{D}_{k+{(h-1)}/{2}}\mid|\rho|=h-3, \rho_1\geq4\right\},\\
&\mE_{n,d,h}^{2^*}\deq\left\{\rho\in\mathbb{D}_{k+{(h+1)}/{2}}\mid|\rho|=h-1, \rho_1=1, \rho_2=2, \rho_3\geq4\right\},\\
&\mE_{n,d,h}^{3^*}\deq\left\{\rho\in\mathbb{D}_{k+{(h+1)}/{2}}\mid|\rho|=h-1, \rho_1=1, \rho_2=3 \right\},\\
&\mE_{n,d,h}^{4^*}\deq\left\{\rho\in\mathbb{D}_{k+{(h+1)}/{2}}\mid|\rho|=h-1, \rho_1=2, \rho_2=3 \right\}.
\end{aligned}
\]
If $h$ is even, let us define
\[
\begin{aligned}
&\mF_{n,d,h}^{1^*}\deq\left\{\rho\in\mathbb{D}_{k+{h}/{2}}\mid|\rho|=h-2, \rho_1=1, \rho_2\geq4\right\},\\
&\mF_{n,d,h}^{2^*}\deq\left\{\rho\in\mathbb{D}_{k+{h}/{2}}\mid|\rho|=h-2, \rho_1=2, \rho_2\geq4\right\},\\
&\mF_{n,d,h}^{3^*}\deq\left\{\rho\in\mathbb{D}_{k+{h}/{2}}\mid|\rho|=h-2, \rho_1=3, \rho_2\geq4\right\},\\
&\mF_{n,d,h}^{4^*}\deq\left\{\rho\in\mathbb{D}_{k+1+{h}/{2}}\mid|\rho|=h, \rho_1=1, \rho_2=2, \rho_3=3\right\}.
\end{aligned}
\]
\end{definition}

In the following definition we adapt the description of $\phi$ (cf.\ Definition~\ref{def:phi}) to the current representation of $\mup{T_{n,d}}^*$.
\begin{definition}\label{def:phi2}
Let us define the following correspondence from $\mup{T_{n,d}}^*\setminus \left\{ () \right\}$ to $\dist$. We will discuss later how to extend the values of the function on the empty partition  $ ()$. We denote
\[
\begin{array}{rrcl}
\phi:&\mup{T_{n,d}}^* \setminus \left\{ ()\right\} & \longrightarrow & \dist\\
&(\eta_1,\eta_2,\ldots,\eta_t)&\longmapsto&(n-2-\eta_t,\ldots,n-2-\eta_2,n-2-\eta_1).
\end{array}
\]
\end{definition}

\begin{proposition}\label{prop:oddEF}
Let $d=n-2k$ such that $4\leq d\leq n-8$ and $h\geq 4$. Then, for $1 \leq i \leq 4$, $\phi$ sends bijectively
\begin{enumerate}
\item $\mE_{n,d,h}^{i}$ into $\mE_{n,d,h}^{i^*}$,
\item $\mF_{n,d,h}^{i}$ into $\mF_{n,d,h}^{i^*}$,
\end{enumerate}
\end{proposition}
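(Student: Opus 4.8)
The plan is to mirror, configuration by configuration, the argument already used for Proposition~\ref{prop:dispA} and Proposition~\ref{prop:dispBCD}, now applied to the eight free-area triples (four for each parity of $h$) that survive the parity restriction recorded in the last paragraph of the proof of Proposition~\ref{prop:2n-5dno0_2}.

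I would first dispose of injectivity once and for all: $\phi$ sends $(\eta_1,\dots,\eta_t)$ to the strictly increasing list $(n-2-\eta_t,\dots,n-2-\eta_1)$, hence it is the order-reversal of a finite list composed with the affine involution $x\mapsto n-2-x$ applied coordinatewise; in particular it is injective, and its inverse $\rho\mapsto(n-2-\rho_{|\rho|},\dots,n-2-\rho_1)$ has the same shape. So for each $i$ and each parity of $h$ only two things remain: that the source set is sent into the target set, and that every element of the target set is attained.

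For well-definedness I would argue case by case. Having fixed $i$ and the parity of $h$, Table~\ref{table2} displays the generic partition of $\mup{T_{n,d}}$ in the corresponding family, and the associated element of $\mup{T_{n,d}}^*$ is obtained by keeping exactly the removed parts that are $\le n-3$; it therefore has the form $(a_1,\dots,a_{h-3})$ followed by zero, one, two or three of the free-area values from $\{n-5,n-4,n-3\}$, where $a_1$ is the explicit expression tabulated in Table~\ref{table2} shifted upward by a nonnegative $p$, and $a_2,\dots,a_{h-3}$ is the maximal consecutive run shifted downward by $p_1,\dots,p_{h-4}$ with $p=\sum_r p_r$, chosen so the list stays strictly increasing. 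Applying $\phi$ turns the trailing free-area values $n-3,n-4,n-5$ into the leading values $1,2,3$ in the combination prescribed for that $i$, and turns $(a_1,\dots,a_{h-3})$ into a strictly increasing list whose least entry is $n-2-a_{h-3}\ge 4$, precisely because $a_{h-3}\le n-6$; this produces the prescribed length and the prescribed two or three smallest parts of the target set, and in particular a partition into distinct parts with at least two parts. The only thing left is the total weight: since $|\phi(\eta)|=|\eta|$ one has $\sum_r\phi(\eta)_r=|\eta|\,(n-2)-\sum_r\eta_r$, and $\sum_r\eta_r$ is unchanged by the perturbation (the shift $+p$ on $a_1$ cancels the total downward shift on $a_2,\dots,a_{h-3}$); plugging in the tabulated value of $a_1$ and then $d=n-2k$, the weight collapses to $k+\tfrac{h-1}{2}$, $k+\tfrac{h+1}{2}$, $k+\tfrac{h}{2}$ or $k+1+\tfrac{h}{2}$ according to the case, which is exactly the subscript of the ambient $\dist$ prescribed in Definition~\ref{def:EF}. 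For surjectivity I would run the inverse map: given $\rho$ in the target set, put $\eta=(n-2-\rho_{|\rho|},\dots,n-2-\rho_1)$; the conditions on $\rho_1,\rho_2,\rho_3$ translate back into the correct free-area triple together with $a_{h-3}\le n-6$, the length matches automatically, and the weight condition on $\rho$ translates back, through the same relation $\sum_r\eta_r=|\eta|(n-2)-\sum_r\phi(\eta)_r$, into the defining equation of the relevant row of Table~\ref{table2}; hence $\eta$ lies in the right family and $\phi(\eta)=\rho$ by construction.

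The substance of the work is thus the bookkeeping, and I expect that to be the only real obstacle: matching the four admissible $h$-odd triples (those producing $|\eta|\in\{h-3,h-1\}$) with $\mE_{n,d,h}^{1},\dots,\mE_{n,d,h}^{4}$ and the four admissible $h$-even triples (those producing $|\eta|\in\{h-2,h\}$) with $\mF_{n,d,h}^{1},\dots,\mF_{n,d,h}^{4}$, and extracting the right additive constant from each tabulated $a_1$. Two points I would isolate as separate checks: the extreme value $h=4$, where there is no consecutive run and no perturbation, so each $\mF_{n,d,4}^{i}$ is a single partition or empty, and one verifies directly that $\phi$ sends it to $(1,k+1)$, $(2,k)$, $(3,k-1)$, respectively $(1,2,3,k-3)$, using the strict inequalities $d<n-6$, $d<n-8$, $d<n-12$ recorded in Section~\ref{sec:2nminus5} to decide nonemptiness; and the verification that the inserted parts $1,2,3$ really are smaller than $n-2-a_{h-3}$, which is once more exactly the standing hypothesis $a_{h-3}\le n-6$. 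No unrefinability argument is needed here, since membership in $\mup{T_{n,d}}^*$ was already established in Section~\ref{sec:2nminus5}.
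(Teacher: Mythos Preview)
Your proposal is correct and follows essentially the same approach as the paper: the paper works out the case $\mE_{n,d,h}^{1}\overset{\phi}{\leftrightarrow}\mE_{n,d,h}^{1^*}$ explicitly from the generic form in Table~\ref{table2} (well-definedness, trivial injectivity, surjectivity via the same affine inverse) and then declares the remaining seven cases analogous, while you describe the common mechanism once and indicate how the eight configurations slot into it. Your additional remarks on the $h=4$ case and on the role of the constraint $a_{h-3}\le n-6$ are correct and match the paper's Remark~\ref{rmk:44}.
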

\begin{proof}
Let us prove that $\mE_{n,d,h}^{1} \overset{\phi}{\leftrightarrow}\mE_{n,d,h}^{1^*}$. The other claims can be proved in the same way.
Let us start by proving that the correspondence is well defined, i.e., if $\eta\in \mE_{n,d,h}^{1}$, then $\phi(\eta)\in \mE_{n,d,h}^{1^*}$. 
Let $\eta \in \mE_{n,d,h}^{1}$. Then, by Table~\ref{table2}, 
\[
\eta=\left(\frac{n+\left(h^2-2h-15\right)+d}{2}+i,n-h-1-i_1,\dots,n-6-i_{h-4}\right),
\]
for some positive integers $i,i_1 \geq i_2\geq\ldots \geq i_{h-4}$ such that $i=\sum_{j=1}^{h-4}i_j$. By definition of $\phi$ we have
\[
\phi(\eta)=\left(4+i_{h-4},\dots,h-1+i_1,\frac{n-h^2+2h+11-d}{2}-i\right).
\]
Notice that $|\phi(\eta)|=h-3$, $\phi(\eta)_1\geq 4$ and that $\phi(\eta)\vdash k+(h-1)/2$. Therefore $\phi(\eta)\in\mE_{n,d,h}^{1^*}$.

Notice also that $\phi$ is trivially injective and so, in order to conclude the proof, it remains to prove that $\phi$ is surjective from $\mE_{n,d,h}^{1}$ to $\mE_{n,d,h}^{1^*}$. For this purpose, let $\rho=(\rho_1,\rho_2,\ldots,\rho_{h-3})\in\mE_{n,d,h}^{1^*}$. Then the general expression for such $\rho$ is 
\[
\rho=\left(4+i_1,5+i_2,\dots,h-1+i_{h-4},k+\frac{-h^2+2h+11}{2}-i\right)
\] 
for some positive integers $i,i_1\leq i_2 \leq \ldots \leq i_{h-4}$ such that $i=\sum_{j=1}^{h-4}i_j$. It is easy to see that
\[
\eta\deq(n-2-\rho_{h-3},\ldots,n-2-\rho_2,n-2-\rho_1)
\]
is such that $\phi(\eta)=\rho$. We need to prove that $\eta\in\mE_{n,d,h}^{1}$. We have 
\begin{align*}
\eta=&\left(n-2-\left(k+\frac{-h^2+2h+11}{2}-i\right),n-2-(h-1+i_{h-4}),\ldots,n-2-(4+i_1)\right)\\
=&\left(\frac{2n-2k+h^2-2h-15}{2}+i,n-h-1-i_{h-4},\dots,n-6-i_1\right).
\end{align*}
which, from Table~\ref{table2}, is exactly the generic form of a partition in $\mE_{n,d,h}^{1}$.
\end{proof}

Notice that, as in Proposition~\ref{prop:dispBCD}, each of the sets $\mF_{n,d,h}^{i^*}$, with $h=4$, contains only one partition.
\begin{remark}\label{rmk:44}
Let $h=4$ and $d=n-2k$ be such that $4\leq d\leq n-8$. We have

\begin{enumerate}
\item $\mF_{n,d,h}^1 \overset{\phi}{\leftrightarrow} \mF_{n,d,h}^{1^*} = \left(1,k+\frac{h}{2}-1\right)$, for $d \neq 4$,
\item $\mF_{n,d,h}^2 \overset{\phi}{\leftrightarrow} \mF_{n,d,h}^{2^*} = \left(2,k+\frac{h}{2}-2\right)$, 
\item $\mF_{n,d,h}^3 \overset{\phi}{\leftrightarrow} \mF_{n,d,h}^{3^*} = \left(3,k+\frac{h}{2}-3\right)$,
\item $\mF_{n,d,h}^4 \overset{\phi}{\leftrightarrow} \mF_{n,d,h}^{4^*} = \left(1,2,3,k+\frac{h}{2}-5\right)$.
\end{enumerate}
\end{remark}

We now show how the partitions of $\mE_{n,d,\_}^{i^*}$ and $\mF_{n,d,\_}^{i^*}$ represent a convenient partition of the set $\mathbb{D}_{k+{(h-1)}/{2}}$, which will be 
used to prove the claimed bijection.

\begin{proposition}\label{prop:cardEF}
Let $d=n-2k$ such that $4\leq d\leq n-8$ and $h\geq5$ be odd. Then we have
\begin{align}\label{eq:propbije}
\mathbb{D}_{k+{(h-1)}/{2},\, h-3} =& \,\mE_{n,d,h}^{1^*}\cup\mE_{n,d,h-2}^{2^*}\cup\mE_{n,d,h-2}^{3^*}\cup\mE_{n,d,h-2}^{4^*} \, \cup \nonumber\\
 &\cup\mF_{n,d,h-1}^{1^*}\cup\mF_{n,d,h-1}^{2^*}\cup\mF_{n,d,h-1}^{3^*}\cup\mF_{n,d,h-3}^{4^*}.
\end{align}
\end{proposition}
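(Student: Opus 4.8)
The plan is to read this as a purely combinatorial decomposition of a set of partitions into distinct parts. Write $M\deq k+\tfrac{h-1}{2}$ and $s\deq h-3$; although the sets in Definition~\ref{def:EF} carry the subscript $n,d$, their defining conditions involve only $k$ and $h$. The left-hand side of the identity is then $\mathbb{D}_{M,s}$, and the claim is precisely that the eight sets on the right form a partition of $\mathbb{D}_{M,s}$.

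The first step is to verify that each of the eight sets on the right really consists of partitions of $M$ into exactly $s$ distinct parts, by substituting the shifted arguments into Definition~\ref{def:EF}. For $\mE_{n,d,h}^{1^*}$ this is immediate. For $\mE_{n,d,h-2}^{i^*}$ with $i\in\{2,3,4\}$, the index $h-2$ is odd, $k+\tfrac{(h-2)+1}{2}=M$, and the prescribed length is $(h-2)-1=s$. For $\mF_{n,d,h-1}^{i^*}$ with $i\in\{1,2,3\}$, the index $h-1$ is even, $k+\tfrac{h-1}{2}=M$, and the length is $(h-1)-2=s$. For $\mF_{n,d,h-3}^{4^*}$, the index $h-3$ is even, $k+1+\tfrac{h-3}{2}=M$, and the length is $h-3=s$. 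Hence the right-hand side is contained in $\mathbb{D}_{M,s}$.

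The heart of the argument is the second step: once $M$ and $s$ are fixed, each of the eight sets is singled out by a condition on only the smallest one, two, or three parts of $\rho=(\rho_1<\rho_2<\cdots<\rho_s)$, namely $\rho_1\geq4$ for $\mE_{n,d,h}^{1^*}$; $\rho_1=3$ for $\mF_{n,d,h-1}^{3^*}$ (which forces $\rho_2\geq4$); $\rho_1=2,\rho_2\geq4$ for $\mF_{n,d,h-1}^{2^*}$; $\rho_1=2,\rho_2=3$ for $\mE_{n,d,h-2}^{4^*}$; $\rho_1=1,\rho_2\geq4$ for $\mF_{n,d,h-1}^{1^*}$; $\rho_1=1,\rho_2=3$ for $\mE_{n,d,h-2}^{3^*}$; $\rho_1=1,\rho_2=2,\rho_3\geq4$ for $\mE_{n,d,h-2}^{2^*}$; and $\rho_1=1,\rho_2=2,\rho_3=3$ for $\mF_{n,d,h-3}^{4^*}$. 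A finite case analysis on $(\rho_1,\rho_2,\rho_3)$ for an arbitrary $\rho\in\mathbb{D}_{M,s}$ --- first splitting on whether $\rho_1\in\{1,2,3\}$ or $\rho_1\geq4$, then on the value of $\rho_2$, and in the single remaining subcase $\rho_1=1,\rho_2=2$ on the value of $\rho_3$ --- shows that these eight conditions are mutually exclusive and jointly exhaustive, so $\rho$ lies in exactly one of the eight sets. This simultaneously establishes disjointness of the union on the right and its equality with $\mathbb{D}_{M,s}$, which is the asserted identity.

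I do not anticipate a conceptual difficulty: the only real care is bookkeeping, namely keeping the four index shifts ($h$, $h-2$, $h-1$, $h-3$) consistent so that all eight sets land in the same $\mathbb{D}_{M,s}$, together with a check that small $h$ does not spoil the partition. For instance when $h=5$ one has $s=2$, the sets $\mE_{n,d,3}^{2^*}$ and $\mF_{n,d,2}^{4^*}$ are empty for length reasons, and this is harmless because $d\leq n-8$ forces $k\geq4$, whence $M=k+2\geq6>3$ and no $\rho\in\mathbb{D}_{M,2}$ can begin with $(1,2)$.
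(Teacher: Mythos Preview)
Your proposal is correct and follows essentially the same approach as the paper: both verify that the eight sets, after the index shifts, land in $\mathbb{D}_{k+(h-1)/2,\,h-3}$, and then observe that their defining conditions on the smallest one, two, or three parts of $\rho$ are mutually exclusive and jointly exhaustive. Your write-up is in fact more careful than the paper's, since you explicitly track the parity of the shifted indices and handle the degenerate case $h=5$, whereas the paper simply lists the eight conditions and leaves the case split implicit.
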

\begin{proof}
It follows from Definition~\ref{def:EF} that each partition in one of the sets in the right side of Eq.~\eqref{eq:propbije} is a partition of $k+{(h-1)}/{2}$ into $h-3$ distinct parts, therefore we have 
\begin{align*}
\mathbb{D}_{k+{(h-1)}/{2},\, h-3} \supseteq & \,\mE_{n,d,h}^{1^*}\cup\mE_{n,d,h-2}^{2^*}\cup\mE_{n,d,h-2}^{3^*}\cup\mE_{n,d,h-2}^{4^*} \, \cup\\
 &\cup\mF_{n,d,h-1}^{1^*}\cup\mF_{n,d,h-1}^{2^*}\cup\mF_{n,d,h-1}^{3^*}\cup\mF_{n,d,h-3}^{4^*}.
\end{align*}
To prove the converse, it is enough to notice that the claimed sets form a partition of the set  $\mathbb{D}_{k+{(h-1)}/{2},\, h-3}$, indeed we can write 
\[
\begin{aligned}
\mE_{n,d,h}^{1^*}=&\left\{\l\in \him\mid \l_1\geq4\right\},\\
\mF_{n,d,h-1}^{1^*}=&\left\{\l\in\him\mid \l_1=1,\l_2\geq4\right\},\\
\mF_{n,d,h-1}^{2^*}=&\left\{\l\in\him\mid \l_1=2,\l_2\geq4\right\},\\
\mF_{n,d,h-1}^{3^*}=&\left\{\l\in\him \mid \l_1=3,\l_2\geq4\right\},\\
\mE_{n,d,h-2}^{2^*}=&\left\{\l\in\him \mid \l_1=1,\l_2=2,\l_3\geq4\right\},\\
\mE_{n,d,h-2}^{3^*}=&\left\{\l\in\him\mid \l_1=1,\l_2=3,\l_3\geq4\right\},\\
\mE_{n,d,h-2}^{4^*}=&\left\{\l\in\him\mid \l_1=2,\l_2=3,\l_3\geq4\right\},\\
\mF_{n,d,h-3}^{4^*}=&\left\{\l\in\him \mid \l_1=1,\l_2=2,\l_3=3\right\}.
\end{aligned}
\]
\end{proof}

We now use Proposition~\ref{prop:cardEF} to show the claimed bijection related to the case $\l_t=2n-5$ of the main result introduced in Sec.~\ref{sec:contri}.
\begin{theorem}\label{thm_main2}
Let $d=n-2k$ such that $4 \leq d\leq n-8$. Then 
\[
\mup{T_{n,d}} \leftrightarrow \dpok.
\]
\end{theorem}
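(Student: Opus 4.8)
The plan is to mirror the structure of Theorem~\ref{thm:maindodd}, using the machinery already developed in Section~\ref{sec:2nminus5}. First I would recall that $\mup{T_{n,d}}$ is in one-to-one correspondence with $\mup{T_{n,d}}^*$ (the set of missing parts $\leq n-3$, as in Definition~\ref{ustar2nminus5}), so it suffices to establish a bijection between $\mup{T_{n,d}}^*$ and $\dpok$. The natural candidate is the map $\phi$ of Definition~\ref{def:phi2}, suitably extended to the empty partition as in the analogue of Remark~\ref{rmk:disp4}.

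The key step is a decomposition-and-reassembly argument. On the source side, Eq.~\eqref{eq:ud4} expresses $\mup{T_{n,d}}^*$ as a disjoint union of the families $\mE_{n,d}^{i}$ (for $h$ odd) and $\mF_{n,d}^{i}$ (for $h$ even), together with the exceptional empty partition when $d=4$. By Proposition~\ref{prop:oddEF} and Remark~\ref{rmk:44}, $\phi$ sends each $\mE_{n,d,h}^{i}$ bijectively onto $\mE_{n,d,h}^{i^*}$ and each $\mF_{n,d,h}^{i}$ bijectively onto $\mF_{n,d,h}^{i^*}$. On the target side, Proposition~\ref{prop:cardEF} reassembles, for each odd $h\geq 5$, exactly the eight starred families (with the appropriate shifts in the second index, $h$, $h-1$, $h-2$, $h-3$) into the single block $\mathbb{D}_{k+(h-1)/2,\,h-3}$. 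Taking the union over all odd $h\geq 5$ sweeps out $\dpok$ entirely, since every partition of an odd-indexed $\mathbb{D}^{\,\text{odd}}_{2(k+1)}$ of length $s$ lies in a unique such block with $h-3=s$; one has to check the bookkeeping of which value of $k+(h-1)/2$ the parts sum to, and that these sums range over exactly $\{k+1, k+2, \dots\}$ as $h$ ranges over odd integers $\geq 5$, matching $\dpok=\mathbb{D}_{2(k+1)}^{\,\text{odd}}$. In the case $d=4$ the empty partition is adjoined on the left and corresponds under the extended $\phi$ to the single partition $\tau$-image in $\dpok$, keeping the count correct.

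Concretely, I would write
\begin{align*}
\mup{T_{n,d}} \leftrightarrow \mup{T_{n,d}}^*
&= \bigcup_{\substack{h\geq 5\\ h\text{ odd}}}\!\left(\mE_{n,d,h}^{1}\cup\mE_{n,d,h}^{2}\cup\mE_{n,d,h}^{3}\cup\mE_{n,d,h}^{4}\right) \,\dot\cup \\
&\quad\;\; \bigcup_{\substack{h\geq 4\\ h\text{ even}}}\!\left(\mF_{n,d,h}^{1}\cup\mF_{n,d,h}^{2}\cup\mF_{n,d,h}^{3}\cup\mF_{n,d,h}^{4}\right)
\end{align*}
(plus $\{()\}$ when $d=4$), apply $\phi$ termwise via Proposition~\ref{prop:oddEF} and Remark~\ref{rmk:44} to land in the corresponding starred families, then regroup the starred families by the common value of the underlying partition size and length using Proposition~\ref{prop:cardEF}, obtaining $\dot\bigcup_{h\text{ odd}\geq 5}\mathbb{D}_{k+(h-1)/2,\,h-3} = \dpok$. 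The main obstacle I expect is the indexing discipline: the even-$h$ $\mF$-families must be re-indexed ($h\mapsto h-1$ and $h\mapsto h-3$) so that they feed into the odd-$h$ target blocks, and one must verify that the union of all these blocks is precisely $\dpok$ with no omissions or overlaps — this is exactly what Proposition~\ref{prop:cardEF} is designed to supply, so the proof reduces to invoking it carefully, handling the $d=4$ exceptional term separately, and noting that for $n\geq 11$ all the side conditions ($d<n-6$, etc.) in Table~\ref{table2} are compatible with the stated range $4\leq d\leq n-8$.
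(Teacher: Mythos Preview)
Your plan follows the paper's approach almost exactly, but there is one genuine gap at the very end. You write
\[
\dot\bigcup_{\substack{h\geq 5\\ h\text{ odd}}}\mathbb{D}_{k+(h-1)/2,\,h-3} = \dpok,
\]
and claim that ``every partition of $\mathbb{D}^{\,\text{odd}}_{2(k+1)}$ of length $s$ lies in a unique such block with $h-3=s$.'' This is false as stated: the left-hand union, after reindexing by $i=(h-5)/2$, is $\bigcup_{i\geq 0}\mathbb{D}_{k+2+i,\,2+2i}$, a union of sets of partitions of \emph{varying} integers $k+2,k+3,\ldots$ into distinct (not necessarily odd) parts of prescribed lengths. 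The right-hand side is the set of partitions of the \emph{fixed} integer $2(k+1)$ into distinct \emph{odd} parts. These are not the same set; an element of one does not literally lie in the other.

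What is missing is a further explicit bijection between the two. The paper supplies exactly this: the map
\[
\psi\colon \mathbb{D}_{k+2+i,\,2+2i}\longrightarrow \mathbb{D}_{2(k+1),\,2+2i}^{\,\text{odd}},\qquad (\lambda_1,\ldots,\lambda_{2+2i})\longmapsto (2\lambda_1-1,\ldots,2\lambda_{2+2i}-1),
\]
together with the observation that $\dpok=\bigcup_{i\geq 0}\mathbb{D}_{2(k+1),\,2+2i}^{\,\text{odd}}$ since a partition of an even number into distinct odd parts must have an even number of parts. Once you insert this step after invoking Proposition~\ref{prop:cardEF}, your argument is complete and coincides with the paper's. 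Your handling of the $d=4$ case (the empty partition replacing the missing element of $\mF_{n,4,4}^{1}$) is correct.
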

\begin{proof}
Let us start assuming $d > 4$. We obtain the claim by showing first that $\mup{T_{n,d}} \leftrightarrow \bigcup_{i\geq 0} \mathbb{D}_{k+2+i,2+2i}$ and successively
that $\bigcup_{i\geq 0} \mathbb{D}_{k+2+i,2+2i} \leftrightarrow \dpok$. The first claim follows directly from Proposition~\ref{prop:cardEF}, indeed

\[
\begin{aligned}
\mup{T_{n,d}} \leftrightarrow &\mup{T_{n,d}}^*\\
=&\, \mE_{n,d}\cup\mF_{n,d}\\
=&\bigcup_{h\geq5}\mE_{n,d,h}^{1} \cup \bigcup_{h\geq5}\mE_{n,d,h}^{2} \cup \bigcup_{h\geq5}\mE_{n,d,h}^{3} \cup \bigcup_{h\geq5}\mE_{n,d,h}^{4} \cup \\
&\cup \bigcup_{h\geq4}\mF_{n,d,h}^{1} \cup \bigcup_{h\geq4}\mF_{n,d,h}^{2} \cup \bigcup_{h\geq4}\mF_{n,d,h}^{3} \cup \bigcup_{h\geq4}\mF_{n,d,h}^{4}\\
\leftrightarrow& \bigcup_{h\geq5}\mE_{n,d,h}^{1^*}\cup\bigcup_{h\geq5}\mE_{n,d,h}^{2^*}\cup\bigcup_{h\geq5}\mE_{n,d,h}^{3^*}\cup\bigcup_{h\geq5}\mE_{n,d,h}^{4^*}\cup\\
&\cup\bigcup_{h\geq4}\mF_{n,d,h}^{1^*}\cup\bigcup_{h\geq4}\mF_{n,d,h}^{2^*}\cup\bigcup_{h\geq4}\mF_{n,d,h}^{3^*}\cup\bigcup_{h\geq4}\mF_{n,d,h}^{4^*}\\
=&\bigcup_{\substack{h\geq5\\h \text { odd }}} \left(\mE_{n,d,h}^{1^*}\cup\mE_{n,d,h-2}^{2^*}\cup\mE_{n,d,h-2}^{3^*}\cup\mE_{n,d,h-2}^{4^*}  \right) \cup\\
& \bigcup_{\substack{h\geq5\\h \text { odd }}}\left(\mF_{n,d,h-1}^{1^*}\cup\mF_{n,d,h-1}^{2^*}\cup\mF_{n,d,h-1}^{3^*}\cup\mF_{n,d,h-3}^{4^*}\right)\\
=&\bigcup_{\substack{h\geq5\\h \text { odd }}}\him\\
=&\bigcup_{i\geq 0} \mathbb{D}_{k+2+i,\,2+2i}.
\end{aligned}
\]
Notice that the union in the last equation does not provide any contribution when  $i$ is sufficiently large, therefore it represents a finite union of sets. It can be noticed indeed that the largest number of parts that can appear in a partition of $T_{n,d}$ is approximatively the square root of $n$, while there is a linear dependence in $i$ between $k+2+i$ and $2+2i$.

Let us now prove that $\bigcup_{i\geq 0} \mathbb{D}_{k+2+i,2+2i} \leftrightarrow \dpok$. First notice that, if $\l \in \dpok$, then $|\l|$ is even, therefore the following equation trivially holds
\[
\dpok = \bigcup_{i\geq 0}\mathbb{D}_{2\left(k+1\right),\,2+2i}^{\,\text{odd }},
\]
where the last union is again only formally infinite. 
Let us define 
\[
\begin{tabular}{cccc}
$\psi\colon$ & $\mathbb{D}_{k+2+i,\,2+2i}$ & $\to$ & $\mathbb{D}_{2\left(k+1\right),\,2+2i}^{\,\text{odd}}$\\
& $\left(\l_1,\dots,\l_{2+2i}\right)$ & & $\left(2\l_1-1,\dots,2\l_{2+2i}-1\right)$
\end{tabular}
\]
and let us prove that $\psi$ is bijective.
Clearly $\psi$ is well defined, indeed if $\l \in \mathbb{D}_{k+2+i,\,2+2i}$, then 
\[
\begin{aligned}
\psi\left(\l\right)\vdash 2\l_1-1+\dots+2\l_{2+2i}-1&=2\left(\l_1+\dots+\l_{2+2i}\right)-2-2i\\
&=2\left(k+2+i\right)-2-2i\\
&=2k+2.
\end{aligned}
\]
Let us now prove that $\psi$ is surjective. Let $\sigma = \left(\sigma_1, \sigma_2, \dots,\sigma_{2+2i}\right) \in \mathbb{D}_{2\left(k+1\right),\,2+2i}^{\,\text{odd}}$.
It is easy to verify that 
\[
\rho \deq \left(\frac{\sigma_1+1}{2},\frac{\sigma_2+1}{2},\dots,\frac{\sigma_{2+2i}+1}{2}\right)
\]
is such that $\psi(\rho) = \sigma$.
Since $\psi$ is trivially injective, the claim is proved for $d > 4$.
In the case $d=4$, from Eq.~\eqref{eq:ud4} we have

\begin{align*}
\mup{T_{n,4}} \leftrightarrow &\mup{T_{n,4}}^*\\
=&\, \mE_{n,4}\,\dot\cup\,\left\{\left(\right)\right\}\,\dot\cup\,\left(\bigcup_{h\geq6}\mF_{n,4,h}^1\right)\,\dot\cup\,\mF_{n,4}^2\,\dot\cup\,\mF_{n,4}^3\,\dot\cup\,\mF_{n,4}^4.
\end{align*}
The claim is obtained as before, only noticing the empty partition $()$ replaces the partition  of $\mF_{n,d,4}^1$, which is not defined when $d=4$ (cf. Remark~\ref{rmk:44}).

\end{proof}
\section{Conclusions and open problems}\label{sec_final}
In this paper we completed the classification of maximal unrefinable partitions started by Aragona et al.~\cite{aragona2021maximal}. Now we have that, if $N$ is the triangular number $T_n$, then the number of maximal unrefinable partitions of $T_n$ is one if $n$ is even and coincides with the number of partitions
 of $(n+1)/2$ into distinct parts if $n$ is odd. If $N$ is non-triangular, i.e., if $N=T_{n,d}$ for some $n \geq 11$ and $1 \leq d \leq n-1$, from Theorem~\ref{thm:maindodd} and Theorem~\ref{thm_main2} we obtain:

\begin{corollary}
If $n$ is odd, then
\[
\#\mup{T_{n,d}}=
\begin{cases}
1+\#\dist_{(n-d+1)/2} & \text{ if } d  > 3\text{ is even},\\
\# {\mathbb{D}_{n-d+2}^{\,\text {odd }}}& \text{ if } d > 3 \text{ is odd},\\
1 & \text{ if } d \in \{1,2,3\}.
\end{cases}
\]
Otherwise
\[
\#\mup{T_{n,d}}=
\begin{cases}
1+\#\dist_{(n-d+1)/2} & \text{ if } d >2 \text{ is odd},\\
\# {\mathbb{D}_{n-d+2}^{\,\text {odd }}}& \text{ if } d>2 \text{ is even}, \\
1 & \text{ if } d \in \{1,2\}.
\end{cases}
\]
\end{corollary}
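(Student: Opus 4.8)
The plan is to observe that the two displayed case distinctions are a mere repackaging of Theorem~\ref{thm:maindodd}, Theorem~\ref{thm_main2} and the propositions and corollaries of Section~\ref{sec:bounds} and Section~\ref{sec:count}, obtained by trading the parity of $n-d$ for the joint parity of $n$ and $d$ and by reading off from Eq.~\eqref{eq:bounds} which value of $\l_t$ is attained by a maximal unrefinable partition of $T_{n,d}$. Accordingly, I would first split into the ranges $d\in\{1,2\}$, $d=3$ and $d>3$, and inside the last one split further according to whether $n-d$ is even (equivalently, $n\equiv d\pmod 2$) or odd.

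Next I would run the case $d>3$. If $n-d$ is odd, Eq.~\eqref{eq:bounds} gives $\l_t=2n-4$; by Proposition~\ref{prop:2n-4dno0} the only maximal partition with $d\notin\l$ occurs for $d=n-5$, and by Proposition~\ref{prop:d3} every maximal partition forces $d=n-(2k-1)$ with $k=(n-d+1)/2$. Theorem~\ref{thm:maindodd} then gives $\#\mup{T_{n,d}}=1+\#\dist_{(n-d+1)/2}$ for $3\le d\le n-7$, while the three excluded values $d\in\{n-1,n-3,n-5\}$ are exactly the cases treated by the corollaries computing $\#\mup{T_{n,n-1}}=\#\mup{T_{n,n-3}}=1$ and $\#\mup{T_{n,n-5}}=2$; these agree with the uniform formula since $\#\dist_1=\#\dist_2=0$ and $\#\dist_3=1$. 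Dually, if $n-d$ is even then Eq.~\eqref{eq:bounds} gives $\l_t=2n-5$; Proposition~\ref{prop:2n-5dno0} and Proposition~\ref{prop:2n-5dno0_2} force $d=n-2k$ with $k=(n-d)/2$, and Theorem~\ref{thm_main2} gives $\mup{T_{n,d}}\leftrightarrow\dpok$, i.e.\ $\#\mup{T_{n,d}}=\#\mathbb{D}_{2(k+1)}^{\,\text{odd }}=\#\mathbb{D}_{n-d+2}^{\,\text{odd }}$, for $4\le d\le n-8$, the excluded values $d\in\{n-2,n-4,n-6\}$ being covered by the corollaries computing $\#\mup{T_{n,n-2}}=\#\mup{T_{n,n-4}}=1$ and $\#\mup{T_{n,n-6}}=2$, consistently with $\#\mathbb{D}_4^{\,\text{odd }}=\#\mathbb{D}_6^{\,\text{odd }}=1$ and $\#\mathbb{D}_8^{\,\text{odd }}=2$. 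Since, for $n$ odd, the integers $d>3$ with $n-d$ odd are exactly the even ones and those with $n-d$ even are exactly the odd ones, while for $n$ even the roles are reversed, substituting these parity conditions in the two subcases produces the lines for $d$ even and $d$ odd in both displayed formulas.

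Finally I would dispose of $d\le 3$. For $d\in\{1,2\}$, Proposition~\ref{prop:d1} and Proposition~\ref{prop:d2} give $\#\mup{T_{n,d}}=1$ for every $n$, which is the last line of both formulas. For $d=3$ with $n$ odd, Corollary~\ref{cor:d=3} gives $\#\mup{T_{n,3}}=1$; for $d=3$ with $n$ even, the value $d=3$ is the smallest integer of the form $n-(2k-1)$ with $1\le k\le\lfloor(n-2)/2\rfloor$, so the analysis of the case $\l_t=2n-4$ applies with $k=(n-2)/2$ and Theorem~\ref{thm:maindodd} yields $\#\mup{T_{n,3}}=1+\#\dist_{(n-2)/2}=1+\#\dist_{(n-d+1)/2}$, matching the entry for $d>2$ odd. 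Putting all these pieces together gives the corollary. The computations involved are routine; the step I expect to require the most care is the bookkeeping at the two ends of the range of $d$ — checking that the counts established separately for $d$ close to $1$ and for $d$ close to $n$ agree with the uniform formulas, and that no value of $d$ slips between the $\l_t=2n-4$ and $\l_t=2n-5$ regimes — which ultimately reduces to the elementary evaluations $\#\dist_1=\#\dist_2=0$, $\#\dist_3=1$, $\#\mathbb{D}_4^{\,\text{odd }}=\#\mathbb{D}_6^{\,\text{odd }}=1$, $\#\mathbb{D}_8^{\,\text{odd }}=2$, together with the fact, implicit in the constructions of Section~\ref{sec:bounds}, that the bound of Eq.~\eqref{eq:bounds} is attained for every $d$ in the appropriate congruence class.
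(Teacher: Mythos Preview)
Your proposal is correct and follows exactly the route the paper intends: the corollary is stated immediately after the sentence ``from Theorem~\ref{thm:maindodd} and Theorem~\ref{thm_main2} we obtain'' with no further argument, so the paper's proof is simply the implicit observation that the main theorem's case split on the parity of $n-d$ translates into a case split on the parities of $n$ and $d$. Your write-up supplies precisely this translation, together with the boundary checks at $d\in\{n-1,n-3,n-5\}$ and $d\in\{n-2,n-4,n-6\}$ via the small-$k$ evaluations of $\#\dist_k$ and $\#\mathbb{D}_{2(k+1)}^{\,\text{odd}}$, and the separate handling of $d\le 3$; this is exactly the bookkeeping the paper leaves to the reader.
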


The two results are illustrated in Fig.~\ref{fig:final}, where we list the number of maximal unrefinable partitions for integers included between two consecutive triangular numbers. Precisely, we start from an even integer $n$ and list the number $\#\mup{T_{n,d}}$ and the corresponding maximum $\l_t$, for each integer in 
$\{s \in \mathbb N \mid T_{n-1} \leq s \leq T_{n+1}\}$. The same combinatorial structure replicates in other intervals between two consecutive triangular numbers, according to the rules of Theorem~\ref{thm:maindodd} and Theorem~\ref{thm_main2}.

\subsection*{Open problems}
The classification of maximal unrefinable partitions has been achieved constructively, by enumerating all the possibilities. It is not clear to the authors if there exists a more concise 
way to prove the result by means of non-constructive arguments. Moreover, it remains an open question whether results of the same nature, i.e., showing that maximal unrefinable partitions are \emph{de facto} partitions into distinct parts, applies also when removing the hypothesis of maximality. To our knowledge, very little is known in this sense regarding unrefinable partitions.

\section*{Acknowledgements}
R. Aragona and R. Civino are members of INdAM-GNSAGA
 (Italy). R. Civino is funded by the Centre of excellence
 ExEMERGE at the University of L'Aquila. The authors gratefully acknowledge financial support from MUR–Italy through PRIN 2022RFAZCJ “Algebraic Methods in Cryptanalysis”, with full funding provided for L.~Campioni.

\bibliographystyle{amsalpha}
\bibliography{sym2n_ref.bib}

  
 \begin{figure}
\[
\begin{tabular}{|c|c| c ||c|c|c|}
\hline\xrowht[()]{8pt}
$N$ & ${\l_t}_{\text{max}}$ & $\#\mup{N}$ & $N$ & ${\l_t}_{\text{max}}$ & $\#\mup{N}$\\
\hline
\hline\xrowht[()]{8pt}
$T_{n-1}$ & $2n-4$ & $\mathbb{D}_{{n}/{2}}$ & $T_n$ & $2n-4$ & $1$\\
\hline\xrowht[()]{8pt}
$T_{n,n-1}$ & $2n-4$ & $1$ & $T_{n+1,n}$ & $2n-4$ & $1$\\
\hline\xrowht[()]{8pt}
$T_{n,n-2}$ & $2n-5$ & $1$ & $T_{n+1,n-1}$ & $2n-5$ & $1$\\
\hline\xrowht[()]{8pt}
$T_{n,n-3}$ & $2n-4$ & $1$ & $T_{n+1,n-2}$ & $2n-4$ & $1$\\
\hline\xrowht[()]{8pt}
$T_{n,n-4}$ & $2n-5$ & $1$ & $T_{n+1,n-3}$ & $2n-5$ & $1$\\
\hline\xrowht[()]{8pt}
$T_{n,n-5}$ & $2n-4$ & $2$ & $T_{n+1,n-4}$ & $2n-4$ & $2$\\
\hline\xrowht[()]{8pt}
$T_{n,n-6}$ & $2n-5$ & $2$ & $T_{n+1,n-5}$ & $2n-5$ & $2$\\
\hline\xrowht[()]{8pt}
\vdots & \vdots & \vdots & \vdots & \vdots & \vdots\\
\hline\xrowht[()]{8pt}
$T_{n,n-\left(2k-1\right)}$ & $2n-4$ & $\mathbb{D}_{k}+1$ & $T_{n+1,n+1-\left(2k-1\right)}$ & $2n-4$ & $\mathbb{D}_{k}+1$\\
\hline\xrowht[()]{8pt}
$T_{n,n-\left(2k\right)}$ & $2n-5$ & $\mathbb{D}_{2k+2}^{\,\text{odd}}$ & $T_{n+1,n+1-\left(2k\right)}$ & $2n-5$ & $\mathbb{D}_{2k+2}^{\,\text{odd}}$\\
\hline\xrowht[()]{8pt}
\vdots & \vdots & \vdots & \vdots & \vdots & \vdots\\
\hline\xrowht[()]{8pt}
$T_{n,4}$ & $2n-5$ & $\mathbb{D}_{n-2}^{\,\text{odd}}$ & $T_{n+1,5}$ & $2n-5$ & $\mathbb{D}_{n-2}^{\,\text{odd}}$\\
\hline\xrowht[()]{8pt}
$T_{n,3}$ & $2n-4$ & $\mathbb{D}_{{(n-2)}/{2}}$ & $T_{n+1,4}$ & $2n-4$ & $\mathbb{D}_{{(n-2)}/{2}}$\\
\hline\xrowht[()]{8pt}
$T_{n,2}$ & $2n-3$ & $1$ & $T_{n+1,3}$ & $2n-4$ & $1$\\
\hline\xrowht[()]{8pt}
$T_{n,1}$ & $2n-2$ & $1$ & $T_{n+1,2}$ & $2n-3$ & $1$\\
\hline\xrowht[()]{8pt}
$T_n$ & $2n-4$ & $1$ & $T_{n+1,1}$ & $2n-2$ & $1$\\
\hline\xrowht[()]{8pt}
$\star$&$\star$&$\star$&  $T_{n+1}$ & $2n-4$ & $\mathbb{D}_{\,1+{n}/{2}}$ \\
\hline
\end{tabular}
\]
\caption{The number of maximal unrefinable partitions between two consecutive triangular numbers. Here $n$ is an even number.}
\label{fig:final}
\end{figure}
 
\begin{landscape}

\begin{table}[h!]
\begin{tabular}{|c|c|c|}
    \hline
     $d$ & $\left(a_1,\dots,a_h\right)$ & $\left(\al_1,\dots,\al_j\right)$\\
\hline
     \hline
     $3$ & $\left(n-1,n\right)$ & $\left(d,2n-4\right)$\\
     \hline
     $n-1$ & $\left(n-3,n-2,n\right)$ & $\left(d,2n-4\right)$\\
     \hline
     $n-3$ & $\left(n-4,n-2,n-1\right)$ & $\left(d,2n-4\right)$\\
     \hline
     $n-5$ & $\left(n-4,n-3,n-2\right)$ & $\left(d,2n-4\right)$\\
     \hline
     $n-5$ & $\left(n-2,n-1,n\right)$ & $\left(n+1,2n-4\right)$\\
     \hline
     $3<n-\left(2k-1\right)\leq n-7$ & $\left(\frac{n-5+d}{2},n-2,n-1,n\right)$ & $\left(d,\frac{3n-3-d}{2},2n-4\right)$\\
     \hline
     $3\leq n-\left(2k-1\right)\leq n-7$ & $\left(\frac{n-3+d}{2},n-3,n-2,n\right)$ & $\left(d,\frac{3n-5-d}{2},2n-4\right)$\\
     \hline
     $3\leq n-\left(2k-1\right)\leq n-9$ & $\left(\frac{n-1+d}{2},n-4,n-2,n-1\right)$ & $\left(d,\frac{3n-7-d}{2},2n-4\right)$\\
     \hline
     $3\leq n-\left(2k-1\right)\leq n-11$ & $\left(\frac{n+1+d}{2},n-4,n-3,n-2\right)$ & $\left(d,\frac{3n-9-d}{2},2n-4\right)$\\
     \hline
     
     $3\leq n-\left(2k-1\right)\leq n-\left(h^2-h-7\right)$ & $\Big{(}\frac{n+\left(h^2-3h-9\right)+d}{2}+i,n-h-i_1,\dots$ & $(d,n+1+i_{h-4},\dots,n-4+h+i_1,$\\
     	for $h\geq5$ & $n-5-i_{h-4},n-2,n-1,n)$ & $\frac{3n-\left(h^2-3h-1\right)-d}{2}-i,2n-4\Big{)}$\\
     \hline
     
     $3\leq n-\left(2k-1\right)\leq n-\left(h^2-h-5\right)$ & $\Big{(}\frac{n+\left(h^2-3h-7\right)+d}{2}+i,n-h-i_1,\dots$ & $(d,n+1+i_{h-4},\dots,n-4+h+i_1,$\\
    for  $h\geq5$ & $\dots,n-5-i_{h-4},n-3,n-2,n)$ & $\frac{3n-\left(h^2-3h+1\right)-d}{2},2n-4\Big{)}$\\
     \hline
     
     $3\leq n-\left(2k-1\right)\leq n-\left(h^2-h-3\right)$ & $\Big{(}\frac{n+\left(h^2-3h-5\right)+d}{2}+i,n-h-i_1,$ & $(d,n+1+i_{h-4},\dots,n-4+h+i_1,$\\
     for $h\geq5$ & $\dots,n-5-i_{h-4},n-4,n-2,n-1)$ & $\frac{3n-\left(h^2-3h+3\right)-d}{2},2n-4\Big{)}$\\
     \hline
     
     $3\leq n-\left(2k-1\right)\leq n-\left(h^2-h-1\right)$ & $\Big{(}\frac{n+\left(h^2-3h-3\right)+d}{2}+i,n-h-i_1,$ & $(d,n+1+i_{h-4},\dots,n-4+h+i_1,$\\
     for $h\geq5$ & $\dots,n-5-i_{h-4},n-4,n-3,n-2)$ & $\frac{3n-\left(h^2-3h+5\right)-d}{2},2n-4\Big{)}$\\
     \hline
\end{tabular}
\medskip

\caption{List of all the possible maximal constructions when $\l_t=2n-4$.}
\label{table1}
\end{table}

\end{landscape}

\begin{landscape}
\begin{table}[]

\begin{tabular}{|c|c|c|}
    \hline
     $d$ & $\left(a_1,\dots,a_h\right)$ & $\left(\al_1,\dots,\al_j\right)$\\
     \hline
     \hline
     $4$ & $\left(n-1,n\right)$ & $\left(d,2n-5\right)$\\
     \hline
     $n-2$ & $\left(n-4,n-3,n\right)$ & $\left(d,2n-5\right)$\\
     \hline
     $n-4$ & $\left(n-5,n-3,n-1\right)$ & $\left(d,2n-5\right)$\\
     \hline
     $n-6$ & $\left(n-5,n-4,n-2\right)$ & $\left(d,2n-5\right)$\\
     \hline
     $n-6$ & $\left(n-3,n-1,n\right)$ & $\left(n+1,2n-5\right)$\\
     \hline
     $4<n-2k\leq n-8$ & $\left(\frac{n-6+d}{2},n-3,n-1,n\right)$ & $\left(d,\frac{3n-4-d}{2},2n-5\right)$\\
     \hline
     $4\leq n-2k\leq n-8$ & $\left(\frac{n-4+d}{2},n-4,n-2,n\right)$ & $\left(d,\frac{3n-6-d}{2},2n-5\right)$\\
     \hline
     $4\leq n-2k\leq n-10$ & $\left(\frac{n-2+d}{2},n-5,n-2,n-1\right)$ & $\left(d,\frac{3n-8-d}{2},2n-5\right)$\\
     \hline
     $4\leq n-2k\leq n-14$ & $\left(\frac{n+2+d}{2},n-5,n-4,n-3\right)$ & $\left(d,\frac{3n-12-d}{2},2n-5\right)$\\
     \hline
     
     $4\leq n-2k\leq n-\left(h^2-11\right)$ & $\Big{(}\frac{n+\left(h^2-2h-15\right)+d}{2}+i,n-h-1-i_1,\dots$ & $(d,n+1+i_{h-4},\dots,n-4+h+i_1,$\\
     for $h\geq5,h\text{ odd }$ & $n-6-i_{h-4},n-2,n-1,n)$ & $\frac{3n-\left(h^2-2h-5\right)-d}{2}-i,2n-5\Big{)}$\\
     \hline
     
     $4\leq n-2k\leq n-\left(h^2-10\right)$ & $\Big{(}\frac{n+\left(h^2-2h-14\right)+d}{2}+i,n-h-1-i_1,\dots$ & $(d,n+1+i_{h-4},\dots,n-4+h+i_1,$\\
     for $h\geq5,h \text{ even }$ & $n-6-i_{h-4},n-3,n-1,n)$ & $\frac{3n-\left(h^2-2h-4\right)-d}{2}-i,2n-5\Big{)}$\\
     \hline

     $4\leq n-2k\leq n-\left(h^2-8\right)$ & $\Big{(}\frac{n+\left(h^2-2h-12\right)+d}{2}+i,n-h-1-i_1,\dots$ & $(d,n+1+i_{h-4},\dots,n-4+h+i_1,$\\
   for  $h\geq5,h \text{ even }$ & $n-6-i_{h-4},n-4,n-2,n)$ & $\frac{3n-\left(h^2-2h-2\right)-d}{2}-i,2n-5\Big{)}$\\
     \hline
     
     $4\leq n-2k\leq n-\left(h^2-7\right)$ & $\Big{(}\frac{n+\left(h^2-2h-11\right)+d}{2}+i,n-h-1-i_1,\dots$ & $(d,n+1+i_{h-4},\dots,n-4+h+i_1,$\\
     for $h\geq5,h \text{ odd }$ & $n-6-i_{h-4},n-4,n-3,n)$ & $\frac{3n-\left(h^2-2h-1\right)-d}{2}-i,2n-5\Big{)}$\\
     \hline

     $4\leq n-2k\leq n-\left(h^2-6\right)$ & $\Big{(}\frac{n+\left(h^2-2h-10\right)+d}{2}+i,n-h-1-i_1,\dots$ & $(d,n+1+i_{h-4},\dots,n-4+h+i_1,$\\
     for $h\geq5,h \text{ even }$ & $n-6-i_{h-4},n-5,n-2,n-1)$ & $\frac{3n-\left(h^2-2h\right)-d}{2}-i,2n-5\Big{)}$\\
     \hline
     
     $4\leq n-2k\leq n-\left(h^2-5\right)$ & $\Big{(}\frac{n+\left(h^2-2h-9\right)+d}{2}+i,n-h-1-i_1,\dots$ & $(d,n+1+i_{h-4},\dots,n-4+h+i_1,$\\
     for $h\geq5,h \text{ odd }$ & $n-6-i_{h-4},n-5,n-3,n-1)$ & $\frac{3n-\left(h^2-2h+1\right)-d}{2}-i,2n-5\Big{)}$\\
     \hline
     
     $4\leq n-2k\leq n-\left(h^2-3\right)$ & $\Big{(}\frac{n+\left(h^2-2h-7\right)+d}{2}+i,n-h-1-i_1,\dots$ & $(d,n+1+i_{h-4},\dots,n-4+h+i_1,$\\
    for  $h\geq5,h \text{ odd }$ & $n-6-i_{h-4},n-5,n-4,n-2)$ & $\frac{3n-\left(h^2-2h+3\right)-d}{2}-i,2n-5\Big{)}$\\
     \hline
     
     $4\leq n-2k\leq n-\left(h^2-2\right)$ & $\Big{(}\frac{n+\left(h^2-2h-6\right)+d}{2}+i,n-h-1-i_1,\dots$ & $(d,n+1+i_{h-4},\dots,n-4+h+i_1,$\\
     for $h\geq5,h \text{ even }$ & $n-6-i_{h-4},n-5,n-4,n-3)$ & $\frac{3n-\left(h^2-2h+4\right)-d}{2}-i,2n-5\Big{)}$\\
     \hline
     
\end{tabular}

\medskip

\caption{List of all the possible maximal constructions when $\l_t=2n-5$.}
\label{table2}
\end{table}
\end{landscape}

\end{document}